\newtheorem{theorem}{Theorem}[section]
\newtheorem*{theorem*}{Theorem}
\newtheorem{corollary}[theorem]{Corollary}
\newtheorem{proposition}[theorem]{Proposition}
\newtheorem{lemma}[theorem]{Lemma}
\theoremstyle{definition}
\newtheorem{definition}[theorem]{Definition}
\newtheorem{remark}[theorem]{Remark}
\newtheorem{question}[theorem]{Question}
\newtheorem*{Theorem1}{Theorem \ref{T:main1}}
\newtheorem*{Theorem2}{Theorem \ref{T:evenlower}}
\newtheorem*{Theorem3}{Theorem \ref{T:main2}}
\newtheorem*{Theorem4}{Theorem \ref{T:noqlogq}}
\newtheorem*{Theorem5}{Theorem \ref{T:superlinear}}
\newcommand{\len}[1]{\| #1 \|}
\DeclarePairedDelimiter{\ceil}{\lceil}{\rceil}
\begin{document}

\dtitle[Mixing Subshifts with Low Complexity]{Measure-Theoretically Mixing Subshifts with Low Complexity}
\dauthorone[D.~Creutz]{Darren Creutz}{creutz@usna.edu}{US Naval Academy}{}
\dauthortwo[R.~Pavlov]{Ronnie Pavlov}{rpavlov@du.edu}{University of Denver}{The second author gratefully acknowledges the support of a Simons Foundation Collaboration Grant.}
\dauthorthree[S.~Rodock]{Shaun Rodock}{shaunfrodock@gmail.com}{US Navy}{}
\datewritten{\today}

\keywords{Symbolic dynamics, word complexity, strong mixing, rank-one transformations}
\subjclass{Primary: 37B10; Secondary 37A25}

\dabstract{%
We introduce a class of rank-one transformations, which we call extremely elevated staircase transformations.  We prove that they are measure-theoretically mixing and, for any $f : \mathbb{N} \to \mathbb{N}$ with $f(n)/n$ increasing and $\sum 1/f(n) < \infty$, that there exists an extremely elevated staircase with word complexity $p(n) = o(f(n))$.  This improves the previously lowest known complexity for mixing subshifts, resolving a conjecture of Ferenczi.
}

\makepreprint

\section{Introduction}

It is well-known that there exist dynamical systems in which two seemingly opposite properties can coexist: zero entropy, which implies that a system is in a sense `simple' or `deterministic,' and (measure-theoretic) strong mixing, which implies that sets become `asymptotically independent' under repeated application (the first construction of such a system is due to Girsanov \cite{girsanov}, see also \cite{rokhlin67} and \cite{pinsker60}). For the symbolically defined dynamical systems known as subshifts, the concept of word complexity provides further quantification within zero entropy; zero entropy means that word complexity function $p(n)$ grows subexponentially, but of course one can study slower growth rates as well. Many recent results treat subshifts with very low complexity (see, among others, \cite{CK1}, \cite{CK2}, \cite{CK3}, \cite{DDMP}, \cite{DOP}, and \cite{PS}), showing that they must be `simple' in various ways. In contrast, our results show that such subshifts can still be `complex' in the sense of having a strong mixing measure.

Using this framework, in \cite{ferenczi1996rank} Ferenczi described a subshift example supporting a strongly mixing invariant measure whose word complexity satisfies $\frac{p(q)}{q^2} \rightarrow 0.5$. He somewhat glibly conjectured that this was the minimal possible word complexity for such a shift, but also said that he would `wait confidently for the next counterexample.' 
Ferenczi also showed that such a subshift must have $\limsup \frac{p(q)}{q} = \infty$, i.e.~its word complexity function cannot be bounded from above by any linear function.

Ferenczi's example was the symbolic model of a so-called rank-one system. Rank-one systems are traditionally defined by a cutting and stacking procedure on an interval with Lebesgue measure, but they are measure-theoretically isomorphic to the empirical measure on a recursively defined subshift (see \cite{danilenko16}, \cite{adamsferenczipeterson17}). The rank-one examples from \cite{ferenczi1996rank} are well-studied examples called staircase transformations, originally defined by Smorodinsky and Adams, and which were proved to be measure-theoretically mixing in \cite{adams1998smorodinsky}, \cite{CreutzSilva2004} and \cite{CreutzSilva2010}. 

Somewhat surprisingly, we show that a fairly simple alteration of the traditional staircase yields rank-one systems, which we call \textbf{extremely elevated staircase transformations}, which have word complexity much lower than quadratic (though unavoidably superlinear) and whose symbolic models are measure-theoretically mixing. We prove several results about how slowly complexity can grow for such examples.

We first show that the complexity $p(q)$ can grow more slowly than any sequence whose sum of reciprocals converges.

\newcommand{\theoremonetext}{%
Let $f : \mathbb{N} \to \mathbb{N}$ be a function such that $\frac{f(q)}{q}$ is nondecreasing and $\sum \limits \frac{1}{f(q)} < \infty$.
	Then there exists a (mixing) extremely elevated staircase transformation where $\lim \frac{p(q)}{f(q)} = 0$.%
}

\begin{Theorem1}
\theoremonetext
\end{Theorem1}

This is not, however, a necessary restriction on word complexity, as we can construct some examples with even slower growth.

\newcommand{\theoremtwotext}{%
There exists a (mixing) extremely elevated staircase transformation where $\sum \limits \frac{1}{p(q)} = \infty$.%
}

\begin{Theorem2}
\theoremtwotext
\end{Theorem2}

We also prove that there exist such mixing subshifts with even lower complexity along sequences.

\newcommand{\theoremthreetext}{%
For every $\epsilon > 0$, there exists a (mixing) extremely elevated staircase transformation where 
$\liminf \frac{p(q)}{q (\log q)^\epsilon} = 0$.%
}

\begin{Theorem3}
\theoremthreetext
\end{Theorem3}

However, we then show that there is a superlinear lower bound of $q \log(q)$ for the complexity function.

\newcommand{\theoremfourtext}{%
For every extremely elevated staircase transformation,
$\limsup \frac{p(q)}{q \log q} = \infty$.%
}

\begin{Theorem4}
\theoremfourtext
\end{Theorem4}

Finally, we show that extremely elevated staircase cannot achieve linear complexity even along a sequence.

\newcommand{\theoremfivetext}{%
For every extremely elevated staircase transformation, $\lim \frac{p(q)}{q} = \infty$.%
}

\begin{Theorem5}
\theoremfivetext
\end{Theorem5}

In the spirit of Ferenczi's `waiting confidently for the next counterexample,' we also wonder whether there are other classes of subshifts supporting mixing measures which can achieve even lower complexity.

\begin{question}\label{liminf2}
Is there any nontrivial lower bound on complexity growth for all subshifts with a mixing measure, i.e., does there exist $f > 1$ so that $\liminf \frac{p(q)}{qf(q)} > 1$ for all such subshifts?
\end{question}

\begin{question}
Is there a superlinear lower bound on complexity growth along a sequence for all subshifts with a mixing measure, i.e., does there exist unbounded $g$ so that $\limsup \frac{p(q)}{qg(q)} = \infty$ for all such subshifts?
\end{question}

We note that in Question~\ref{liminf2}, we chose phrasing to admit the possibility that there exist such examples which have linear complexity along a subsequence, as this was not ruled out by Ferenczi's results and we do not know whether it is possible.

\section{Definitions and preliminaries}

\subsection{General symbolic dynamics and ergodic theory}

We begin with some general definitions in ergodic theory. 

\begin{definition}
A \textbf{measure-theoretic dynamical system} or \textbf{MDS} is a quadruple $(X, \mathcal{B}, \mu, T)$, where $(X, \mathcal{B}, \mu)$ is a standard Borel or Lebesgue measure space and $T: X \rightarrow X$ is an invertible measure-preserving map, i.e.~$\mu(T^{-1} A) = \mu(A)$ for all $A \in \mathcal{B}$.
\end{definition}

\begin{definition}
An MDS $(X, \mathcal{B}, \mu, T)$ is \textbf{ergodic} if $A = T^{-1} A$ implies that $\mu(A) = 0$ or $\mu(A^c) = 0$. 
\end{definition}

A crucial usage of ergodicity is the mean ergodic theorem:
\begin{theorem}
If $(X, \mathcal{B}, \mu, T)$ is ergodic, then for any $f \in L^2(X)$ with $\int f \ d\mu = 0$, 
\[
\lim_{n \rightarrow \infty} \int \big{|}\frac{1}{n} \sum_{i = 0}^{n-1} f \circ T^{-i} \big{|}^{2} \ d\mu = 0.
\]
\end{theorem}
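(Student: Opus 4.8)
The plan is to prove this by the standard Hilbert-space argument for von Neumann's mean ergodic theorem, carried out in $L^2(X,\mu)$. Since $T$ is invertible and measure-preserving, the Koopman operator $Uf = f\circ T^{-1}$ is a unitary operator on $L^2(X,\mu)$, and the quantity to be driven to zero is exactly $\|A_n f\|_2^2$, where $A_n f = \frac1n\sum_{i=0}^{n-1} U^i f$. So it suffices to show that $A_n f \to 0$ in $L^2$-norm whenever $\int f\,d\mu = 0$.

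First I would establish the orthogonal decomposition $L^2(X,\mu) = \ker(U-I)\oplus\overline{\operatorname{ran}(U-I)}$. The crucial point is that because $U$ is unitary one has $\|Ug-g\| = \|U^*g-g\|$ for every $g$: expanding both squared norms and using $\|Ug\|=\|g\|$ together with $\langle U^*g,g\rangle = \overline{\langle Ug,g\rangle}$ shows both equal $2\|g\|^2 - 2\operatorname{Re}\langle Ug,g\rangle$. Hence $\ker(U-I) = \ker(U^*-I) = \operatorname{ran}(U-I)^\perp$, and taking orthogonal complements gives the decomposition.

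Next I would verify convergence of the averages on each summand. On $\ker(U-I)$ we have $A_n f = f$ for all $n$. On $\operatorname{ran}(U-I)$, writing $f = Ug - g$ makes the sum telescope, so $A_n f = \frac1n(U^n g - g)$, whose norm is at most $2\|g\|/n \to 0$. Since each $A_n$ is a contraction ($\|A_n\|\le 1$), a routine $\varepsilon/3$ approximation argument extends the conclusion $A_n f \to 0$ from $\operatorname{ran}(U-I)$ to its closure. Combining the two pieces shows $A_n f \to Pf$ in $L^2$, where $P$ is the orthogonal projection onto $\ker(U-I)$.

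Finally I would invoke ergodicity. An element $f\in\ker(U-I)$ satisfies $f\circ T^{-1}=f$ $\mu$-a.e., hence is $T$-invariant, and ergodicity forces every such $f$ to be constant $\mu$-a.e.\ (its super-level sets are invariant mod null, so each has measure $0$ or $1$). Therefore $Pf$ is the constant function equal to $\int f\,d\mu$, which vanishes under the hypothesis, and the limit is $0$ as claimed. The only mildly delicate points are the unitarity identity used to identify $\ker(U-I)^\perp$ and the density argument passing to the closure of $\operatorname{ran}(U-I)$; the rest is bookkeeping.
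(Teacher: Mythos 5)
Your proof is correct: it is the standard von Neumann mean ergodic theorem argument (unitarity of the Koopman operator, the decomposition $L^2=\ker(U-I)\oplus\overline{\operatorname{ran}(U-I)}$ via $\|Ug-g\|=\|U^*g-g\|$, telescoping on the range, density plus the contraction bound $\|A_n\|\le 1$, and ergodicity to kill the projection onto the invariant functions). The paper offers no proof to compare against --- it states this as the classical mean ergodic theorem and simply uses it --- so there is nothing you have done differently; you have supplied the standard proof of a quoted result. The only point worth a remark is that the paper's definition of ergodicity concerns strictly invariant sets $A=T^{-1}A$, while your super-level sets are only invariant mod null; bridging this is routine for an invertible measure-preserving map (replace such a set by, say, $\bigcap_N\bigcup_{n\ge N}T^{-n}A$, which is strictly invariant and differs from it by a null set), but it deserves a sentence if one is being fully careful.
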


\begin{definition}
An MDS $(X, \mathcal{B}, \mu, T)$ is \textbf{strongly mixing} if for all $A, B \in \mathcal{B}$, $\mu(A \cap T^{-n} B) \rightarrow \mu(A) \mu(B)$.
\end{definition}

\begin{definition}
An MDS $(X, \mathcal{B}, \mu, T)$ and an MDS $(X', \mathcal{B}', \mu', T')$ are \textbf{measure-theoretically isomorphic} if there exists
a bijective map $\phi$ between full measure subsets $X_0 \subset X$ and $X'_0 \subset X'$ where 
$\mu(\phi^{-1} A) = \mu'(A)$ for all measurable $A \subset X'_0$ and $(\phi \circ T)x = (T' \circ \phi)x$ for all $x \in X_0$.
\end{definition}

Most of the systems we study in this work will be symbolically defined systems called subshifts.

\begin{definition}
A \textbf{subshift} on the finite set $\mathcal{A}$ is any subset $X \subset \mathcal{A}^{\mathbb{Z}}$ which is closed in the product topology and shift-invariant, i.e.~for all $x = (x(n))_{n \in \mathbb{Z}} \in X$ and $k \in \mathbb{Z}$, the translation $(x(n+k))_{n \in \mathbb{Z}}$ of $x$ by $k$ is also in $X$.
\end{definition} 

\begin{definition}
A \textbf{word} on the finite set $\mathcal{A}$ is any element of $\mathcal{A}^n$ for some $n$, which is called the \textbf{length} of $w$ which we denote $\len{w}$. A word $w$ of length $\ell$ is said to be a \textbf{subword} of a word or biinfinite sequence $x$ if there exists $k$ so that $w(i) = x(i+k)$ for all $1 \leq i \leq \ell$. When $x$ is a word, say with length $m$, we say that $w$ is a \textbf{prefix} of $x$ if it occurs at the beginning of $x$ (i.e.~$k = 0$ in the above) and a \textbf{suffix} of $x$ if it occurs at the end of $x$ (i.e.~$k = m - \ell$ in the above). 
\end{definition}

For words $v,w$, we denote by $vw$ their concatenation, i.e.~the word obtained by following $v$ immediately by $w$. We use similar notation for concatenations of multiple words, e.g., $w_1 w_2 \ldots w_n$. When it is notationally convenient, we may sometimes refer to such a concatenation with product or exponential notation, e.g., $\prod_i w_i$ or $0^n$.

\begin{definition}
The \textbf{language} of a subshift $X$, denoted $\mathcal{L}(X)$, is the set of all words $w$ which are subwords of some $x \in X$. 
\end{definition}

\begin{definition}
The \textbf{word complexity function} of a subshift $X$ over $\mathcal{A}$ is the function $p_X: \mathbb{N} \rightarrow \mathbb{N}$ defined by $p_X(n) = |\mathcal{L}(X) \cap \mathcal{A}^n|$, the number of words of length $n$ in the language of $X$.
\end{definition}

When $X$ is clear from context, we suppress the subscript and just write $p(n)$.

\begin{definition}
A word $w$ is \textbf{right-special} in a subshift $X$ over $\{0,1\}$ if $w0, w1 \in \mathcal{L}(X)$.
\end{definition}

We note that this property is often called \textbf{right special} in the literature.
All subshifts we examine are on the alphabet $\{0,1\}$, and in this setting we will repeatedly make use of the following basic lemma due to Cassaigne \cite{cassaigne}.

\begin{lemma}
For any subshift $X$ over $\{0,1\}$, if we denote by $\mathcal{L}^{RS}_\ell(X)$ the set of right-special words in $X$ of length $\ell$, then for all positive $m < n$,
\[
p(n) = p(m) + \sum_{\ell = m}^{n-1} |\mathcal{L}^{RS}_\ell(X)|.
\]
\end{lemma}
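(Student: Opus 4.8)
The plan is to first prove the one-step identity $p(\ell+1) = p(\ell) + |\mathcal{L}^{RS}_\ell(X)|$ for every $\ell \ge 1$ and then telescope. The central object is the map $\pi \colon \mathcal{L}(X) \cap \{0,1\}^{\ell+1} \to \mathcal{L}(X) \cap \{0,1\}^{\ell}$ that deletes the last letter, i.e.\ sends a word to its length-$\ell$ prefix; this is well defined because a prefix of a word in $\mathcal{L}(X)$ is itself a subword of some $x \in X$ and hence again in $\mathcal{L}(X)$.

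First I would verify that $\pi$ is surjective. If $w \in \mathcal{L}(X) \cap \{0,1\}^{\ell}$, then by definition $w$ is a subword of some $x \in X$, say $w(i) = x(i+k)$ for $1 \le i \le \ell$; since $x$ is biinfinite, the letter $x(k+\ell+1)$ exists, so $w\,x(k+\ell+1)$ is a length-$(\ell+1)$ subword of $x$, hence an element of $\mathcal{L}(X)$ mapping to $w$ under $\pi$. This is the only place where the hypothesis that $X$ is a subshift over a finite alphabet enters, and it is routine once the definitions are unwound — so I do not expect any genuine obstacle in this lemma; the surjectivity check is the only point that is not pure bookkeeping.

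Next I would count the fibers of $\pi$. For $w \in \mathcal{L}(X) \cap \{0,1\}^{\ell}$, the fiber $\pi^{-1}(w)$ equals $\{\,wa : a \in \{0,1\},\ wa \in \mathcal{L}(X)\,\}$, which is nonempty by surjectivity, hence has cardinality $1$ or $2$; and it has cardinality $2$ exactly when both $w0$ and $w1$ lie in $\mathcal{L}(X)$, i.e.\ exactly when $w \in \mathcal{L}^{RS}_\ell(X)$. Since the fibers of $\pi$ partition $\mathcal{L}(X) \cap \{0,1\}^{\ell+1}$, summing their cardinalities over all length-$\ell$ words in the language gives
\[
p(\ell+1) \;=\; \sum_{w \in \mathcal{L}(X) \cap \{0,1\}^{\ell}} |\pi^{-1}(w)| \;=\; \sum_{w} \big(1 + \mathbf{1}[\,w \in \mathcal{L}^{RS}_\ell(X)\,]\big) \;=\; p(\ell) + |\mathcal{L}^{RS}_\ell(X)|.
\]

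Finally, telescoping this identity over $\ell = m, m+1, \dots, n-1$ yields
\[
p(n) - p(m) \;=\; \sum_{\ell = m}^{n-1} \big( p(\ell+1) - p(\ell) \big) \;=\; \sum_{\ell = m}^{n-1} |\mathcal{L}^{RS}_\ell(X)|,
\]
which is precisely the claimed formula. As noted, the only substantive point is the surjectivity of $\pi$ (equivalently, that every word of the language extends to the right by at least one letter); the rest is elementary counting and a telescoping sum.
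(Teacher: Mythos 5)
Your proof is correct. The paper does not prove this lemma at all---it is quoted as a known result of Cassaigne---and your argument (surjectivity of the prefix-deletion map from words of length $\ell+1$ onto words of length $\ell$, fibers of size $1$ or $2$ with size $2$ exactly on right-special words, then telescoping) is the standard proof, with the one non-bookkeeping point, right-extendability of every word in $\mathcal{L}(X)$, correctly justified from the biinfinite-sequence definition of the language.
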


The classical Hedlund-Morse theorem (\cite{morse1938symbolic}) states that every infinite subshift $X$ has at least one right-special word for each length, and so every such subshift satisfies $p(n) > n$ for all $n$.

\subsection{Rank-one transformations and their symbolic models}

A \textbf{rank-one transformation} is an MDS $(X, \mathcal{B}(X), m, T)$ (from now on referred to just as $(X,T)$) constructed by a so-called cutting and stacking construction; here $X$ represents a (possibly infinite) interval, $\mathcal{B}(X)$ is the induced Borel $\sigma$-algebra from $\mathbb{R}$, and $m$ is Lebesgue measure. We give only a brief introduction here, and refer the reader to 
\cite{fghsw21} or  \cite{silva2008invitation} for a more detailed presentation.

The transformation $T$ is defined inductively on larger and larger portions of the space by the use of Rokhlin towers or \textbf{columns}, denoted $C_n$. Each column $C_n$ consists of \textbf{levels} $I_{n,a}$ where $0 \leq a < h_{n}$ is the height of the level within the column. All levels $I_{n,a}$ in $C_n$ are 
intervals with the same length, and the total number of levels in a column is the \textbf{height} of the column, denoted by $h_n$. The transformation $T$ is defined on all levels $I_{n,a}$ except the top one $I_{n, h_n - 1}$ by sending each $I_{n,a}$ to $I_{n,a+1}$ using the unique affine map between them.

We start with $C_1=[0,1)$ with height $h_1=1$. To obtain $C_{n+1}$ from $C_n$, we require a \textbf{cut sequence}, $\{r_n\}$ such that $r_n \geq 1 \: \forall n$. For each $n$, we make $r_n$ vertical cuts of $C_n$ to create $r_n+1$ \textbf{subcolumns} of equal width. We denote a \textbf{sublevel} of $C_n$ by $I_{n,a}^{[i]}$ where $0 \leq a < h_{n}$ is the height of the level within that column, and 
$i$ represents the position of the subcolumn, where $i=0$ represents the leftmost subcolumn and $i=r_n$ is the rightmost subcolumn. After cutting $C_n$ into subcolumns, we add extra intervals called \textbf{spacers} on top of each subcolumn to function as levels of the next column. The \textbf{spacer sequence}, $\{s_{n,i}\}$, specifies how many sublevels to add above each subcolumn where $n$ represents the column we are working with, $i$ represents the subcolumn that spacers are added above, and $s_{n,i} \geq 0$ for $0 \leq i\leq r_n$. Spacers are the same width as the sublevels, act as new levels in the column $C_{n+1}$, and are always taken to be the leftmost intervals in $\mathbb{R}$ not currently part of a level.
Once the spacers are added on top of the subcolumns, we stack the subcolumns with their spacers right on top of left. This gives us the next column, $C_{n+1}$.

Each column $C_n$ yields a definition of $T$ on $\bigcup_{a = 0}^{h_n - 2} I_{n,a}$; it is routine to check that 
the partially defined map $T$ on $C_{n+1}$ agrees with that of $C_n$, extending the definition of $T$ to a portion of the top level of $C_n$, where it was previously undefined. Continuing this process gives the \textbf{sequence of columns} $\{C_1, \dots, C_n, C_{n+1}, \dots\}$ and $T$ is then the limit of the partially defined maps. 

Though in theory this construction could result in $X$ being an infinite interval with infinite Lebesgue measure, it is known that $X$ has finite measure if and only if 
$\sum_{n} \frac{1}{r_{n}h_{n}}\sum_{i=0}^{r_{n}} s_{n,i} < \infty$ (see e.g.~\cite{CreutzSilva2010}).
All rank-one transformations we define will satisfy this condition, and for convenience we always renormalize so that $X = [0,1)$. Since $X$ is always $[0,1)$ equipped with the Lebesgue measure, we hereafter refer to the MDS by just the map $T$. Every rank-one transformation $T$ is an invertible and ergodic MDS.

\begin{remark}
The reader should be aware that we are making $r_{n}$ cuts and obtaining $r_{n}+1$ subcolumns (following Ferenczi \cite{ferenczi1996rank}), while other papers (e.g.~\cite{Creutz2021}) use $r_{n}$ as the number of subcolumns.
\end{remark}

We will later need the following general bounds for rank-one transformations.

\begin{proposition}\label{P:hr}
	Let $\{ r_{n} \}$ and $\{ h_{n} \}$ be the cut and height sequences for a rank-one transformation on a probability space with initial base level $C_{1}$.
	Then
	\[
	\prod_{j=1}^{n-1} (r_{j}+1) \leq h_{n} \leq \frac{1}{\mu(C_{1})}\prod_{j=1}^{n-1} (r_{j}+1) \quad\quad\quad\quad\text{and}\quad\quad\quad\quad \frac{1}{h_{n}}\prod \limits_{j=1}^{n-1} (r_{j}+1) \to \mu(C_{1}).
	\]
\end{proposition}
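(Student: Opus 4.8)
The plan is to track, through the cutting-and-stacking recursion, the height $h_n$ of the column $C_n$ and the common width $w_n$ of its levels. The key observation is that passing from $C_n$ to $C_{n+1}$ cuts $C_n$ into $r_n+1$ subcolumns, each consisting of levels of width $w_n/(r_n+1)$, places $s_{n,i}\ge 0$ spacers of that same width on top of the $i$-th subcolumn, and stacks the resulting towers. Reading off the heights and widths gives
\[
h_{n+1}=(r_n+1)h_n+\sum_{i=0}^{r_n}s_{n,i}\qquad\text{and}\qquad w_{n+1}=\frac{w_n}{r_n+1},
\]
with $h_1=1$ and $w_1=\mu(C_1)$.

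From here the two inequalities are quick. Since every $s_{n,i}\ge 0$, the height recursion gives $h_{n+1}\ge (r_n+1)h_n$, and a straightforward induction (using the empty-product convention at $n=1$) yields $h_n\ge\prod_{j=1}^{n-1}(r_j+1)$. For the upper bound, iterating the width recursion gives $w_n=\mu(C_1)\big/\prod_{j=1}^{n-1}(r_j+1)$; since the $h_n$ levels of $C_n$ are pairwise disjoint subsets of the probability space $X$, we get $\mu(C_n)=h_nw_n\le 1$, which is exactly the claimed bound $h_n\le\frac{1}{\mu(C_1)}\prod_{j=1}^{n-1}(r_j+1)$.

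For the limit, the identity $\mu(C_n)=h_nw_n$ rearranges to $\frac{1}{h_n}\prod_{j=1}^{n-1}(r_j+1)=\frac{\mu(C_1)}{\mu(C_n)}$, so it remains only to show $\mu(C_n)\to 1$. Identifying a column with the union of its levels as a subset of $X$, we have $C_n\subseteq C_{n+1}$ for all $n$ because each level of $C_n$ persists as a level of $C_{n+1}$, and $X=\bigcup_n C_n$ because every point of $X$ belongs to a level created at some finite stage, which thereafter remains a level. Continuity of measure from below then gives $\mu(C_n)\uparrow\mu(X)=1$, and the limit follows. The one step that is not purely mechanical is this last one — verifying that the original levels together with all spacers exhaust $X$ — but this is built into the cutting-and-stacking definition, and the finiteness hypothesis $\sum_n\frac{1}{r_nh_n}\sum_{i=0}^{r_n}s_{n,i}<\infty$ is precisely what guarantees $\mu(X)$ is finite (hence, after the renormalization fixed in the text, equal to $1$). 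I therefore expect no real obstacle beyond carefully stating the height and width recursions and this exhaustion property.
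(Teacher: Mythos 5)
Your proof is correct and follows essentially the same route as the paper: both arguments reduce to the exact identity $\prod_{j=1}^{n-1}(r_j+1)=h_n\,\mu(C_1)/\mu(C_n)$ (you phrase it via the level width $w_n$, the paper via the average spacer count $s_n$), and then use $\mu(C_1)\le\mu(C_n)\le 1$ together with $\mu(C_n)\to 1$. Your justification of $\mu(C_n)\uparrow\mu(X)=1$ by exhaustion is in fact slightly more explicit than the paper's, which simply asserts the convergence.
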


\begin{proof}
	Define
	$s_{n} = \frac{1}{r_{n}+1}\sum \limits_{i=0}^{r_{n}} s_{n,i}$ where $\{ s_{n,i} \}_{\{r_{n}\}}$ is the spacer sequence so
	$\mu(C_{n+1}) = \mu(C_{n}) + s_{n}\mu(I_{n}) = \mu(C_{n})\big{(}1 + \frac{s_{n}}{h_{n}}\big{)}$, meaning
	$\mu(C_{n}) = \mu(C_{1})~\prod \limits_{j=1}^{n-1}(1 + \frac{s_{j}}{h_{j}})$.
	Since
	$h_{n+1} = (r_{n}+1)h_{n} + \sum \limits_{i=0}^{r_{n}}s_{n,i} = (r_{n}+1)h_{n} \big{(} 1 + \frac{s_{n}}{h_{n}} \big{)}$
	and $h_{0} = 1$, we have $h_{n} = \prod \limits_{j=1}^{n-1} (r_{j}+1)(1 + \frac{s_{j}}{h_{j}}) = \left(\prod \limits_{j=1}^{n-1}(r_{j}+1)\right)\frac{\mu(C_{n})}{\mu(C_{1})}$ and $\mu(C_{n}) \to 1$.
\end{proof}

In order to discuss word complexity for rank-one transformations, we need to deal with symbolic models. Suppose that $T$ is a rank-one system as defined above, with associated $\{ r_n \}$ and $\{ s_{n,i} \}$. We will define a subshift $X(T)$ with alphabet $\{0,1\}$ which is measure-theoretically isomorphic to $T$. Define a sequence of words as follows: $B_1 = 0$, and for every 
$n > 1$, 
\begin{equation*}\label{rk1word}
B_{n+1}=B_n1^{s_{n,0}}B_n1^{s_{n,1}}\dots 1^{s_{n,r_n}}= \prod_{i=0}^{r_n} B_n1^{s_{n,i}}.
\end{equation*}

The motivation here should be clear; $B_n$ is a symbolic coding of the column $C_n$, where $0$ represents levels which come from the first column $C_1$, and $1$ represents levels which are spacers. Define $X(T)$ to consist of all biinfinite $\{0,1\}$ sequences where every subword is a subword of some $B_n$. We note that $X(T)$ is not uniquely ergodic if the spacer sequence $\{ s_{n,i} \}$ is unbounded (which will always be the case for us), since the sequence $1^{\infty}$ is always in $X(T)$. Nevertheless, there is a `natural' measure associated to $X(T)$:

\begin{definition}
The \textbf{empirical measure} for a symbolic model $X(T)$ of a rank-one system $T$ is the measure $\mu$ defined by
\[
\mu([w]) := \lim_{n \rightarrow \infty} \frac{|\{i \ : \ B_n(i) \ldots B_n(i + \ell - 1) = w\}|}{|B_n|}
\]
for every $\ell$ and every word $w$ of length $\ell$.
\end{definition}

It was proved in \cite{danilenko16}, \cite{adamsferenczipeterson17} (see \cite{fghsw21} for a more general definition of rank-one which includes odometers in the symbolic setting) that a rank-one MDS $T$ and its symbolic model $X(T)$ (with empirical measure $\mu$) are always measure-theoretically isomorphic, and so the symbolic model is measure-theoretically mixing iff the original rank-one was. Due to this isomorphism, in the sequel we move back and forth between rank-one and symbolic model terminology as needed. For simplicity, we from now on write $\mathcal{L}(T)$ for the language of $X(T)$, and define:

\begin{definition}
A \textbf{mixing rank-one subshift} is a symbolic model of a rank-one transformation that is mixing with respect to its empirical measure.
\end{definition}

\section{Extremely elevated staircase transformations}

\begin{definition}  An \textbf{extremely elevated staircase transformation} is a rank-one transformation defined by cut sequence $\{ r_{n} \}$ and \textbf{elevating sequence} $\{ c_{n} \}$ with spacer sequence given by $s_{n,j} = c_{n} + i$ for $0 \leq i < r_{n}$ and $s_{n,r_{n}} = 0$.
The cut sequence $\{ r_{n} \}$ is required to be nondecreasing to infinity with $\frac{r_{n}^{2}}{h_{n}} \to 0$
and the elevating sequence $\{ c_{n} \}$ to satisfy $c_{1} \geq 1$ and $c_{n+1} \geq h_{n} + 2c_{n} + 2r_{n} - 2$ and $\sum \frac{c_{n}+r_{n}}{h_{n}} < \infty$.
\end{definition}

\begin{theorem}\label{T:mixing}
	Let $T$ be an extremely elevated staircase transformation.  Then $T$ is mixing (on a finite measure space).
\end{theorem}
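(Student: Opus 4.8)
The strategy is to follow the mixing arguments for classical staircase transformations in \cite{adams1998smorodinsky}, \cite{CreutzSilva2004}, and \cite{CreutzSilva2010}, modified to accommodate the two new features of the extremely elevated construction: the large elevation terms $c_n$ and the vanishing final spacer $s_{n,r_n}=0$. That $T$ lives on a finite measure space is immediate from the finiteness criterion recalled earlier, since $\frac{1}{r_n h_n}\sum_{i=0}^{r_n}s_{n,i}=\frac{1}{r_n h_n}\bigl(r_n c_n+\binom{r_n}{2}\bigr)=\frac{c_n}{h_n}+\frac{r_n-1}{2h_n}\le\frac{c_n+r_n}{h_n}$, which is summable by hypothesis.

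For mixing, I would first reduce to a statement about individual levels of columns. The levels of all the columns $C_n$ form a $\pi$-system --- for $n<m$, the intersection of a level of $C_n$ with a level of $C_m$ is either empty or the level of $C_m$ --- they generate the Borel $\sigma$-algebra modulo null sets (since $\mu(C_N)\to 1$ and the level partitions refine to points), and the mixing relation $\mu(A\cap T^{-t}B)\to\mu(A)\mu(B)$ passes through the finite Boolean operations needed to go from this $\pi$-system to the algebra it generates. A standard $\varepsilon/3$ approximation then reduces mixing of $T$ to showing $\mu(I\cap T^{-t}J)\to\mu(I)\mu(J)=\mu(I_n)^2$ for every $n$ and every pair of levels $I=I_{n,a}$, $J=I_{n,b}$ of $C_n$, where $\mu(I_n)=\mu(C_n)/h_n$ is the common measure of the $h_n$ levels of $C_n$.

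The heart of the proof is a spreading estimate for this quantity. Given large $t$, let $N\gg n$ be determined by $h_N\le t<h_{N+1}$. Proposition~\ref{P:hr} together with $c_n/h_n\to 0$ and $r_n^2/h_n\to 0$ gives $h_{N+1}=(r_N+1)h_N(1+o(1))$, so $t=q h_N+\rho$ with $1\le q\le(1+o(1))r_N$ and $0\le\rho<h_N$. Realizing a level of $C_N$ inside $C_{N+1}$ as the disjoint union of its $r_N+1$ sublevels (one per subcolumn), the explicit spacers $s_{N,i}=c_N+i$ show that $T^{h_N}$ carries level $\ell$ of subcolumn $i$ to level $\ell-s_{N,i}$ of subcolumn $i+1$ whenever this is defined; iterating, $T^{q h_N}$ carries subcolumn $i$ to subcolumn $i+q$ with downward displacement $\sum_{k=i}^{i+q-1}(c_N+k)=q c_N+q i+\binom{q}{2}$, provided the point has not run off the top of $C_{N+1}$ in the process, after which $T^{\rho}$ translates within the column. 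Refining this to also record which subcolumns of $C_{N-1},\dots,C_n$ the point passes through, and accounting separately for the mass (substantial when $q$ is large) that does run off the top of $C_{N+1}$ into $C_{N+2}$, the value $\mu(I\cap T^{-t}J)$ becomes $\mu(I_n)$ times the proportion of configurations for which $T^t$ lands the point in $J$. One then shows this proportion is $(1+o(1))\mu(I_n)$, uniformly for $h_N\le t<h_{N+1}$ as $N\to\infty$: the quadratic staircase term $\binom{q}{2}$, the linear terms $q i$, and the analogous contributions $\binom{i_j}{2}$ and $i_j c_j$ from the lower columns together make the $T^t$-images equidistribute among the $h_n$ levels of $C_n$, and a Weyl-type exponential-sum bound --- available because $r_N\to\infty$ --- converts this equidistribution into $\mu(I\cap T^{-t}J)=(1+o(1))\mu(I_n)^2$. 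With the reduction above, this gives mixing.

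The main obstacle is precisely this equidistribution, carried out uniformly in $t$, and it is made delicate by both new features of the construction. The elevation contributes pure translations --- $q c_N$ at scale $N$, and $i_j c_j$ at lower scales --- which cannot by themselves generate spreading, so one must verify they introduce no gcd-type obstruction (for instance when $q$ is a multiple of $h_n$); this is why one works with the full mixed-scale expansion between $C_n$ and $C_N$ rather than at the single scale $h_N$, letting the quadratic terms supply what the translations cannot. The subtlest point is controlling the mass that wraps past the top of $C_{N+1}$ --- the sublevels with $i$ close to $r_N$, together with the anomalous last subcolumn where $s_{N,r_N}=0$ --- which is a constant fraction of the mass for many $t$; one must follow this mass into $C_{N+2}$ and show that it re-enters low, cleanly identifiable levels and itself spreads correctly, with total error $o(1)$. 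I expect the hypotheses $c_{n+1}\ge h_n+2c_n+2r_n-2$ and $\sum\frac{c_n+r_n}{h_n}<\infty$ --- the former keeping the spacer runs long relative to the column heights, the latter making the accumulated errors summable in $N$ --- to be exactly what is needed to push these estimates through.
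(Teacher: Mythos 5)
Your setup is sound: the finite-measure computation is correct, the reduction to levels of columns is the same as the paper's Proposition \ref{UniformToMixing}, and the displacement algebra $T^{qh_N}(I_{N,a}^{[i]})=I_{N,\,a-qc_N-qi-\binom{q}{2}}^{[i+q]}$ is exactly the identity the paper exploits (Lemma \ref{inequalforMix}). But the heart of your argument --- the uniform equidistribution of the landing levels --- is asserted rather than proved, and the mechanism you propose for it is not the right one. At the top scale the displacement varies with the subcolumn index $i$ only through the \emph{linear} term $qi$ (the terms $qc_N$ and $\binom{q}{2}$ are constant in $i$), so there is no quadratic Weyl sum to estimate there; what you actually need is that an arithmetic progression of gap $q$ and length $\sim r_N$ hits the set of $C_N$-levels lying inside $J$ with frequency $\approx\mu(J)$. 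You correctly flag the gcd obstruction for this and propose to rescue it by descending to the quadratic terms at lower scales, but that mixed-scale combinatorial expansion is precisely the hard, uncarried-out part, and it is not what any of the cited proofs do.

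The paper's route avoids the number theory entirely: the sum over sublevels of $\chi_B$ evaluated at the displaced points is literally an ergodic average $\frac{1}{r_n+1}\sum_i \chi_B\circ T^{-ki-\frac{1}{2}k(k-1)}$ for the transformation $T^k$, so for \emph{fixed} $k$ it converges in $L^1$ by the mean ergodic theorem once $T^k$ is known to be ergodic --- and ergodicity of all powers is bootstrapped from the $k=1$ case (a single mixing sequence forces weak mixing, Corollary \ref{C:Tkerg}). The remaining, and genuinely delicate, issue is that in your decomposition $q$ grows like $r_N$, so one needs this convergence \emph{uniformly in the growing power}; the paper handles this with the Block Lemma \ref{BlockLemma} and Lemma \ref{L:tnfixedl2}, reducing averages along $\{jk_n\}_{j<n}$ to mixing along $\{\ell t_n\}$ for fixed $\ell$, which is already established by interpolation (Proposition \ref{P:CCC}). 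Your proposal has no analogue of either the power-ergodicity step or the bootstrapping of ergodicity of powers, and these, together with the treatment of the wrapped mass via the identity $T^{k_n(h_n+c_n)}(I_{n,b}^{[i]})=T^{h_{n+1}}(I_{n+1,\,b+h_n+2c_n-\frac{1}{2}r_n(r_n-1)})$ for $i>r_n-k_n+1$, are the essential content of the proof. As written, the proposal is a plausible program with the two hardest steps left open, not a proof.
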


The proof of Theorem \ref{T:mixing} is postponed to the appendix.

The symbolic representation of an extremely elevated staircase is $B_{1} = 0$ and $h_{1} = 1$ and,
\[
B_{n+1} = \Big{(}\prod_{i=0}^{r_{n}-1}B_{n}1^{c_{n}+i}\Big{)}B_{n} \quad\quad\text{and}\quad\quad
h_{n+1} = (r_{n}+1)h_{n} + r_{n}c_{n} + \frac{1}{2}r_{n}(r_{n}-1).
\]

\subsection{Right-special words in the language of \texorpdfstring{$T$}{T}}

\begin{proposition}\label{P:twosuccs}
Let $T$ be an extremely elevated staircase transformation with language $\mathcal{L}(T)$.
If $w \in \mathcal{L}(T)$ is right-special then exactly one of the following holds:
\begin{enumerate}[\hspace{10pt}(i)\hspace{5pt}]
\item\label{Pt-1} $w = 1^{\len{w}}$; or
\item\label{Pt-2} $w$ is a suffix of $1^{c_{n}+r_{n}-1}B_{n}1^{c_{n}}$ for some $n$ and $\len{w} > c_{n}$; or
\item\label{Pt-3} $w$ is a suffix of $1^{c_{n}+i-1}B_{n}1^{c_{n}+i}$ for some $n$ and $0 < i < r_{n}$ and $\len{w} > c_{n}+i$.
\end{enumerate}
\end{proposition}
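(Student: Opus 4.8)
The plan is to locate a right-special word via its trailing block of $1$s and then use the two required extensions $w0$ and $w1$ (both in $\mathcal{L}(T)$) to force it into one of the three listed forms.

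First I would record two easy facts. Since every spacer is a run of at least $c_n\ge 1$ ones, $00\notin\mathcal{L}(T)$, so a nonempty right-special $w$ must end in $1$; and if $w$ contains no $0$ then $w=1^{\len{w}}$, which is case~(i). Second, an induction on $n$ shows that each $B_n$ begins and ends with $0$ and that the maximal runs of $1$s inside $B_n$ are exactly the spacer blocks $1^{c_m+i}$ with $m<n$ and $0\le i<r_m$; since $c_{m+1}\ge h_m+2c_m+2r_m-2>c_m+r_m-1$, the length ranges $\{c_m,\dots,c_m+r_m-1\}$ are pairwise disjoint, so a run length determines the pair $(m,i)$. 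Now let $w$ be right-special and contain a $0$, and write $w=v1^k$ with $v$ ending in $0$ and $k\ge 1$, so that $1^k$ is the trailing maximal run of $w$. From $w0=v1^k0\in\mathcal{L}(T)$ this run lies between two $0$s inside some $B_N$, hence is a whole spacer block, so $k=c_n+i$ for a unique $n$ and a unique $i$ with $0\le i<r_n$; the case $i=0$ will lead to case~(ii), and $i\ge 1$ to case~(iii), with this $n$ (and $i$).

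The structural core is a description of occurrences. If a word ending in $0$ occurs interior to some $B_N$, then that $0$ is the right endpoint of a nested chain of copies $B_1,\dots,B_\ell$ for a well-defined maximal $\ell<N$, and within the $B_{\ell+1}$-copy containing it this $B_\ell$-copy is some copy $j$ with $1\le j\le r_\ell$ (it is not the terminal copy, by maximality of $\ell$); hence it is immediately followed by the spacer $1^{c_\ell+j-1}$, and if $j\ge 2$ it is immediately preceded by $1^{c_\ell+j-2}B_\ell$, while if $j=1$ it is preceded by a run of $1$s whose length is $0$ or at least $c_{\ell+1}$ (unwinding the nesting above $B_{\ell+1}$). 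Applying this to an interior occurrence of $w0=v1^{c_n+i}0$: the maximal run just to the right of $v$ has length exactly $c_n+i$, which by the disjointness above forces $\ell=n$ and $j=i+1$. Thus in that occurrence the terminal $0$ of $v$ ends a copy of $B_n$ that is immediately preceded by $1^{c_n+i-1}B_n$ when $i\ge 1$, and preceded merely by a run of $1$s (of length $0$ or at least $c_{n+1}$) when $i=0$.

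Finally I would invoke $w1=v1^{c_n+i+1}\in\mathcal{L}(T)$. In an occurrence realizing it, $v$ is followed by a maximal run of length at least $c_n+i+1$, so the description above forces that there the terminal $0$ of $v$ either ends a copy of $B_\ell$ with $\ell>n$ --- equivalently, ends the terminal $B_n$-copy of some $B_{n+1}$-copy, hence is preceded by $1^{c_n+r_n-1}B_n$ --- or ends copy $j'$ of $B_n$ with $i+2\le j'\le r_n$, hence preceded by $1^{c_n+j'-2}B_n$; either way, reading leftward past its last $B_n$-block, $v$ runs into at least $c_n+i$ consecutive $1$s (at least $c_n$ when $i=0$). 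Comparing with the $w0$-occurrence: when $i\ge 1$, there the stretch of $1$s to the left of the last $B_n$-block has length exactly $c_n+i-1$ and is then stopped by a $0$, so the single fixed word $v$ can carry at most $c_n+i-1$ ones to the left of its last $B_n$-block; hence $v$ is a suffix of $1^{c_n+i-1}B_n$, $w=v1^{c_n+i}$ is a suffix of $1^{c_n+i-1}B_n1^{c_n+i}$, and $\len{w}>c_n+i$ since $v\ne\emptyset$, which is case~(iii). When $i=0$ the $w0$-occurrence gives no leftward bound, but the $w1$-occurrence shows that past its last $B_n$-block $v$ carries at most $c_n+r_n-1$ consecutive ones before hitting a $0$, so $v$ is a suffix of $B_n$ or equals $1^tB_n$ with $t\le c_n+r_n-1$; in either case $v$ is a suffix of $1^{c_n+r_n-1}B_n$, $w$ is a suffix of $1^{c_n+r_n-1}B_n1^{c_n}$, and $\len{w}>c_n$, which is case~(ii). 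Exclusivity then follows by inspecting trailing runs: the trailing maximal run of $w$ has length $\len{w}$ in case~(i), $c_n$ in case~(ii), and $c_n+i$ with $0<i<r_n$ in case~(iii), and since $c_{n+1}>c_n+r_n-1$ these values are pairwise distinct and, within case~(iii), recover $(n,i)$. The step I expect to demand the most care is this two-occurrence comparison --- in particular handling the $i=0$ versus $i\ge 1$ split, and the $\ell>n$ versus $\ell=n$ split in the $w1$-occurrence --- together with the routine bookkeeping needed to place both the $w0$- and $w1$-occurrences interior to one common $B_N$, so that the nested-copy structure is available on both sides of $v$.
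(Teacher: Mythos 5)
Your proof is correct and follows essentially the same route as the paper's: identify the unique pair $(n,i)$ from the trailing run of $1$s using $w0\in\mathcal{L}(T)$, use the rigid placement of $01^{c_n+i}0$ inside $B_{n+1}$ to determine the context to the left of $w$, and use $w1\in\mathcal{L}(T)$ to bound how far left $w$ can extend. Your two-occurrence comparison is just an unfolded version of the paper's observation that a longer suffix would force $w1$ to contain the forbidden word $01^{c_n+i-1}B_n1^{c_n+i+1}$ (resp.\ $1^{c_n+r_n}B_n1^{c_n+1}$ when $i=0$).
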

\begin{proof}
If $01^{t}0 \in \mathcal{L}(T)$ then there exists $m \geq 1$ and $0 \leq j < r_{m}$ such that $t = c_{m} + j$ as only spacer sequences can appear between $0$s.  Since $c_{n+1} \geq c_{n} + r_{n}$, for any such word the choice of $m$ is unique.  Moreover, since $01^{c_{m}+j}0$ only appears in $B_{m+1}$, which is always preceded by $1^{c_{m+1}}$, the word $01^{c_{m}+j}0$ only appears as a suffix of $1^{c_{m+1}}(\prod_{k=0}^{j}B_{m}1^{c_{m}+k})0$.

Let $w \in \mathcal{L}(T)$ be a right-special word.  Since $c_{1} \geq 1$, the word $00 \notin \mathcal{L}(T)$ so $w$ does not end with $0$.  If $w = 1^{\len{w}}$, it is of form $(\ref{Pt-1})$.  So we may assume that $w$ ends with $1$ and contains at least one $0$.

Let $z \in \mathbb{N}$ such that $w$ has $01^{z}$ as a suffix.

Since $w0 \in \mathcal{L}(T)$, $01^{z}0 \in \mathcal{L}(T)$ so there exists a unique $n \geq 1$ and $0 \leq i < r_{n}$ such that $z = c_{n} + i$.

First consider when $i > 0$.  
The word $w0$ has $01^{c_{n}+i}0$ as a suffix and that word only appears in the word $B_{n+1}$ meaning that $w0$ and $1^{c_{n+1}}(\prod_{j=0}^{i}B_{n}1^{c_{n}+j})0$ have a common suffix.

If $w$ has $01^{c_{n}+i-1}B_{n}1^{c_{n}+i}$ as a suffix then $w1$ has $01^{c_{n}+i-1}B_{n}1^{c_{n}+i+1}$ as a suffix but $01^{c_{n}+i-1}B_{n}1^{c_{n}+i+1} \notin \mathcal{L}(T)$.  Therefore $w$ is a suffix of $1^{c_{n}+i-1}B_{n}1^{c_{n}+i}$ and has length $\len{w} \geq c_{n} + i + 1$ so $w$ is of form $(\ref{Pt-3})$.

We are left with the case when $i = 0$, i.e.~when $z = c_{n}$.

The word $w0$ has $01^{c_{n}}0$ as a suffix and $01^{c_{n}}0$ only appears in the word $B_{n+1}$, and only immediately after the first $B_{n}$ in $B_{n+1}$.  As  the word $B_{n+1}$ is always preceded by $1^{c_{n+1}}$, then $w0$ and $1^{c_{n+1}}B_{n}1^{c_{n}}0$ have a common suffix.

If $w$ has $1^{c_{n}+r_{n}}B_{n}1^{c_{n}}$ as a suffix then $w1$ has $1^{c_{n} + r_{n}}B_{n}1^{c_{n}+1}$ as a suffix but $1^{c_{n} + r_{n}}B_{n}1^{c_{n}+1} \notin \mathcal{L}(T)$.

So $w$ is a suffix of $1^{c_{n}+r_{n}-1}B_{n}1^{c_{n}}$ of length $\len{w} \geq c_{n} + 1$ meaning $w$ is of form $(\ref{Pt-2})$.
\end{proof}

\begin{lemma}\label{twosucc1stcase}
	$1^{\ell}$ is right-special for all $\ell$.
\end{lemma}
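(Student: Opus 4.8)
The statement to prove is that $1^{\ell}$ is right-special for all $\ell$, i.e., both $1^{\ell}0$ and $1^{\ell}1 = 1^{\ell+1}$ lie in $\mathcal{L}(T)$. The plan is to exhibit, for each $\ell$, a block $B_n$ (for suitably large $n$) containing both $1^{\ell+1}$ as a subword and an occurrence of $1^{\ell}0$ as a subword.

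First I would observe that $1^{\ell+1} \in \mathcal{L}(T)$ for every $\ell$: indeed, the spacer run $1^{c_n + r_n - 1}$ appearing in $B_{n+1}$ (the block $1^{c_n + (r_n-1)}$ sitting between the last two copies of $B_n$) has length $c_n + r_n - 1$, and since $c_n \to \infty$, for $n$ large enough this exceeds $\ell + 1$; hence $1^{\ell+1}$ is a subword of $B_{n+1}$, so it is in $\mathcal{L}(T)$. (Alternatively one can simply note $1^\infty \in X(T)$, as remarked after the definition of $X(T)$.)

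Next I would handle $1^{\ell}0$. Fix $n$ large enough that $c_n > \ell$ (possible since $c_n \to \infty$ and, recall, $c_1 \ge 1$ with $c_{n+1} \ge c_n + r_n$ so the sequence is strictly increasing). In the block $B_{n+1} = \big(\prod_{i=0}^{r_n-1} B_n 1^{c_n + i}\big) B_n$, consider the subword $1^{c_n} B_n$ arising from the very first factor $B_n 1^{c_n + 0} B_n \ldots$; here $1^{c_n}$ is immediately followed by the leading symbol of $B_n$, which is $0$ (since $B_1 = 0$ and every $B_m$ begins with $B_1$, hence with $0$). Thus $1^{c_n}0$ is a subword of $B_{n+1}$, and since $c_n > \ell$, the suffix $1^{\ell}0$ of $1^{c_n}0$ is also a subword of $B_{n+1}$, so $1^{\ell}0 \in \mathcal{L}(T)$.

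Combining, for any $\ell$ choose $n$ large enough that both $c_n + r_n - 1 > \ell$ and $c_n > \ell$; then $B_{n+1}$ (or a pair of blocks $B_n, B_{n+1}$) witnesses that $1^{\ell}0$ and $1^{\ell+1}$ both belong to $\mathcal{L}(T)$, so $1^{\ell}$ is right-special. I do not anticipate a genuine obstacle here; the only point requiring a moment's care is making sure the chosen $n$ simultaneously makes the relevant spacer runs long enough, which is immediate from $c_n \to \infty$ together with $r_n \ge 1$.
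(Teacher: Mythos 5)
Your proof is correct and follows essentially the same route as the paper's: choose $n$ with $c_n \geq \ell$, read $1^{\ell}0$ off as a suffix of the occurrence of $1^{c_n}0$ in $B_{n+1}$, and get $1^{\ell}1$ from a longer spacer run. The paper's version is just more terse (it uses the suffix of $1^{c_n+1}$ for the second extension rather than the run $1^{c_n+r_n-1}$ or the point $1^{\infty}\in X(T)$).
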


\begin{proof}
	Find $n$ such that $\ell \leq \len{1^{c_{n}}}$.  Then $1^{\ell}0$ is a suffix of $1^{c_{n}}0$ and $1^{\ell}1$ is a suffix of $1^{c_{n}+1}$.
\end{proof}

\begin{lemma} \label{twosucc2ndcase}
	If $w$ is a suffix of $1^{c_{n}+r_{n}-1}B_{n}1^{c_{n}}$ then $w$ is right-special.
\end{lemma}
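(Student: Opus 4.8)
The plan is to reduce the lemma to two membership statements, $1^{c_{n}+r_{n}-1}B_{n}1^{c_{n}}0 \in \mathcal{L}(T)$ and $1^{c_{n}+r_{n}-1}B_{n}1^{c_{n}+1} \in \mathcal{L}(T)$. Granting these, the lemma is immediate: if $w$ is a suffix of $1^{c_{n}+r_{n}-1}B_{n}1^{c_{n}}$, then $w0$ is a suffix of the first word and $w1$ is a suffix of the second, and since $\mathcal{L}(T)$ is closed under taking subwords this forces $w0, w1 \in \mathcal{L}(T)$, i.e.\ $w$ is right-special. (In particular no case split on $\len{w}$ is needed; the short case $w = 1^{\len{w}}$ is subsumed.)

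To verify the two memberships I would exhibit one longer word containing both. Since $r_{n+1} \geq 1$, the defining product for $B_{n+2}$ shows that $B_{n+1}1^{c_{n+1}}B_{n+1}$ is a prefix of $B_{n+2}$, hence lies in $\mathcal{L}(T)$. By the defining product for $B_{n+1}$, the word $B_{n+1}$ ends with the block $1^{c_{n}+r_{n}-1}B_{n}$ and begins with the block $B_{n}1^{c_{n}}B_{n}$. Splicing these, $B_{n+1}1^{c_{n+1}}B_{n+1}$ contains the window $1^{c_{n}+r_{n}-1}\,B_{n}\,1^{c_{n+1}}\,B_{n}\,1^{c_{n}}\,B_{n}$, where the first $B_{n}$ is the last block of the first $B_{n+1}$ and the last two are the first blocks of the second $B_{n+1}$; recall $B_{n}$ begins with $0$.

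It then remains only to record the inequalities $c_{n+1} \geq c_{n}+1$ and $c_{n+1} \geq c_{n}+r_{n}-1$, both immediate from $c_{n+1} \geq h_{n}+2c_{n}+2r_{n}-2$ together with $h_{n},c_{n},r_{n} \geq 1$. The first shows $1^{c_{n+1}}$ has $1^{c_{n}+1}$ as a prefix, so the window contains $1^{c_{n}+r_{n}-1}B_{n}1^{c_{n}+1}$ (read the outer $B_{n}$ and the $1^{c_{n+1}}$ following it); the second shows $1^{c_{n+1}}$ has $1^{c_{n}+r_{n}-1}$ as a suffix, so the window contains $1^{c_{n}+r_{n}-1}B_{n}1^{c_{n}}0$ (read the $1^{c_{n+1}}$, then the inner $B_{n}$, its following $1^{c_{n}}$, and the leading $0$ of the last $B_{n}$). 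Both memberships follow, and with them the lemma.

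I do not anticipate a genuine obstacle; the argument is pure bookkeeping about which all-$1$ block precedes and follows each occurrence of $B_{n}$. The one place to be careful is that $w0$ and $w1$ are witnessed by \emph{different} occurrences of $B_{n}$ within the window (the inner and outer copies), and that one must invoke both of the inequalities on $c_{n+1}$ rather than merely $c_{n+1} \geq c_{n}$.
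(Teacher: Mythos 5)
Your proof is correct and takes essentially the same route as the paper: both exhibit the witnesses $1^{c_n+r_n-1}B_n1^{c_n}0$ and $1^{c_n+r_n-1}B_n1^{c_n}1$ inside $B_{n+1}1^{c_{n+1}}B_{n+1} \subseteq B_{n+2}$, using the final $B_n$ of the first $B_{n+1}$ (followed by $1^{c_{n+1}}$) for the successor $1$ and the leading $B_n1^{c_n}B_n$ of the second $B_{n+1}$ (preceded by $1^{c_{n+1}}$) for the successor $0$, together with the inequality $c_{n+1} \geq c_n + r_n - 1$. The only difference is cosmetic: you splice the two witnesses into a single window, whereas the paper treats them separately.
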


\begin{proof}
Choose any such $w$.  Observe that $B_{n+2}$ has $B_{n+1}1^{c_{n+1}}B_{n+1}$ as a subword and that has the subword $B_{n+1}1^{c_{n+1}}B_{n}1^{c_{n}}B_{n}$.  That word has $1^{c_{n}+r_{n}-1}B_{n}1^{c_{n}}0$ as a subword since $c_{n}+r_{n}-1 < c_{n+1}$ and so $w0$, being a suffix of $1^{c_{n}+r_{n}-1}B_{n}1^{c_{n}}0$, is in $\mathcal{L}(T)$.  Also $B_{n+2}$ has $B_{n+1}1^{c_{n+1}}$ as a subword which has  $1^{c_{n}+r_{n}-1}B_{n}1^{c_{n+1}}$ as a subword which then has $1^{c_{n}+r_{n}-1}B_{n}1^{c_{n}}1$ as a subword.  As $w1$ is a suffix of that word, $w1 \in \mathcal{L}(T)$.
\end{proof}

\begin{lemma}\label{twosucc3rdcase}
	If $w$ is a suffix of $1^{c_{n}+i-1}B_{n}1^{c_{n}+i}$ for $0 < i < r_{n}$ then $w$ is right-special.
\end{lemma}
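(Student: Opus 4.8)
The plan is to mirror the proof of Lemma~\ref{twosucc2ndcase}. Fix $n$ and $0 < i < r_{n}$ and let $w$ be a suffix of $1^{c_{n}+i-1}B_{n}1^{c_{n}+i}$; recall that showing $w$ is right-special means showing $w0, w1 \in \mathcal{L}(T)$. Since $w0$ is a suffix of $1^{c_{n}+i-1}B_{n}1^{c_{n}+i}0$ and $w1$ is a suffix of $1^{c_{n}+i-1}B_{n}1^{c_{n}+i}1 = 1^{c_{n}+i-1}B_{n}1^{c_{n}+i+1}$, and subwords of words in $\mathcal{L}(T)$ are again in $\mathcal{L}(T)$, it suffices to exhibit each of these two words as a subword of some $B_{m}$.

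For $w0$, I would use $B_{n+1}$ itself: from $B_{n+1} = \big(\prod_{j=0}^{r_{n}-1}B_{n}1^{c_{n}+j}\big)B_{n}$ and the hypotheses $0 \leq i-1$ and $i \leq r_{n}-1$, the consecutive blocks $B_{n}\,1^{c_{n}+i-1}\,B_{n}\,1^{c_{n}+i}\,B_{n}$ occur in $B_{n+1}$. Since every $B_{k}$ begins and ends with $0$ (induction from $B_{1}=0$), these $1$-runs are maximal of exactly the stated lengths, and in particular $1^{c_{n}+i-1}B_{n}1^{c_{n}+i}0 \in \mathcal{L}(T)$, giving $w0 \in \mathcal{L}(T)$.

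For $w1$, I would pass to $B_{n+2}$, because when $i = r_{n}-1$ the run $1^{c_{n}+i+1}$ is longer than any $1$-run inside $B_{n+1}$. As in Lemma~\ref{twosucc2ndcase}, $B_{n+2}$ contains $B_{n+1}1^{c_{n+1}}B_{n+1}$, and $B_{n+1}$ ends with $1^{c_{n}+r_{n}-1}B_{n}$, so $B_{n+2}$ contains $1^{c_{n}+r_{n}-1}B_{n}1^{c_{n+1}}$. Then I would invoke $c_{n}+r_{n}-1 \geq c_{n}+i-1$ (as $i < r_{n}$) and $c_{n+1} \geq c_{n}+r_{n} \geq c_{n}+i+1$ (the inequality $c_{n+1}\geq c_{n}+r_{n}$ is the one already noted in the proof of Proposition~\ref{P:twosuccs}, following from $c_{n+1}\geq h_{n}+2c_{n}+2r_{n}-2$ and $h_{n},c_{n},r_{n}\geq 1$) to conclude that $1^{c_{n}+i-1}B_{n}1^{c_{n}+i+1}$ is a subword of $1^{c_{n}+r_{n}-1}B_{n}1^{c_{n+1}}$, hence of $B_{n+2}$, hence $w1 \in \mathcal{L}(T)$.

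The only real care needed is the run-length bookkeeping: one must check that the $1$-run immediately before the central $B_{n}$ has length at least $c_{n}+i-1$ and, for the $w1$ half, that a $1$-run of length at least $c_{n}+i+1$ is available right after it --- the latter being exactly why $B_{n+2}$, not $B_{n+1}$, is the right place to look for that half. Everything else is a direct reading of the formula for $B_{n+1}$ and the defining inequalities, so I do not expect a substantive obstacle.
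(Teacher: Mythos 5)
Your proof is correct and follows essentially the same approach as the paper: exhibit $w0$ as a subword of $B_{n+1}$ (via the consecutive blocks $1^{c_{n}+i-1}B_{n}1^{c_{n}+i}B_{n}$) and $w1$ as a subword of a later stage word. The only difference is that you handle $w1$ uniformly through the subword $1^{c_{n}+r_{n}-1}B_{n}1^{c_{n+1}}$ of $B_{n+2}$, whereas the paper stays inside $B_{n+1}$ (using $1^{c_{n}+i}B_{n}1^{c_{n}+i+1}$) when $i<r_{n}-1$ and only passes to $B_{n+2}$ in the boundary case $i=r_{n}-1$; your unification is a valid minor streamlining, justified exactly as you say by $c_{n+1}\geq c_{n}+r_{n}\geq c_{n}+i+1$.
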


\begin{proof}  
Choose any such $w$.
 Since$B_{n+1}$ has $1^{c_{n}+i-1}B_{n}1^{c_{n}+i}B_{n}$ as a subword, $1^{c_{n}+i-1}B_{n}1^{c_{n}+i}0 \in \mathcal{L}(T)$.  When $i < r_{n} - 1$, $B_{n+1}$ has $1^{c_{n}+i}B_{n}1^{c_{n}+i+1}$ as a subword which gives $11^{c_{n}+i-1}B_{n}1^{c_{n}+i}1$; when $i = r_{n}-1$, $B_{n+2}$ has $1^{c_{n}+r_{n}-1}B_{n}1^{c_{n+1}}$ as a subword which gives $11^{c_{n} + r_{n} - 2}B_{n}1^{c_{n}+r_{n}-1}1$ as $r_{n} < c_{n+1}$.   As $w$ is a suffix of $1^{c_{n}+i-1}B_{n}1^{c_{n}+i}$, it is right-special.
\end{proof}

\begin{lemma}\label{L:2s}
Let $T$ be an extremely elevated staircase transformation.
	For $w \in \mathcal{L}(T)$, let $n$ be the unique integer such that $w$ has $1^{c_{n}}$ as a subword and does not have $1^{c_{n+1}}$ as a subword.
	
	Then $w$ is right-special if and only if exactly one of the following holds:
\begin{enumerate}[\hspace{10pt}$(i)_{n}$\hspace{5pt}]
\item\label{Lt-0} $w = 1^{\len{w}}$ and $c_{n} \leq \ell < c_{n+1}$; or
\item\label{Lt-2} $w$ is a suffix of $1^{c_{n}+i-1}B_{n}1^{c_{n}+i}$ and $\len{w} > c_{n}+i$ for some $0 \leq i < r_{n}$; or
\item\label{Lt-3} $w$ is a suffix of $1^{c_{n}+r_{n}-1}B_{n}1^{c_{n}}$ and $\len{w} \geq h_{n} + 2c_{n}$.
\end{enumerate}
\end{lemma}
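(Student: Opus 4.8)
The plan is to derive this refined list directly from Proposition~\ref{P:twosuccs} together with Lemmas~\ref{twosucc1stcase}, \ref{twosucc2ndcase} and \ref{twosucc3rdcase}; the only genuinely new content is the bookkeeping by the index $n$ and the observation that the words in clause~(ii) of Proposition~\ref{P:twosuccs} split, by length, into the $i=0$ instances of $(ii)_n$ and the words of $(iii)_n$. First I would record two facts used throughout. Since $h_n\geq 1$, the hypothesis $c_{n+1}\geq h_n+2c_n+2r_n-2$ gives $c_{n+1}>c_n+r_n-1$, so the intervals $[c_m,c_m+r_m-1]$, $m\geq 1$, are pairwise disjoint and increasing. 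And an easy induction on $B_{n+1}=(\prod_{i=0}^{r_n-1}B_n1^{c_n+i})B_n$, using that each $B_n$ begins and ends with $0$, shows that every maximal run of $1$s inside $B_n$ has length $c_m+i$ for some $m<n$ and $0\leq i<r_m$, hence length $<c_n$; consequently each of $1^{c_n+r_n-1}B_n1^{c_n}$ and $1^{c_n+i-1}B_n1^{c_n+i}$ ($0\leq i<r_n$) has all of its $1$-blocks of length $\leq c_n+r_n-1<c_{n+1}$, so every subword of one of them that contains $1^{c_n}$ has exactly $n$ as its associated index.

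For the forward direction, let $w$ be right-special and apply Proposition~\ref{P:twosuccs}. If $w=1^{\len{w}}$, then the defining property of $n$ gives $c_n\leq\len{w}<c_{n+1}$, i.e.\ $(i)_n$. If $w$ is a suffix of $1^{c_m+i-1}B_m1^{c_m+i}$ with $0<i<r_m$ and $\len{w}>c_m+i$, then $w$ contains its trailing $1^{c_m+i}\supseteq 1^{c_m}$, so $m=n$ by the remark above and $w$ is of form $(ii)_n$. If $w$ is a suffix of $1^{c_m+r_m-1}B_m1^{c_m}$ with $\len{w}>c_m$, then $w$ contains its trailing $1^{c_m}$, so again $m=n$; since $1^{c_n+r_n-1}B_n1^{c_n}$ has length $h_n+2c_n+r_n-1$, either $\len{w}\geq h_n+2c_n$, which is $(iii)_n$, or $c_n<\len{w}\leq h_n+2c_n-1$, in which case $w$ is a suffix of the last $h_n+2c_n-1$ symbols of that word, namely of $1^{c_n-1}B_n1^{c_n}$, which is the $i=0$ instance of $(ii)_n$.

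For the converse, each listed form is right-special: $(i)_n$ by Lemma~\ref{twosucc1stcase}, $(ii)_n$ with $i>0$ by Lemma~\ref{twosucc3rdcase}, and both $(ii)_n$ with $i=0$ and $(iii)_n$ by Lemma~\ref{twosucc2ndcase}, since $1^{c_n-1}B_n1^{c_n}$ is a suffix of $1^{c_n+r_n-1}B_n1^{c_n}$. Finally one verifies that for the forced value of $n$ the three alternatives are mutually exclusive: a word of form $(i)_n$ contains no $0$, whereas the length hypotheses in $(ii)_n$ and $(iii)_n$ force $w$ to reach past its trailing block of $1$s into the final $0$ of $B_n$; and among the latter cases one compares the length of the trailing run of $1$s in $w$, which (once $\len{w}$ is large enough to expose the $0$ before it, as the hypotheses guarantee) equals exactly $c_n+i$ in case $(ii)_n$ and exactly $c_n$ in case $(iii)_n$, so distinct values of $i$, and $i>0$ versus $(iii)_n$, are incompatible, while the $i=0$ instance of $(ii)_n$ is ruled out against $(iii)_n$ by length alone. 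The only step where I expect real care is this length bookkeeping --- in particular the off-by-one that makes the ``short'' suffixes of $1^{c_n+r_n-1}B_n1^{c_n}$ coincide exactly with the suffixes of $1^{c_n-1}B_n1^{c_n}$ of length $>c_n$ --- together with checking that the degenerate cases $c_n=1$ and $w=1^\ell$ with $\ell<c_1$ (which has no associated index and so lies outside the statement) cause no trouble.
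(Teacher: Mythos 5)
Your proof is correct and follows essentially the same route as the paper's, which simply cites Proposition~\ref{P:twosuccs} together with Lemmas~\ref{twosucc1stcase}--\ref{twosucc3rdcase} and notes that the length restrictions prevent overlap. You supply the details the paper leaves implicit --- in particular the forced identification $m=n$ via the maximal $1$-runs and the length split of the Proposition's clause~(\ref{Pt-2}) into the $i=0$ case of $(\ref{Lt-2})_n$ and case $(\ref{Lt-3})_n$ --- and these are all handled correctly.
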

\begin{proof}
	The only words in Proposition \ref{P:twosuccs} which have $1^{c_{n}}$ as a subword, $1^{c_{n+1}}$ not a subword and at least one $0$ are of the stated forms and Lemmas \ref{twosucc1stcase}, \ref{twosucc2ndcase} and \ref{twosucc3rdcase} state that these words are right-special.  The restriction on $len{w}$ in form $(\ref{Lt-3})_{n}$ prevents any overlap between forms $(\ref{Lt-2})_{n}$ and $(\ref{Lt-3})_{n}$; the requirement that $len{w} > c_{n} + i$ ensures no overlap with form $(\ref{Lt-0})_{n}$ by either of the other two.
\end{proof}

The largest length we need consider for a given $n$ is then $h_{n} + 2c_{n} + 2(r_{n}-1) - 1$, explaining the requirement on $c_{n+1}$ in the definition of extremely elevated staircases and leading to:
\begin{definition}
The \textbf{post-productive sequence}  is
$
m_{n} = h_{n} + 2c_{n} + 2r_{n} - 2
$.
\end{definition}

\begin{proposition}\label{thing}
For an extremely elevated staircase transformation,
there is at most one right-special word of each of the forms in Lemma \ref{L:2s} and
\begin{enumerate}[\hspace{10pt}$(i)_{n}$\hspace{5pt}]
\item\label{p1} there is a word of form $(\ref{Lt-0})_{n}$ only for $c_{n} \leq \ell < c_{n+1}$; and
\item\label{p2} for each $0 \leq i < r_{n}$, there is a word of form $(\ref{Lt-2})_{n}$ for that value of $i$ only for $c_{n} + i < \ell \leq h_{n} + 2c_{n} + 2i - 1$; and
\item\label{pspec} there is a word of form $(\ref{Lt-3})_{n}$ only for $h_{n} + 2c_{n} \leq \ell < h_{n} + 2c_{n} + r_{n}$.
\end{enumerate}
\end{proposition}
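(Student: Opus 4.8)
The plan is to read each assertion off of Lemma~\ref{L:2s} together with the elementary fact that a suffix of a fixed finite word is determined by its length.

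\textbf{Uniqueness of each form at each length.} For form $(\ref{Lt-0})_{n}$ the only word of a given length is $1^{\ell}$. For a fixed parameter $i$, every word of form $(\ref{Lt-2})_{n}$ is a suffix of the single word $1^{c_{n}+i-1}B_{n}1^{c_{n}+i}$, and a suffix of a fixed word is determined by its length, so there is at most one of each length; likewise every word of form $(\ref{Lt-3})_{n}$ is a suffix of the single word $1^{c_{n}+r_{n}-1}B_{n}1^{c_{n}}$.

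\textbf{The length ranges.} In each case the lower bound on $\ell$ is exactly the length condition already imposed in Lemma~\ref{L:2s}, and the upper bound comes from the length of the word of which $w$ is a suffix. For form $(\ref{Lt-0})_{n}$ this is immediate. For form $(\ref{Lt-2})_{n}$ with parameter $i$, the word $1^{c_{n}+i-1}B_{n}1^{c_{n}+i}$ has length $(c_{n}+i-1)+h_{n}+(c_{n}+i)=h_{n}+2c_{n}+2i-1$, giving $c_{n}+i<\ell\le h_{n}+2c_{n}+2i-1$; for form $(\ref{Lt-3})_{n}$, the word $1^{c_{n}+r_{n}-1}B_{n}1^{c_{n}}$ has length $h_{n}+2c_{n}+r_{n}-1$, giving $h_{n}+2c_{n}\le \ell\le h_{n}+2c_{n}+r_{n}-1<h_{n}+2c_{n}+r_{n}$. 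Conversely, for every $\ell$ in the stated range I would take the suffix $w$ of the appropriate fixed word of length $\ell$; Lemmas~\ref{twosucc1stcase}, \ref{twosucc2ndcase}, \ref{twosucc3rdcase} show $w$ is right-special (in particular $w\in\mathcal{L}(T)$), and it remains to check that the integer attached to $w$ by Lemma~\ref{L:2s} is the one in question. This is a short computation with runs of $1$s: the terminal run of $1$s in $w$ has length at least $c_{n}$ (since $\ell$ exceeds the length of that run in the fixed word), so $1^{c_{n}}$ occurs in $w$; and every maximal run of $1$s occurring anywhere in the relevant fixed word has length at most $c_{n}+r_{n}-1$, which is strictly less than $c_{n+1}$ because $c_{n+1}\ge h_{n}+2c_{n}+2r_{n}-2$, so $1^{c_{n+1}}$ does not occur. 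For form $(\ref{Lt-0})_{n}$ the integer attached to $1^{\ell}$ is plainly the unique $n$ with $c_{n}\le\ell<c_{n+1}$.

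\textbf{The point requiring care.} The argument is essentially bookkeeping; the only delicate spot is the boundary value $i=0$ of form $(\ref{Lt-2})_{n}$, where the word $1^{c_{n}-1}B_{n}1^{c_{n}}$ is not of the shape covered by Lemma~\ref{twosucc3rdcase}, but it is a suffix of $1^{c_{n}+r_{n}-1}B_{n}1^{c_{n}}$, so its right-specialness is supplied instead by Lemma~\ref{twosucc2ndcase}. One should also observe that its length $h_{n}+2c_{n}-1$ is exactly the threshold below which form $(\ref{Lt-3})_{n}$ is impossible, so forms $(\ref{Lt-2})_{n}$ with $i=0$ and $(\ref{Lt-3})_{n}$ do not overlap, consistent with Lemma~\ref{L:2s}.
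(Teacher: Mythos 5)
Your proof is correct and follows essentially the same route as the paper's: uniqueness comes from the fact that a suffix of a fixed word is determined by its length, and the length ranges come from combining the lower bounds in Lemma~\ref{L:2s} with the lengths $h_{n}+2c_{n}+2i-1$ and $h_{n}+2c_{n}+r_{n}-1$ of the containing words. You are in fact somewhat more thorough than the paper, which leaves the existence direction and the verification of the attached index $n$ implicit; your handling of the $i=0$ boundary case via Lemma~\ref{twosucc2ndcase} is also correct.
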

\begin{proof}
Every $w$ of a form in Lemma \ref{L:2s} for a given $n$ has length $c_{n} \leq len{w} < m_{n} \leq c_{n+1}$ so for every length $\ell$ there is exactly one $n$ for which Lemma \ref{L:2s} could potentially give a right-special word.

$1^{\ell}$ is of form $(\ref{Lt-0})_{n}$ for $c_{n} \leq \ell < c_{n+1}$.

If $w$ is of form $(\ref{Lt-2})_{n}$, itis a suffix of $1^{c_{n}+r_{n}-1}B_{n}1^{c_{n}}$ so $\len{w} \leq \len{1^{c_{n}+r_{n}-1}B_{n}1^{c_{n}}} = h_{n} + 2c_{n} + r_{n} - 1$.

If $w$ is of form $(\ref{Lt-3})_{n}$, it is a suffix of $1^{c_{n}+i-1}B_{n}1^{c_{n}+i}$ so $\len{w} \leq \len{1^{c_{n}+i-1}B_{n}1^{c_{n}+i}} = h_{n} + 2c_{n} + 2i - 1$.
\end{proof}

\subsection{Counting right-special words of length \texorpdfstring{$\ell$}{l} for extremely elevated staircases}

\begin{lemma}\label{cf1}
	If $c_n \leq \ell < c_n + r_n$ then $p(\ell +1)-p(\ell)=(\ell-c_n)+1$.
\end{lemma}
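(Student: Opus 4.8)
The plan is to pass, via Cassaigne's lemma, from the difference $p(\ell+1)-p(\ell)$ to the number of right-special words of length $\ell$, and then to extract that number directly from Lemma~\ref{L:2s} and Proposition~\ref{thing}. Applying Cassaigne's lemma with the two lengths $\ell$ and $\ell+1$ gives $p(\ell+1)-p(\ell)=|\mathcal{L}^{RS}_\ell(T)|$, so it suffices to show that there are exactly $(\ell-c_n)+1$ right-special words of length $\ell$ when $c_n\le\ell<c_n+r_n$. Since $h_n,c_n,r_n\ge 1$, the defining inequality $c_{n+1}\ge h_n+2c_n+2r_n-2$ gives $c_n+r_n\le c_{n+1}$, and as $\{c_m\}$ is strictly increasing the intervals $[c_m,c_{m+1})$ partition $[c_1,\infty)$; since every word of a form $(i)_m$, $(ii)_m$ or $(iii)_m$ has length in $[c_m,c_{m+1})$, the only index relevant to a right-special word of length $\ell$ (which lies in $[c_n,c_{n+1})$) is $m=n$. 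By Lemma~\ref{L:2s}, the right-special words of length $\ell$ are then exactly the words of forms $(i)_n$, $(ii)_n$, $(iii)_n$ of that length, and the length conditions built into Lemma~\ref{L:2s} keep the three families disjoint, so $|\mathcal{L}^{RS}_\ell(T)|$ is the sum of the three subcounts.

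Next I would evaluate the three subcounts. Form $(i)_n$ contributes the single word $1^\ell$ (since $c_n\le\ell<c_{n+1}$). Form $(iii)_n$ contributes nothing, since such a word requires $\len{w}\ge h_n+2c_n$, whereas here $\ell<c_n+r_n\le c_n+h_n<h_n+2c_n$ — this uses that $r_n^2/h_n\to 0$ forces $h_n\ge r_n$ (for all large $n$, which is all that is needed). For form $(ii)_n$, Proposition~\ref{thing} gives, for each $0\le i<r_n$, exactly one candidate word, existing precisely when $c_n+i<\ell\le h_n+2c_n+2i-1$. The inequality $c_n+i<\ell$ holds exactly for $i\in\{0,1,\dots,\ell-c_n-1\}$, and for every such $i$ the upper bound is automatic, since $\ell\le c_n+r_n-1<h_n+2c_n-1\le h_n+2c_n+2i-1$ (again by $h_n\ge r_n$). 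These $\ell-c_n$ words are pairwise distinct and none equals $1^\ell$: a form-$(ii)_n$ word with parameter $i$ has length $>c_n+i$ and is a suffix of $1^{c_n+i-1}B_n1^{c_n+i}$, and since $B_n$ ends in $0$ its terminal run of $1$'s has length exactly $c_n+i$, which recovers $i$ (and in particular shows the word contains a $0$). Summing, $|\mathcal{L}^{RS}_\ell(T)|=1+(\ell-c_n)+0=(\ell-c_n)+1$.

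The only substantive point beyond transcribing Lemma~\ref{L:2s} and Proposition~\ref{thing} is the pair of observations that form $(iii)_n$ cannot occur in this length range and that the upper endpoint of the admissible range for form $(ii)_n$ is never the binding constraint here; both reduce to the inequality $h_n\ge r_n$, which is guaranteed by the standing hypothesis $r_n^2/h_n\to 0$. Everything else is a direct bookkeeping consequence of the structural lemmas, so I expect no real obstacle.
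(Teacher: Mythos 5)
Your proposal is correct and takes essentially the same route as the paper: the paper's proof of Lemma~\ref{cf1} is a one-line appeal to Proposition~\ref{thing}, counting one word of form $(i)_n$ and one word of form $(ii)_n$ for each $0 \leq i < \ell - c_n$, which is exactly the bookkeeping you carry out (via Cassaigne's identity, the exclusion of form $(iii)_n$ in this length range, and the observation that the upper endpoint of the admissible range for form $(ii)_n$ is not binding). Your additional checks of distinctness and of the inequality $r_n \leq h_n$ are details the paper leaves implicit in Lemma~\ref{L:2s} and Proposition~\ref{thing}, so there is no substantive difference in approach.
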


\begin{proof}
	Proposition \ref{thing} gives one word of form $(\ref{p1})_{n}$ and one of form $(\ref{p2})_{n}$ for each $0 \leq i < \ell - c_{n}$.
\end{proof}

\begin{lemma} \label{cf2}
	If $c_n+r_n \leq \ell \leq h_n+2c_n+1$ then $p(\ell+1)-p(\ell)=r_n+1$.
\end{lemma}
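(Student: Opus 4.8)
The plan is to count, via Cassaigne's lemma, the number of right-special words of each length $\ell$ in the stated range $c_n + r_n \leq \ell \leq h_n + 2c_n + 1$, using the classification from Lemma~\ref{L:2s} and the length restrictions from Proposition~\ref{thing}. Since $|\mathcal{L}^{RS}_\ell(X)| = p(\ell+1) - p(\ell)$, it suffices to show that for each such $\ell$ there are exactly $r_n + 1$ right-special words of length $\ell$. First I would note that for $\ell$ in this range, the unique relevant index in Lemma~\ref{L:2s} is indeed $n$: we have $c_n \leq \ell$ clearly, and $\ell \leq h_n + 2c_n + 1 < m_n \leq c_{n+1}$, so $1^{c_{n+1}}$ is not a subword and $1^{c_n}$ is.

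Next I would tally the three forms from Lemma~\ref{L:2s}. Form $(i)_n$ contributes the single word $1^\ell$, valid for all $\ell < c_{n+1}$, hence for our range: that is $1$ word. Form $(ii)_n$ contributes, for each $0 \leq i < r_n$, at most one right-special word, and by part $(\ref{p2})$ of Proposition~\ref{thing} such a word exists precisely when $c_n + i < \ell \leq h_n + 2c_n + 2i - 1$. The left inequality $c_n + i < \ell$ holds for all $i < r_n$ because $\ell \geq c_n + r_n$; the right inequality $\ell \leq h_n + 2c_n + 2i - 1$ holds for all $i \geq 0$ because $\ell \leq h_n + 2c_n + 1 \leq h_n + 2c_n + 2i - 1$ would need $i \geq 1$ — so I must check $i = 0$ separately. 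For $i = 0$ the bound reads $\ell \leq h_n + 2c_n - 1$, which fails when $\ell \in \{h_n + 2c_n, h_n + 2c_n + 1\}$; but in exactly that regime form $(iii)_n$ switches on. So the cleanest bookkeeping is: for $\ell \leq h_n + 2c_n - 1$, all $r_n$ values of $i$ in form $(ii)_n$ give a word and form $(iii)_n$ gives none (since $(\ref{pspec})$ requires $\ell \geq h_n + 2c_n$), totaling $1 + r_n$; for $\ell \in \{h_n + 2c_n, h_n+2c_n+1\}$, form $(ii)_n$ loses the $i=0$ word (giving $r_n - 1$ words) but form $(iii)_n$ gains exactly one word (valid since $h_n + 2c_n \leq \ell < h_n + 2c_n + r_n$, using $r_n \geq 2$ which follows from $r_n \geq 1$ nondecreasing to infinity — actually one should just note $\ell < h_n + 2c_n + r_n$ holds as $\ell \leq h_n+2c_n+1$ and $r_n \geq 2$ eventually; for small $n$ with $r_n = 1$ the range $c_n + r_n \leq \ell \leq h_n + 2c_n + 1$ is handled the same way once one checks $h_n+2c_n+1 < h_n+2c_n+r_n$ fails, so the $i=0$ loss is instead compensated differently — but since $r_n \to \infty$ and the lemma is used asymptotically this edge case is cosmetic). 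In all cases the total is $r_n + 1$.

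Then the conclusion is immediate: by Cassaigne's lemma $p(\ell+1) - p(\ell) = |\mathcal{L}^{RS}_\ell(X)| = r_n + 1$ for every $\ell$ with $c_n + r_n \leq \ell \leq h_n + 2c_n + 1$.

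The main obstacle is the careful handedness of the endpoint cases $\ell = h_n + 2c_n$ and $\ell = h_n + 2c_n + 1$, where one right-special word of form $(ii)_n$ (the one with $i = 0$, a suffix of $1^{c_n - 1}B_n 1^{c_n}$) disappears but is replaced by the first word of form $(iii)_n$ (a suffix of $1^{c_n + r_n - 1} B_n 1^{c_n}$). One must verify these two words are genuinely distinct (they are: the $i=0$ form-$(ii)_n$ word has length at most $h_n + 2c_n - 1 < h_n+2c_n \leq \ell$, so at the relevant lengths only the form-$(iii)_n$ word is present) and that the count stays balanced at $r_n + 1$ across the transition. Once the length windows from Proposition~\ref{thing} are lined up correctly, the argument is a direct interval-chasing computation with no analytic content.
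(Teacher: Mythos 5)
Your proof is correct and takes essentially the same route as the paper's: count the right-special words of each length $\ell$ in the range using Proposition \ref{thing} and Cassaigne's lemma. If anything, your bookkeeping at the endpoint lengths $\ell = h_n+2c_n$ and $\ell = h_n+2c_n+1$ — where the $i=0$ word of form $(\ref{p2})_n$ is no longer present and is exactly compensated by the word of form $(\ref{pspec})_n$ — is more careful than the paper's one-line proof, which credits all $r_n$ values of $i$ to form $(\ref{p2})_n$ throughout the range even though the total of $r_n+1$ is what actually matters.
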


\begin{proof}
	Proposition \ref{thing} gives one word of form $(\ref{p1})_{n}$ and one for each $0 \leq i < r_{n}$ of form $(\ref{p2})_{n}$.
\end{proof}

\begin{lemma}\label{cf3.1}
	If $h_n+2c_n+1 < \ell \leq h_n+2c_n+r_n-1$ then $p(\ell + 1) - p(\ell) = r_n - \lceil \frac{1}{2}(\ell-(h_n+2c_n+1)) \rceil + 1$.
\end{lemma}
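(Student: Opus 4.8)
The plan is to count, for a given length $\ell$ in the stated range, exactly how many right-special words of length $\ell$ there are, using Lemma~\ref{L:2s} and Proposition~\ref{thing}, and then invoke Cassaigne's lemma in its one-step form $p(\ell+1)-p(\ell) = |\mathcal{L}^{RS}_\ell(T)|$. Write $\ell = h_n + 2c_n + 1 + t$ where $1 \leq t \leq r_n - 2$, so that we are squarely in the regime where form $(\ref{Lt-0})_n$ is no longer available (since $\ell \geq h_n + 2c_n + 2 > c_{n+1}$ is false — actually $\ell < h_n + 2c_n + r_n \leq m_n \leq c_{n+1}$, so $1^\ell$ is \emph{not} right-special here because $\ell \geq c_{n+1}$ fails; rather $1^\ell$ is of form $(\ref{Lt-0})_n$ only for $\ell < c_{n+1}$, and one must check whether $\ell$ still lies below $c_{n+1}$ — it does, so the subtlety is instead that $1^\ell$ ceased being special at the \emph{bottom}, namely form $(i)_n$ requires $c_n \le \ell$, which holds; so actually the all-ones word IS still present). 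Let me restate: the count has three contributions — the all-ones word of form $(\ref{Lt-0})_n$, the words of form $(\ref{Lt-2})_n$ indexed by $0 \leq i < r_n$, and the word of form $(\ref{Lt-3})_n$ — and I would tally each.

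First I would handle form $(\ref{Lt-2})_n$: by part $(\ref{p2})$ of Proposition~\ref{thing}, there is such a word for index $i$ precisely when $c_n + i < \ell \leq h_n + 2c_n + 2i - 1$, i.e.\ when $2i \geq \ell - h_n - 2c_n + 1 = t + 2$, i.e.\ $i \geq \lceil (t+2)/2 \rceil = 1 + \lceil t/2 \rceil = 1 + \lceil \tfrac12(\ell - (h_n+2c_n+1))\rceil$. Since also $i < r_n$ and the lower constraint $c_n + i < \ell$ is automatically satisfied in this range, the number of valid indices $i$ is $r_n - 1 - \lceil \tfrac12(\ell-(h_n+2c_n+1))\rceil$. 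Next, form $(\ref{Lt-3})_n$: by part $(\ref{pspec})$, the word exists iff $h_n + 2c_n \leq \ell < h_n + 2c_n + r_n$, which holds throughout our range, contributing $1$. Finally form $(\ref{Lt-0})_n$: $1^\ell$ is right-special iff $c_n \leq \ell < c_{n+1}$; since $\ell < m_n \leq c_{n+1}$ and $\ell > h_n \geq c_n$, this contributes $1$. Summing: $|\mathcal{L}^{RS}_\ell(T)| = \big(r_n - 1 - \lceil\tfrac12(\ell-(h_n+2c_n+1))\rceil\big) + 1 + 1 = r_n + 1 - \lceil\tfrac12(\ell-(h_n+2c_n+1))\rceil$, which is exactly the claimed value of $p(\ell+1)-p(\ell)$.

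The main obstacle I anticipate is bookkeeping at the boundary of the range: one must verify that none of the forms overlap (handled by the length restrictions built into Lemma~\ref{L:2s}, in particular that form $(\ref{Lt-3})_n$ requires $\len{w} \geq h_n + 2c_n$ and form $(\ref{Lt-2})_n$ requires $\len{w} > c_n + i$), that the uniqueness clause of Proposition~\ref{thing} genuinely gives \emph{at most one} word of each form and each index value (so we are counting forms, not words within a form), and that the ceiling arithmetic converting $h_n + 2c_n + 2i - 1 \geq \ell$ into a lower bound on $i$ is done with the correct rounding — the off-by-one risk there is the only real pitfall. I would also double-check the endpoint $\ell = h_n + 2c_n + 1$ is \emph{excluded} (it belongs to Lemma~\ref{cf2}) and $\ell = h_n + 2c_n + r_n - 1$ is the last value before form $(\ref{Lt-3})_n$ disappears, confirming the range is exactly as stated. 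All of this follows mechanically once the index count for form $(\ref{Lt-2})_n$ is pinned down.
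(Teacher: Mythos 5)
Your argument is correct and is essentially the paper's own proof: both count one word of form $(i)_n$, one of form $(iii)_n$, and, via the same ceiling computation extracted from Proposition \ref{thing}, exactly $r_n-1-\lceil\tfrac{1}{2}(\ell-(h_n+2c_n+1))\rceil$ words of form $(ii)_n$, summing to the stated value. The only difference is cosmetic (your substitution $t=\ell-(h_n+2c_n+1)$ versus the paper's $x$, plus your self-correcting aside about $1^\ell$, which lands on the right conclusion), so no further comparison is needed.
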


\begin{proof}
	Proposition \ref{thing} gives one word of form $(\ref{p1})_{n}$, one word of form $(\ref{pspec})_{n}$ and, for $0 \leq i < r_{n}$, one of form $(\ref{p2})$ for $0 \leq i < r_{n}$ only if $\ell \leq h_{n} + 2c_{n} + 2i - 1$ so only when $x = \ell - h_{n} - 2c_{n} - 1 \leq 2i - 2$ so only when $i \geq \lceil (x+2)/2 \rceil$.  This gives exactly $r_{n}-1 - \lceil x/2 \rceil$ words of form $(\ref{p2})_{n}$.
\end{proof}

\begin{lemma}\label{cf3.2}
	If $h_n+2c_n+r_n \leq \ell \leq h_n+2c_n+2r_n-3$ then $p(\ell + 1) - p(\ell) = r_n - \lceil\frac{1}{2}(\ell-(h_n+2c_n+1)) \rceil$.
\end{lemma}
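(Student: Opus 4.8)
The plan is to run the same direct count of right-special words of length $\ell$ that proves Lemmas \ref{cf1}--\ref{cf3.1}, using Cassaigne's lemma to identify $p(\ell+1)-p(\ell)$ with $|\mathcal{L}^{RS}_\ell(T)|$, the number of right-special words of length $\ell$. First I would note that for every $\ell$ in the stated interval the integer $n$ furnished by Lemma \ref{L:2s} is exactly the one indexing $h_n,c_n,r_n$ here, since $c_n \le \ell < m_n \le c_{n+1}$ and Proposition \ref{thing} shows each length is governed by a unique such $n$. (If $r_n < 3$ the interval is empty and there is nothing to prove, so one may assume $r_n \ge 3$.)

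Next I would tally the three forms of Lemma \ref{L:2s}. Form $(\ref{Lt-0})_n$ contributes exactly the word $1^\ell$, which is right-special because $\ell \le h_n+2c_n+2r_n-3 < m_n \le c_{n+1}$. Form $(\ref{Lt-3})_n$ contributes nothing: by Proposition \ref{thing}$(\ref{pspec})_n$ a word of this form exists only for $\ell < h_n+2c_n+r_n$, whereas the hypothesis gives $\ell \ge h_n+2c_n+r_n$. This disappearance of the ``special'' form is the sole difference from Lemma \ref{cf3.1}, and it is precisely what lowers the count by one. For form $(\ref{Lt-2})_n$ I would argue as in Lemma \ref{cf3.1}: setting $x = \ell-(h_n+2c_n+1)$, the condition $c_n+i < \ell$ holds automatically throughout this interval because $\ell \ge h_n+2c_n+r_n > c_n+i$ for $i<r_n$, while the condition $\ell \le h_n+2c_n+2i-1$ rearranges to $i \ge \lceil x/2\rceil + 1$; hence there are exactly $r_n-1-\lceil x/2\rceil$ admissible values of $i$, giving that many words of this form.

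Summing, $|\mathcal{L}^{RS}_\ell(T)| = 1 + 0 + (r_n-1-\lceil x/2\rceil) = r_n - \lceil \frac{1}{2}(\ell-(h_n+2c_n+1))\rceil$, which is the claimed value of $p(\ell+1)-p(\ell)$. I do not anticipate a real obstacle; the content is the enumeration already organized in Proposition \ref{thing}, and the only places warranting a moment's care are the two endpoints of the interval, where one must confirm that $1^\ell$ remains right-special (guaranteed by $c_{n+1}\ge m_n$) and that the form-$(\ref{Lt-3})_n$ word has genuinely dropped out — both immediate from the definitions.
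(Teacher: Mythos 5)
Your proposal is correct and follows essentially the same route as the paper, which proves this lemma by noting that the count from Lemma \ref{cf3.1} carries over verbatim except that the form-$(\ref{pspec})_{n}$ word no longer appears (Proposition \ref{thing} restricts it to $\ell < h_n+2c_n+r_n$), leaving $1 + (r_n - 1 - \lceil x/2\rceil) = r_n - \lceil\frac{1}{2}(\ell-(h_n+2c_n+1))\rceil$. Your additional remarks about the empty interval when $r_n<3$ and the automatic satisfaction of $c_n+i<\ell$ are correct but not needed beyond what Proposition \ref{thing} already encodes.
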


\begin{proof}
	The proof of Lemma \ref{cf3.1} holds here except we do not get a word of form $(\ref{pspec})_{n}$.
\end{proof}

\begin{lemma}\label{cf4}
	If $m_{n} \leq \ell < c_{n+1}$, then $p(\ell+1)-p(\ell)=1$.
\end{lemma}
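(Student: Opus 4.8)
The plan is to invoke the basic lemma of Cassaigne, which by taking $n=\ell+1$ and $m=\ell$ gives $p(\ell+1)-p(\ell)=|\mathcal{L}^{RS}_\ell(T)|$, so the statement reduces to showing that for every $\ell$ with $m_n\le \ell<c_{n+1}$ there is exactly one right-special word of length $\ell$, namely $1^\ell$. I would then read off this count from Lemma~\ref{L:2s} (which says every right-special word is of one of three forms, tagged by an index $k$) together with Proposition~\ref{thing} (which localizes the lengths at which each form can occur).

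First I would record two elementary consequences of the definition of an extremely elevated staircase. The requirement $c_{n+1}\ge h_n+2c_n+2r_n-2$ says exactly $c_{n+1}\ge m_n$, and since $h_n\ge 1$ and $r_n\ge 1$ it also forces $c_{n+1}\ge 2c_n+1>c_n$, so $\{c_n\}$ is strictly increasing. Hence for the lengths under consideration we have $c_n\le m_n\le \ell<c_{n+1}$.

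Now I would classify the right-special words of length $\ell$. By Lemma~\ref{L:2s}, any right-special $w$ carries an index $k=k(w)$ and is of form $(i)_k$, $(ii)_k$, or $(iii)_k$. By Proposition~\ref{thing}, a word of form $(i)_k$ occurs only for $c_k\le \ell<c_{k+1}$; since $\{c_k\}$ is strictly increasing and $c_n\le \ell<c_{n+1}$, this pins down $k=n$, and the unique candidate is $1^\ell$, which is right-special by Lemma~\ref{twosucc1stcase}. A word of form $(ii)_k$ with parameter $i$ occurs only for $c_k+i<\ell\le h_k+2c_k+2i-1\le h_k+2c_k+2(r_k-1)-1=m_k-1$, hence $\ell\in[c_k,c_{k+1})$, forcing $k=n$ and then $\ell\le m_n-1<m_n\le \ell$, a contradiction. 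Similarly a word of form $(iii)_k$ occurs only for $h_k+2c_k\le \ell<h_k+2c_k+r_k\le m_k$, again forcing $k=n$ and $\ell<m_n\le \ell$, impossible. (The step $h_k+2c_k+r_k\le m_k$ uses $r_k\ge 2$, which holds for all but finitely many $k$ since $r_k\to\infty$.) As $1^\ell$ is distinct from any word of the other two forms, each of which contains a $0$, there is exactly one right-special word of length $\ell$, and Cassaigne's lemma gives $p(\ell+1)-p(\ell)=1$.

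The only point needing a little care is the bookkeeping above: one must verify that forms $(ii)_n$ and $(iii)_n$ cannot reach length $m_n$, which is precisely where the definition $m_n=h_n+2c_n+2r_n-2$ and the upper endpoints supplied by Proposition~\ref{thing} enter. Everything else is immediate from the monotonicity of $\{c_n\}$ and the inequality $c_{n+1}\ge m_n$.
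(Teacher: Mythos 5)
Your proof is correct and follows essentially the same route as the paper's, which simply reads the count off from Proposition \ref{thing} (via Cassaigne's lemma): for $\ell \geq m_n$ only the word $1^{\ell}$ survives. Your extra bookkeeping verifying that forms $(ii)_n$ and $(iii)_n$ cannot reach length $m_n$ (including the observation that the form-$(iii)_n$ exclusion needs $r_n \geq 2$) is a faithful elaboration of what the paper leaves implicit.
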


\begin{proof}
	 Proposition \ref{thing} gives only the word $1^{\ell}$ of length $\ell \geq m_{n}$.
\end{proof}

\subsection{Counting words in the language of extremely elevated staircases}

\begin{proposition} \label{C:all} 
	If $T$ is an extremely elevated staircase transformation and $c_n<q\leq c_{n+1}$, then
	\[
	p(q) \leq p(c_{n}) + (q - c_{n})(r_{n}+1) \leq q(r_n+1).
	\]
\end{proposition}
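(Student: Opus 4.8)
The plan is to prove the bound by summing the increments $p(\ell+1) - p(\ell)$ computed in Lemmas \ref{cf1} through \ref{cf4}. By Cassaigne's lemma (with $m = c_n$ and $n = q$), we have
\[
p(q) = p(c_n) + \sum_{\ell = c_n}^{q-1} |\mathcal{L}^{RS}_\ell(T)|,
\]
so it suffices to show that each increment $|\mathcal{L}^{RS}_\ell(T)| = p(\ell+1) - p(\ell)$ is at most $r_n + 1$ for every $\ell$ in the range $c_n \leq \ell < c_{n+1}$. Once that is established, the first inequality follows immediately since there are $q - c_n$ terms in the sum, and the second inequality follows from $p(c_n) \leq c_n(r_{n-1}+1) \leq c_n(r_n+1)$ together with $(q - c_n)(r_n+1) + c_n(r_n+1) = q(r_n+1)$ — or, even more simply, from the Hedlund–Morse bound $p(c_n) \le c_n \cdot 1 + \text{(increments)}$; in any case one argues $p(c_n) \le c_n(r_n+1)$ by an easy induction or by applying the first inequality with $q = c_n$ and the previous value of $n$.

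The core of the argument is the case analysis on $\ell$, which the preceding lemmas have already partitioned. For $c_n \leq \ell < c_n + r_n$, Lemma \ref{cf1} gives increment $(\ell - c_n) + 1 \leq (r_n - 1) + 1 = r_n \leq r_n + 1$. For $c_n + r_n \leq \ell \leq h_n + 2c_n + 1$, Lemma \ref{cf2} gives increment exactly $r_n + 1$. For $h_n + 2c_n + 1 < \ell \leq h_n + 2c_n + r_n - 1$, Lemma \ref{cf3.1} gives $r_n - \lceil \tfrac12(\ell - (h_n+2c_n+1))\rceil + 1 \leq r_n + 1$ since the ceiling term is nonnegative. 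For $h_n + 2c_n + r_n \leq \ell \leq h_n + 2c_n + 2r_n - 3$, Lemma \ref{cf3.2} gives an even smaller value. And for $m_n = h_n + 2c_n + 2r_n - 2 \leq \ell < c_{n+1}$, Lemma \ref{cf4} gives increment $1$. These ranges cover all $\ell$ with $c_n \leq \ell < c_{n+1}$ (using $c_{n+1} \geq m_n$ from the definition), so every increment is bounded by $r_n + 1$, completing the proof.

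The main obstacle — really the only point requiring care — is verifying that the five ranges in the lemmas exhaust the interval $[c_n, c_{n+1})$ with no gaps and no overlaps, and in particular confirming the boundary cases line up (e.g.\ that $h_n + 2c_n + 1$ is handled by Lemma \ref{cf2} and $h_n+2c_n+2$ onward by Lemma \ref{cf3.1}, and that Lemma \ref{cf3.2}'s range connects to $m_n$). This is the kind of off-by-one bookkeeping that is routine but must be done correctly; it ultimately rests on the length restrictions recorded in Proposition \ref{thing} and on the standing hypothesis $c_{n+1} \geq h_n + 2c_n + 2r_n - 2$. Since no individual increment ever exceeds $r_n + 1$, the stated bound is in fact somewhat lossy, but it is clean and suffices for the complexity estimates to follow.
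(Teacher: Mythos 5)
Your argument is correct and is essentially the paper's own proof: both bound every increment $p(\ell+1)-p(\ell)$ by $r_m+1$ on $[c_m,c_{m+1})$ via Lemmas \ref{cf1}--\ref{cf4}, telescope from $c_n$ to $q$, and then control $p(c_n)$ using the monotonicity of the cut sequence. The only cosmetic difference is that the paper bounds $p(c_n)$ by summing all increments from $\ell=1$ to $c_n$ directly rather than by induction on $n$, which amounts to the same thing.
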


\begin{proof} 
	From Lemmas \ref{cf1}--\ref{cf4}, for $c_m\leq\ell < c_{m+1}$ it always holds that $p(\ell+1)-p(\ell) \leq r_m+1$ so
	\[
	p(q) = p(c_{n}) + \sum_{\ell=c_{n}}^{q-1} (p(\ell+1)-p(\ell)) \leq p(c_{n}) + (q - c_{n})(r_{n} + 1)
	\]
and, since $r_m \leq r_n$ for all $m\leq n$,,
	\[
	p(c_{n}) = \sum \limits_{\ell=1}^{c_{n}} \left(p(\ell+1)-p(\ell)\right) \leq \sum \limits_{\ell=1}^{c_{n}} (r_n+1) = c_{n}(r_n+1). \qedhere
	\]
\end{proof}

\begin{proposition}\label{P:lowerbound}
For an extremely elevated staircase transformation,
$
p(m_{n}) \geq h_{n+1}.
$
\end{proposition}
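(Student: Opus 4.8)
The plan is to apply the Cassaigne-style telescoping identity
\[
p(m_n) = p(c_n) + \sum_{\ell = c_n}^{m_n - 1}\big(p(\ell+1) - p(\ell)\big)
\]
and substitute the exact values of the consecutive differences computed in Lemmas~\ref{cf1}--\ref{cf3.2}. The first thing I would check is that the four index ranges occurring in those lemmas, namely $[c_n, c_n + r_n - 1]$ (Lemma~\ref{cf1}), $[c_n + r_n, h_n + 2c_n + 1]$ (Lemma~\ref{cf2}), $[h_n + 2c_n + 2, h_n + 2c_n + r_n - 1]$ (Lemma~\ref{cf3.1}), and $[h_n + 2c_n + r_n, h_n + 2c_n + 2r_n - 3]$ (Lemma~\ref{cf3.2}), tile the set $\{c_n, c_n+1, \ldots, m_n - 1\}$ with neither gaps nor overlaps; this is immediate from $m_n = h_n + 2c_n + 2r_n - 2$, with the caveat that for the finitely many $n$ with $r_n \le 2$ the last one or two ranges are empty, which does no harm.

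Next I would evaluate the four partial sums. Over the first range the differences run through $1, 2, \ldots, r_n$, contributing $\frac{1}{2}r_n(r_n+1)$. Over the second range the difference is constantly $r_n + 1$ across $h_n + c_n - r_n + 2$ terms, contributing $(r_n+1)(h_n + c_n - r_n + 2)$. Over the third and fourth ranges, setting $x = \ell - (h_n + 2c_n + 1)$, the differences are $r_n + 1 - \lceil x/2 \rceil$ for $1 \le x \le r_n - 2$ and $r_n - \lceil x/2 \rceil$ for $r_n - 1 \le x \le 2r_n - 4$, so together they contribute $(r_n-2)(r_n+1) + (r_n-2)r_n - \sum_{x=1}^{2(r_n-2)} \lceil x/2 \rceil = (r_n-2)(r_n+2)$, using $\sum_{x=1}^{2k}\lceil x/2\rceil = k(k+1)$ with $k = r_n - 2$. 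Adding the four contributions, the $(r_n+1)(r_n-2)$ cross-terms cancel and one gets
\[
\sum_{\ell = c_n}^{m_n-1}\big(p(\ell+1) - p(\ell)\big) = (r_n+1)(h_n+c_n) + \tfrac{1}{2}r_n(r_n+1) + r_n - 2,
\]
and comparing with $h_{n+1} = (r_n+1)h_n + r_n c_n + \tfrac{1}{2}r_n(r_n-1)$ shows this sum equals exactly $h_{n+1} + c_n + 2r_n - 2$. Combining the two displays gives the exact identity $p(m_n) = p(c_n) + h_{n+1} + (c_n + 2r_n - 2)$, whence $p(m_n) \ge h_{n+1}$ (in fact strictly), since $p(c_n) \ge 1$ and $c_n, r_n \ge 1$.

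I expect the only real work to be the bookkeeping in the middle step: confirming the four ranges tile $\{c_n, \ldots, m_n - 1\}$, evaluating the two ceiling-sums, and carrying out the routine but slightly fiddly cancellation; there is no conceptual obstacle once Lemmas~\ref{cf1}--\ref{cf3.2} are available. If one wished to avoid Lemmas~\ref{cf3.1}--\ref{cf3.2}, a coarser route also works: since every consecutive difference is at least $1$, $p(m_n) \ge p(c_n) + \frac{1}{2}r_n(r_n+1) + (r_n+1)(h_n + c_n - r_n + 2)$, and then a downward induction using $c_n \ge m_{n-1}$ (the defining inequality on the elevating sequence, re-indexed) together with monotonicity of $p$ gives $p(c_n) \ge p(m_{n-1}) \ge h_n$; since $r_n^2/h_n \to 0$ this already beats $h_{n+1}$ for all large $n$. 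The exact computation is cleaner and unconditional, so that is the route I would write up.
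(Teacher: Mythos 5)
Your proof is correct and follows essentially the same route as the paper: telescoping $p(m_n)-p(c_n)$ over the ranges covered by Lemmas~\ref{cf1}--\ref{cf3.2}, summing the exact increments (the three contributions $\tfrac12 r_n(r_n+1)$, $(r_n+1)(h_n+c_n-r_n+2)$, and $r_n^2-4$ match the paper's), and comparing with $h_{n+1}=(r_n+1)h_n+r_nc_n+\tfrac12 r_n(r_n-1)$. Your explicit identity $p(m_n)=p(c_n)+h_{n+1}+c_n+2r_n-2$ and the remark about small $r_n$ are slightly more careful than the paper's write-up, but the argument is the same.
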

\begin{proof}
By Lemma \ref{cf1}, $p(c_{n}+r_{n}) - p(c_{n}) = \frac{1}{2}r_{n}(r_{n}+1)$.  There are $r_{n}-2+\sum_{x=0}^{2(r_{n}-2)} (r_{n} - \lceil\frac{x}{2}\rceil)$ words from Lemmas \ref{cf3.1} and \ref{cf3.2} of lengths $h_{n} + 2_{n} + 2 \leq \ell \leq h_{n} + 2c_{n} + 2r_{n} - 3$, therefore $p(h_{n}+2c_{n}+2r_{n}-2) - p(h_{n}+2c_{n}+1) = r_{n}^{2}-4$.  By Lemma \ref{cf2}, $p(h_{n}+2c_{n}+1) - p(c_{n}+r_{n}) = (r_{n}+1)(h_{n}+c_{n}-r_{n}+2)$ so
\[
p(h_{n} + 2c_{n}+2r_{n}-2) \geq \frac{1}{2}r_{n}(r_{n}+1) + (r_{n}+1)(h_{n}+c_{n}-r_{n}+2) + r_{n}^{2} - 4
\geq h_{n+1}. \qedhere
\]
\end{proof}

\section{Mixing rank-one subshifts with low complexity}

\begin{theorem}\label{T:main1}
\theoremonetext
\end{theorem}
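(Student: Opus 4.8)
Given $f$ with $f(q)/q$ nondecreasing and $\sum 1/f(q) < \infty$, I want to build an extremely elevated staircase whose cut sequence $\{r_n\}$ and elevating sequence $\{c_n\}$ grow so quickly that the complexity, which by Proposition~\ref{C:all} satisfies $p(q) \le q(r_n+1)$ on the block $c_n < q \le c_{n+1}$, is forced to be $o(f(q))$. The key tension is that $p(q) \le q(r_n+1)$ is only useful when $r_n$ is small relative to $f(q)/q$, and $f(q)/q$ is smallest at the \emph{left} end of the block, i.e.\ at $q = c_n+1$. So the design principle is: once $r_n$ is chosen, pick $c_{n+1}$ enormous — in particular large enough that $f(c_{n+1})/c_{n+1}$ (which is at least $f(q)/q$ throughout the block since $f(q)/q$ is nondecreasing, wait — I need it the other way) dwarfs $r_{n+1}+1$. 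Let me reorganize: on $c_n < q \le c_{n+1}$ we have $p(q)/f(q) \le q(r_n+1)/f(q) = (r_n+1)/(f(q)/q) \le (r_n+1)/(f(c_n+1)/(c_n+1))$ using monotonicity of $f(q)/q$. So it suffices to arrange $(r_n+1)(c_n+1)/f(c_n+1) \to 0$, i.e.\ to choose $c_n$ so large (relative to $r_{n-1}$, which in turn controls the constraints) that $f(c_n+1)/(c_n+1) \gg r_n+1$.

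\textbf{Construction.} I would build the sequences recursively. Suppose $r_1, \dots, r_{n-1}$ and $c_1, \dots, c_{n-1}$ have been chosen, determining $h_n$ via the recursion $h_{n+1} = (r_n+1)h_n + r_n c_n + \frac12 r_n(r_n-1)$. First choose $r_n \ge r_{n-1}$ with $r_n \to \infty$ and slowly enough that $r_n^2/h_n \to 0$ (e.g.\ $r_n = \lfloor \sqrt{\log h_n}\rfloor$ or any sufficiently slow choice; note $h_n \to \infty$ automatically). Then choose $c_n$ to be any integer satisfying simultaneously: (a) $c_n \ge h_{n-1} + 2c_{n-1} + 2r_{n-1} - 2$ (the definitional constraint); (b) $c_n$ large enough that $(r_n+1)(c_n+1)/f(c_n+1) \le 2^{-n}$ — possible because $f(q)/q \to \infty$ (since $f(q)/q$ is nondecreasing and if it were bounded then $\sum 1/f(q) = \sum (1/(f(q)/q)) \cdot (1/q)$ would diverge); (c) $c_n \ge 2^n(c_{n-1} + r_{n-1})$ so that $\sum (c_n + r_n)/h_n < \infty$ — here I use that $h_n$ grows at least like $\prod(r_j+1) \cdot(\text{something})$, and more directly that $h_{n+1} \ge (r_n+1)h_n \ge 2 h_n$ eventually, while controlling $c_n/h_n$; in fact the cleanest route is to demand $c_n \le h_{n-1}$ as well — but wait, that conflicts with (b) potentially. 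Let me instead note $h_n$ already incorporates $r_{n-1}c_{n-1}$, so $h_n \ge r_{n-1} c_{n-1}$, and then choosing $r_n$ \emph{after} $c_{n-1}$ but keeping $\sum (c_n+r_n)/h_n$ convergent requires $h_n$ to outpace $c_n$; this is arranged by making $h_{n+1}$ grow by a factor tending to infinity (via $r_n \to \infty$) so that $c_{n+1}/h_{n+1}$ can be kept summable even with $c_{n+1}$ large — concretely, after fixing $c_{n+1}$ per (a),(b), if $\sum$ is endangered one further enlarges a later $r$ or simply notes $h_{n+1} \ge r_n c_n$ gives $c_n/h_{n+1} \le 1/r_n \to 0$ and one can square this to get summability. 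I will present this bookkeeping carefully but it is routine once the ordering of choices ($r_n$ before $c_n$, each depending on all earlier data) is fixed.

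\textbf{Conclusion and main obstacle.} Once the sequences are built, Theorem~\ref{T:mixing} gives that $T$ is a mixing extremely elevated staircase. For the complexity bound: given any $q$, let $n = n(q)$ be the index with $c_n < q \le c_{n+1}$ (so $n(q) \to \infty$ as $q \to \infty$). Then by Proposition~\ref{C:all},
\[
\frac{p(q)}{f(q)} \le \frac{q(r_n+1)}{f(q)} = \frac{r_n+1}{f(q)/q} \le \frac{r_n+1}{f(c_n+1)/(c_n+1)} = \frac{(r_n+1)(c_n+1)}{f(c_n+1)} \le 2^{-n},
\]
using the monotonicity of $f(q)/q$ in the middle inequality and constraint (b) at the end. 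Since $n = n(q) \to \infty$, we get $\lim_{q\to\infty} p(q)/f(q) = 0$. The main obstacle is purely the simultaneous satisfiability of the recursive constraints — in particular verifying that constraint (b) (which pushes $c_n$ \emph{up}) is compatible with the summability constraint $\sum (c_n+r_n)/h_n < \infty$ (which seems to push $c_n$ \emph{down} relative to $h_n$); the resolution is that $h_n$ itself contains the term $r_{n-1}c_{n-1}$ and grows by an unbounded factor $r_n+1$ each step, so one has enough room, but this needs to be checked with care, and it is the only genuinely nontrivial point.
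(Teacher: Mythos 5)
Your final complexity estimate (bounding $p(q)/f(q)$ by $(r_n+1)(c_n+1)/f(c_n+1)$ on the block $c_n < q \le c_{n+1}$ via Proposition~\ref{C:all} and the monotonicity of $f(q)/q$) is exactly the estimate the paper uses, but your construction of the sequences does not go through, and the gap is precisely the one you flag and then defer as ``routine bookkeeping.'' Your constraint (b) asks that $f(c_n+1)/(c_n+1) \ge 2^n(r_n+1)$, which forces $c_n$ to be enormous; but $h_n$ is completely determined by $r_1,\dots,r_{n-1},c_1,\dots,c_{n-1}$ before $c_n$ is chosen, so inflating $c_n$ inflates $c_n/h_n$ and destroys the required summability $\sum (c_n+r_n)/h_n < \infty$. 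Your proposed resolution ($h_n \ge r_{n-1}c_{n-1}$, heights grow by an unbounded factor) cannot save this, for the following structural reason: nowhere in your argument do you use the hypothesis $\sum 1/f(q) < \infty$ except to deduce $f(q)/q \to \infty$. If your recursion were satisfiable under only that weaker hypothesis, the theorem would follow for $f(q) = q\lceil \log q\rceil$, whose ratio $f(q)/q$ is nondecreasing and unbounded; but Theorem~\ref{T:noqlogq} shows $\limsup p(q)/(q\log q) = \infty$ for \emph{every} extremely elevated staircase, so $\lim p(q)/f(q) = 0$ is impossible for that $f$. Hence your recursive constraints are genuinely unsatisfiable for some $f$ meeting everything you actually invoke, and the missing ingredient is a quantitative use of $\sum 1/f(q) < \infty$.

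The paper resolves the tension by inverting which sequence gets tuned to $f$. It takes the elevating sequence as small as the definition allows, $c_{n+1} = m_n$ (so $c_{n+1}$ is comparable to $h_n$ and $c_{n+1}/h_{n+1}$ is comparable to $1/r_n$), and instead tunes the \emph{cut} sequence: roughly $r_{n+1} = \lceil f(c_{n+1})/(t_n(c_{n+1}-c_n))\rceil$ with $t_n \to \infty$ slowly. Then $1/r_{n+1} \le t_n(c_{n+1}-c_n)/f(c_{n+1}) \le t_n\sum_{\ell = c_n}^{c_{n+1}-1} 1/f(\ell)$, and regrouping this over blocks shows $\sum 1/r_n < \infty$ exactly because $\sum 1/f(q) < \infty$; that in turn yields $\sum c_n/h_n < \infty$. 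The price is one extra preliminary step (replacing $f$ by $\min(f(q), q^{3/2})$ so that $r_n^2/h_n \to 0$ can be verified), and the same final estimate then gives $p(q)/f(q) \lesssim 1/t_{n-1} + q/f(q) \to 0$. To repair your write-up you would need to couple the growth of the sequences to the tail sums of $1/f$ in this manner, rather than to an $f$-independent sequence like $2^{-n}$.
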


\begin{proof}
	The function $g(q) = \min(f(q),q^{3/2})$ is nondecreasing as it is the minimum of two nondecreasing functions and $\frac{g(q)}{q}$ is the minimum of $\frac{f(q)}{q}$ and $q^{1/2}$ so is also nondecreasing.  Replacing $f(q)$ by $g(q)$ if necessary, we may assume that $f(q) \leq q^{3/2}$ for all $q$.
	
	Note that $\frac{f(q)}{q} \to \infty$ since it is nondecreasing and if $f(q) \leq Cq$ then $\sum \frac{1}{f(q)} \geq (1/C) \sum \frac{1}{q} = \infty$.
	
	Set $x_{1} = 1$ and choose $x_{t}$ such that $\sum_{q=x_{t}}^{\infty} \frac{1}{f(q)} \leq t^{-3}$ and$\frac{f(q)}{q} \geq t^{2}$ for $q \geq x_{t}$.

	Set $r_{1} = 2$ and $c_{1} = 1$.  Given $r_{n}$ and $c_{n}$, let $t_{n}$ such that $x_{t_{n}} \leq c_{n} < x_{t_{n}+1}$ and	
	set
	\[
	c_{n+1} = m_{n} \text{ and }r_{n+1} = \ceil[\Big]{\frac{f(c_{n+1})}{t_{n}(c_{n+1} - c_{n})}}.
	\]
	Since $r_{n+1} \geq \frac{f(c_{n+1})}{c_{n+1}}\cdot \frac{1}{t_{n}} \geq \frac{t_{n}^{2}}{t_{n}} \to \infty$, we have that $r_{n}$ nondecreasing to $\infty$.  
	
	Let $n_{t} = \inf \{ n : c_{n} \geq x_{t} \}$ so that $t_{n} = t$ for $n_{t} \leq n < n_{t+1}$.
	Since $f$ is increasing,
	\begin{align*}
	\sum_{n=1}^{\infty} \frac{1}{r_{n}} &\leq \sum_{n=1}^{\infty} \frac{1}{\frac{f(c_{n})}{t_{n-1}(c_{n} - c_{n-1})}} = \sum_{n=1}^{\infty} \frac{t_{n-1}(c_{n} - c_{n-1})}{f(c_{n})}= \sum_{n=1}^{\infty} \sum_{\ell=c_{n-1}}^{c_{n}-1} \frac{t_{n-1}}{f(c_{n})} \\
	&\leq \sum_{n=1}^{\infty} \sum_{\ell=c_{n-1}}^{c_{n}-1} \frac{t_{n-1}}{f(\ell)} = \sum_{t=1}^{\infty} \sum_{n=n_{t}+1}^{n_{t+1}} \sum_{\ell=c_{n-1}}^{c_{n}+1} \frac{t}{f(\ell)}
	= \sum_{t=1}^{\infty} \sum_{\ell=c_{n_{t}}}^{c_{n_{t+1}}-1} \frac{t}{f(\ell)} \\
	&\leq \sum_{t=1}^{\infty} t \sum_{\ell=x_{t}}^{\infty} \frac{1}{f(\ell)}
	\leq \sum_{t=1}^{\infty} \frac{t}{t^{3}} < \infty.
	\end{align*}
	
	Since $h_{n+1} \geq r_{n}(h_{n} + c_{n})$ and $2r_{n} \leq h_{n}$,
\[
\sum_{n} \frac{c_{n+1}}{h_{n+1}} \leq \sum_{n} \frac{h_{n} + 2c_{n} + 2r_{n} - 2}{r_{n}(h_{n}+c_{n})} 
\leq \sum_{n} \frac{2(h_{n}+c_{n})}{r_{n}(h_{n} + c_{n})} = 2\sum_{n} \frac{1}{r_{n}}
\]
	and therefore $\sum \frac{c_{n}}{h_{n}} < \infty$.
		Since $f(q) \leq q^{3/2}$,
	\begin{align*}
	\frac{r_{n}^{2}}{h_{n}} &\leq \frac{(f(c_{n}))^{2}}{h_{n}t_{n-1}^{2}(c_{n} - c_{n-1})^{2}} \leq \frac{(c_{n}^{3/2})^{2}}{h_{n}c_{n}^{2}}\Big{(}\frac{c_{n}}{c_{n} - c_{n-1}}\Big{)}^{2} \frac{1}{t_{n-1}^{2}} = \frac{c_{n}}{h_{n}} \Big{(}\frac{1}{1 - \frac{c_{n-1}}{c_{n}}}\Big{)}^{2} \frac{1}{t_{n-1}^{2}} \to 0.
	\end{align*}
	as $\frac{c_{n-1}}{c_{n}} \leq \frac{c_{n-1}}{h_{n-1}} \to 0$.
	Then the transformation $T$ with cut sequence $\{ r_{n} \}$ and elevating sequence $\{ c_{n} \}$ satisfies all the conditions required to be an extremely elevated staircase so Theorem \ref{T:mixing} gives that $T$ is mixing on a finite measure space.
		
	Given $q$, choose $n$ such that $c_{n} < q \leq c_{n+1}$.
	Using the fact that $\frac{f(q)}{q}$ is nondecreasing (and so $q > c_{n}$ implies $\frac{f(c_{n})}{c_{n}} \leq \frac{f(q)}{q}$) and tends to infinity, by Proposition \ref{C:all},
	\begin{align*}
	\frac{p(q)}{f(q)} &\leq \frac{q(r_{n} + 1)}{f(q)}
	\leq \frac{q}{f(q)} \Big{(}\frac{f(c_{n})}{t_{n-1}(c_{n} - c_{n-1})} + 2\Big{)}
	=  \frac{q}{f(q)} \Big{(}\frac{1}{t_{n-1}}  \frac{f(c_{n})}{c_{n}}~\frac{1}{1 - \frac{c_{n-1}}{c_{n}}} + 2 \Big{)} \\
	&\leq \frac{q}{f(q)} \Big{(} \frac{1}{t_{n-1}}\frac{f(q)}{q}~\frac{1}{1 - \frac{c_{n-1}}{c_{n}}} + 2 \Big{)}
	= \frac{1}{t_{n-1}} \cdot \frac{1}{1 - \frac{c_{n-1}}{c_{n}}} + 2\frac{q}{f(q)} \to 0. \qedhere
	\end{align*}
\end{proof}

\subsection{Even lower complexity}

It is natural to wonder whether the hypothesis of Theorem \ref{T:main1} is necessary.  This is, however, not the case: there exist mixing elevated rank ones with even lower complexity.

\begin{theorem}\label{T:evenlower}
\theoremtwotext
\end{theorem}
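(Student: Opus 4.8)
The plan is to construct an extremely elevated staircase by choosing the cut sequence $\{r_n\}$ to grow extremely slowly, so slowly that the resulting complexity function $p(q)$ has a divergent sum of reciprocals. The key structural input is Proposition~\ref{C:all}, which says $p(q) \leq q(r_n+1)$ for $c_n < q \leq c_{n+1}$, together with the lower bound $p(m_n) \geq h_{n+1}$ from Proposition~\ref{P:lowerbound}; combined with the counting lemmas, these show that $p(q)$ is roughly $q \cdot r_n$ on the block $(c_n, c_{n+1}]$. So to make $\sum 1/p(q)$ diverge, one wants $r_n$ to be as close to constant as the definitional constraints allow — but the constraints $r_n \to \infty$, $r_n^2/h_n \to 0$, and $\sum (c_n+r_n)/h_n < \infty$ force $r_n$ up and $c_n, h_n$ up even faster. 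The idea is to let $r_n$ increase by one only very rarely: keep $r_n = k$ for a long run of consecutive $n$, then bump to $k+1$, etc.

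First I would set up the recursion. Fix $r_1 = 2$, $c_1 = 1$, and set $c_{n+1} = m_n = h_n + 2c_n + 2r_n - 2$ as in the proof of Theorem~\ref{T:main1}; with this choice $c_{n+1} \asymp h_{n+1} \asymp r_n h_n$, so $h_n$ and $c_n$ grow like $\prod_{j<n}(r_j+1)$ by Proposition~\ref{P:hr}. The freedom is entirely in when to increment $r_n$. I would prescribe: given that $r_n = k$, keep $r_{n+1} = k$ as long as doing so keeps $\sum (c_j + r_j)/h_j$ on track to converge and keeps $r_n^2/h_n$ heading to zero — both of which are automatic once $k$ is fixed and $n$ grows, since $h_n$ grows at least geometrically with ratio $\geq k+1 \geq 3$ while $c_n/h_n$ decays geometrically. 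Then increment to $k+1$ at some index $n_k$ chosen large enough that the tail contributions are controlled (e.g. so that $\sum_{n \geq n_k} (c_n+r_n)/h_n \leq 2^{-k}$, which is possible since that tail $\to 0$ for fixed $k$). This guarantees $\{r_n\}$ is nondecreasing to infinity, $\sum (c_n+r_n)/h_n < \infty$, and $r_n^2/h_n \to 0$, so Theorem~\ref{T:mixing} applies and $T$ is mixing.

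Next I would verify $\sum 1/p(q) = \infty$. On each block $c_n < q \leq c_{n+1}$ there are about $c_{n+1} - c_n \asymp h_n r_n$ values of $q$, and on this block $p(q) \leq q(r_n+1) \leq c_{n+1}(r_n+1) \asymp h_n r_n^2$. Hence
\[
\sum_{q = c_n+1}^{c_{n+1}} \frac{1}{p(q)} \;\geq\; \frac{c_{n+1} - c_n}{c_{n+1}(r_n+1)} \;\gtrsim\; \frac{1}{r_n+1},
\]
so $\sum_q 1/p(q) \gtrsim \sum_n 1/(r_n+1)$. Since $r_n = k$ on a block of consecutive indices (from $n_{k-1}$ to $n_k - 1$) of length $n_k - n_{k-1}$, this is $\gtrsim \sum_k (n_k - n_{k-1})/(k+1)$, and if each run is chosen long enough — say $n_k - n_{k-1} \geq k+1$, which is compatible with the earlier "large enough" requirements since those only demand $n_k$ be \emph{at least} some threshold — the sum diverges. (Being slightly more careful, one should use the better lower bound on $p(q)$ only where needed, but a matching upper bound $p(q) \lesssim h_n r_n^2 \asymp c_{n+1}(r_n+1)$ together with $q$ ranging over an interval of length $\asymp c_{n+1}/r_n \cdot r_n = $ order $c_{n+1}$ near the top suffices; in fact restricting to $q \in (c_{n+1}/2, c_{n+1}]$ gives $\asymp c_{n+1}$ terms each $\gtrsim 1/(c_{n+1} r_n)$, again totalling $\gtrsim 1/r_n$.)

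The main obstacle I anticipate is bookkeeping the two competing demands on the run-lengths $n_k - n_{k-1}$: they must be \emph{short enough} (as a function of how fast $h_n$ grows) that the summability condition $\sum(c_n+r_n)/h_n < \infty$ and $r_n^2/h_n \to 0$ are not violated, yet \emph{long enough} that $\sum (n_k - n_{k-1})/(k+1) = \infty$. The resolution is that there is no real tension: once $r_n$ is held at a fixed value $k$, $h_n$ grows geometrically with ratio $\geq k+1$, so $(c_n+r_n)/h_n$ and $r_n^2/h_n$ both decay geometrically, and their tails can be made smaller than any prescribed $\varepsilon$ by waiting — the run can be arbitrarily long at no cost. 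So one picks $n_k$ to satisfy simultaneously (a) $n_k \geq n_{k-1} + (k+1)$ and (b) $\sum_{n \geq n_{k-1}}^{n_k}$-type tail bounds with geometric budget $2^{-k}$; both are lower bounds on $n_k$, hence jointly satisfiable. I would also double-check the edge conditions $c_{n+1} \geq h_n + 2c_n + 2r_n - 2$ (equality here, fine) and $2r_n \leq h_n$ (holds for $n$ large since $h_n \to \infty$ geometrically while $r_n$ is eventually much smaller), adjusting the first few terms by hand if necessary.
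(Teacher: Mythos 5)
There is a fatal obstruction to your plan: for \emph{every} extremely elevated staircase transformation, $\sum 1/r_n$ necessarily converges, so your goal of arranging $\sum 1/(r_n+1) = \infty$ is unattainable. The definition forces $c_{n+1} \geq m_n \geq h_n$, while $h_{n+1} \leq (r_n+1)h_n + r_nc_n + \tfrac12 r_n(r_n-1) \leq 3(r_n+1)h_n$ for large $n$ (using $c_n = o(h_n)$ and $r_n^2 = o(h_n)$); hence $\frac{c_{n+1}}{h_{n+1}} \geq \frac{1}{3(r_n+1)}$, and the required condition $\sum \frac{c_n+r_n}{h_n} < \infty$ forces $\sum \frac{1}{r_n} < \infty$. (This is exactly the observation that opens the paper's proof of Theorem~\ref{T:noqlogq}.) Your claim that ``once $r_n$ is held at a fixed value $k$, $(c_n+r_n)/h_n$ decays geometrically'' is where the error enters: during such a run $c_{n+1} = m_n \approx h_n$ and $h_{n+1} \approx (k+1)h_n$, so $c_{n+1}/h_{n+1}$ stays pinned near the \emph{constant} $1/(k+1)$ rather than decaying — $c_n$ grows geometrically right along with $h_n$. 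A run of length $L$ at value $k$ therefore contributes about $L/(k+1)$ to $\sum c_n/h_n$, so the two demands you identified (runs long enough that $\sum_k (n_k-n_{k-1})/(k+1) = \infty$, versus $\sum c_n/h_n < \infty$) are in direct contradiction, not merely in need of bookkeeping.

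The way out, and what the paper actually does, is to extract more from the block sum than your estimate $\frac{c_{n+1}-c_n}{c_{n+1}(r_n+1)} \approx \frac{1}{r_n+1}$. Since $p(q) \leq q(r_n+1)$ on the block, one has $\sum_{q=c_n+1}^{c_{n+1}} \frac{1}{p(q)} \geq \frac{1}{r_n+1}\sum_{q=c_n+1}^{c_{n+1}} \frac{1}{q} \geq \frac{1}{r_n+1}\log\bigl(\frac{c_{n+1}+1}{c_n+1}\bigr)$, and because $c_{n+1}/c_n \approx h_n/h_{n-1} \approx r_{n-1}+1 \to \infty$, this is on the order of $\frac{\log(r_{n-1})}{r_n+1}$ — a logarithmic gain over your bound. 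One then only needs $\sum \frac{\log r_n}{r_n} = \infty$ while $\sum \frac{1}{r_n} < \infty$, which is achieved by taking $r_n \approx (n+1)(\log(n+1))^{1+\epsilon}$ with $0 < \epsilon \leq 1$: then $\sum \frac{1}{r_n} < \infty$ by the integral test, but $\frac{\log r_{n-1}}{r_n} \approx \frac{1}{(n+1)(\log(n+1))^{\epsilon}}$, whose sum diverges.
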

\begin{proof}
Fix $0 < \epsilon \leq 1$ and set $r_{n} = \lceil (n+1)(\log(n+1))^{1+\epsilon} \rceil - 1$ and $c_{1} = 1$ and $c_{n+1} = m_{n}$.  As $h_{n} \geq \prod_{j=1}^{n-1} r_{j} \geq \prod_{j=1}^{n-1} (j+1) = n!$ we have $\frac{r_{n}^{2}}{h_{n}} \to 0$.  By the integral comparison test, $\sum \frac{1}{r_{n}} < \infty$.  Then $\sum \frac{c_{n}}{h_{n}} < \infty$ following the same reasoning as in the proof of Theorem \ref{T:main1}.  So, by Theorem \ref{T:mixing}, the extremely elevated staircase transformation $T$ with cut sequence $\{ r_{n} \}$ and elevating sequence $\{ c_{n} \}$ is mixing on a finite measure space.

Then $c_{n} + r_{n} \leq h_{n}$ for large $n$ so $c_{n} = h_{n-1} + 2c_{n-1} + 2r_{n-1} - 2 \leq 3h_{n-1}$.  Since $1/x$ is a decreasing positive function for $x > 0$, a Riemann sum approximation gives $\sum_{q=a+1}^{b} \frac{1}{q} \geq \int_{a+1}^{b+1} \frac{1}{x}~dx = \log(b+1) - \log(a+1)$.  
Employing Proposition \ref{C:all},
\begin{align*}
\sum_{q=2}^{\infty} \frac{1}{p(q)} &= \sum_{n} \sum_{q=c_{n}+1}^{c_{n+1}} \frac{1}{p(q)}
\geq \sum_{n} \sum_{q=c_{n}+1}^{c_{n+1}} \frac{1}{q(r_{n}+1)}
= \sum_{n} \frac{1}{r_{n}+1} \sum_{q=c_{n}+1}^{c_{n+1}} \frac{1}{q} \\
&\geq \sum_{n} \frac{1}{r_{n}+1} \log\Big{(}\frac{c_{n+1}+1}{c_{n}+1}\Big{)}
\geq \sum_{n} \frac{1}{r_{n}+1} \log\Big{(}\frac{h_{n}}{3h_{n-1}}\Big{)}
\geq \sum_{n} \frac{1}{r_{n}+1} \log\Big{(}\frac{(r_{n-1}+1)h_{n-1}}{3h_{n-1}}\Big{)} \\
&\geq \sum_{n} \frac{1}{(n+1)(\log(n+1))^{1 + \epsilon}} \big{(}\log(n(\log(n))^{1+\epsilon}-1) - \log(3)\big{)} \\
&\geq \sum_{n} \frac{1}{(n+1)(\log(n+1))^{1 + \epsilon}} \big{(}\log(n) - \log(3)\big{)} \\
&= \sum_{n} \frac{1}{(n+1)(\log(n+1))^{\epsilon}}\frac{\log(n)}{\log(n+1)} - (\log(3))\sum_{n} \frac{1}{(n+1)(\log(n+1))^{1 + \epsilon}}
\end{align*}
and the left sum diverges as $\epsilon \leq 1$ while the right sum converges as $\epsilon > 0$.
\end{proof}

\subsection{Even lower complexity along sequences}

We are able to achieve even lower complexity for mixing subshifts along a sequence of lengths:

\begin{theorem}\label{T:main2}
\theoremthreetext
\end{theorem}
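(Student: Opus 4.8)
\textbf{Proof proposal for Theorem \ref{T:main2}.}

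The plan is to imitate the construction in Theorem \ref{T:evenlower}, but to choose the cut sequence $\{r_n\}$ so that it grows much faster than polynomially along a subsequence while still keeping $\sum 1/r_n < \infty$. The key observation is that by Proposition \ref{C:all}, $p(q) \leq q(r_n+1)$ whenever $c_n < q \leq c_{n+1}$; in particular, taking $q = c_n+1$ gives $p(c_n+1) \leq (c_n+1)(r_n+1)$. So if we can arrange that $r_n = o((\log c_n)^{\epsilon'})$ along a subsequence of indices $n$ for a suitable $\epsilon' < \epsilon$, then along that subsequence $\frac{p(c_n+1)}{(c_n+1)(\log(c_n+1))^\epsilon} \leq \frac{r_n+1}{(\log(c_n+1))^{\epsilon}} \to 0$, which is exactly what we want (with $q = c_n+1$ as the sequence of lengths). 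The difficulty is that $c_{n+1} = m_n = h_n + 2c_n + 2r_n - 2$ forces $c_{n+1}$ to be comparable to $h_{n+1} \approx h_n \prod(r_j+1)$, so $\log c_n$ grows like $\sum \log r_j$; to make $r_n$ tiny compared to $(\log c_n)^\epsilon = (\sum_{j<n}\log r_j)^\epsilon$ we need the \emph{earlier} cuts $r_j$ to be large enough that their logarithms sum to something much bigger than $r_n^{1/\epsilon}$.

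Concretely, I would build $\{r_n\}$ in blocks. Fix a rapidly increasing sequence of indices $n_1 < n_2 < \cdots$. On most indices, take $r_n$ to be a slowly growing sequence like $\lceil n (\log n)^{1+\epsilon}\rceil - 1$ (as in Theorem \ref{T:evenlower}, guaranteeing $\sum 1/r_n < \infty$, $r_n^2/h_n \to 0$, and $\sum c_n/h_n < \infty$ by the same arguments). But at the special indices $n_k$, or rather in a long block of indices just \emph{before} $n_k$, inflate the cuts: make $r_n$ large there so that $\log h_{n_k}$ becomes enormous, and then at index $n_k$ itself reset $r_{n_k}$ to a small value (say $r_{n_k} = 2$, or the minimum consistent with monotonicity — here one must be slightly careful since $\{r_n\}$ is required to be nondecreasing, so in fact one cannot decrease $r_n$; instead one keeps $r_{n_k}$ merely \emph{small relative to} $(\log h_{n_k})^\epsilon$, which is automatic once the preceding block has pushed $\log h_{n_k}$ up far enough). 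The point is that $(\log c_{n_k})^\epsilon \geq c' (\sum_{j < n_k}\log(r_j+1))^\epsilon$ can be made to dominate $r_{n_k}$ by choosing the preceding inflation block long enough, while the total contribution of the inflated cuts to $\sum 1/r_n$ is tiny (we are making those $r_n$ large, so $1/r_n$ small) and the contribution of the inflated blocks to $\sum c_n/h_n$ and the condition $r_n^2/h_n \to 0$ remains controllable because $h_n$ grows at least as fast as $\prod(r_j+1)$.

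The steps, in order: (1) choose the block structure and define $\{r_n\}$ (nondecreasing to infinity) so that between successive special indices the cuts grow moderately, with periodic long stretches where they grow fast enough that $\log h_{n_k} \to \infty$ much faster than any fixed power needed; (2) verify $\sum 1/r_n < \infty$ — here the moderate-growth stretches contribute a convergent tail by the integral test and the fast-growth stretches contribute even less; (3) verify $r_n^2/h_n \to 0$ using $h_n \geq \prod_{j<n}(r_j+1)$ together with the fact that $r_n$ is dominated by the product of earlier $(r_j+1)$'s; (4) verify $\sum (c_n+r_n)/h_n < \infty$, repeating the computation $\sum c_{n+1}/h_{n+1} \leq 2\sum 1/r_n$ from the proof of Theorem \ref{T:main1}, which only used $h_{n+1}\geq r_n(h_n+c_n)$ and $2r_n \leq h_n$; (5) invoke Theorem \ref{T:mixing} to get that $T$ is mixing on a finite measure space; (6) finally, along $q = c_{n_k}+1$, apply Proposition \ref{C:all} to bound $p(q)/(q(\log q)^\epsilon) \leq (r_{n_k}+1)/(\log(c_{n_k}+1))^\epsilon$ and conclude this tends to $0$ by the block choice in step (1). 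I expect the main obstacle to be step (1) together with the bookkeeping in steps (3)–(4): one must simultaneously make $\log h_{n_k}$ huge (needs big $r_j$ in the block before $n_k$) while keeping $r_n^2/h_n \to 0$ and $\sum (c_n+r_n)/h_n < \infty$ uniformly, and while respecting the monotonicity of $\{r_n\}$ — which means the "small" value $r_{n_k}$ is not literally small but only small relative to the doubly-large $\log h_{n_k}$, so the estimates have to be arranged as ratios rather than absolute bounds.
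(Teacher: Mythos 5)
There is a genuine gap, and it is fatal for small $\epsilon$ (which is the only case that matters, since the statement for $\epsilon$ follows from the statement for any smaller $\epsilon' < \epsilon$). Your plan keeps $c_{n+1} = m_n$ as in Theorem \ref{T:evenlower} and tries to arrange $r_{n} = o\big((\log c_{n})^{\epsilon}\big)$ along a subsequence. This is impossible. Every extremely elevated staircase satisfies $\sum 1/r_n < \infty$ (this is forced by $\sum c_n/h_n < \infty$ together with $c_{n+1} \geq m_n \geq h_n$ and $h_{n+1} \leq 3(r_n+1)h_n$; see the first line of the proof of Theorem \ref{T:noqlogq}), and since $\{r_n\}$ is required to be nondecreasing, the convergent tail $\sum_{j=\lceil n/2\rceil}^{n} 1/r_j \geq (n/2)/r_n$ forces $n = o(r_n)$. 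With $c_{n+1} = m_n \leq 3h_n$ and $h_n \leq K\prod_{j<n}(r_j+1)$ (Proposition \ref{P:hr}), monotonicity gives $\log c_{n} \leq \log(3K) + n\log(r_{n}+1) = o(r_n \log r_n)$, hence $(\log c_n)^\epsilon = o(r_n^{2\epsilon}) = o(r_n)$ whenever $\epsilon \leq 1/2$. So your target quantity $(r_n+1)/(\log(c_n+1))^\epsilon$ tends to infinity, not zero, for \emph{any} admissible choice of blocks; the claim in your step (1) that keeping $r_{n_k}$ ``small relative to $(\log h_{n_k})^{\epsilon}$'' is ``automatic'' is exactly where monotonicity defeats you, since inflating earlier cuts inflates $r_{n_k}$ at least as much as it inflates $\log h_{n_k}/n_k$. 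Nor is this merely a weakness of the upper bound in Proposition \ref{C:all}: by Proposition \ref{P:lowerbound}, $p(c_n+1) \geq p(m_{n-1}) \geq h_n \geq \tfrac14 (r_{n-1}+1)\, c_n$ for large $n$, so along your sequence $q = c_n+1$ the ratio $p(q)/(q(\log q)^\epsilon)$ genuinely tends to $+\infty$ when $\epsilon$ is small.

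The missing idea is to decouple $c_{n+1}$ from $m_n$: the definition only requires $c_{n+1} \geq m_n$, and the paper takes $c_{n+1}$ \emph{enormously} larger, namely $r_n = 2^{n^\alpha}-1$ with $\alpha = \lceil (1+\epsilon)/\epsilon\rceil$ and $c_{n+1} = \lceil h_{n+1}/(n+1)^{1+\epsilon}\rceil \gg m_n$. On the long stretch $m_n \leq \ell < c_{n+1}$ the only right-special word is $1^\ell$, so by Lemma \ref{cf4} the complexity increases by exactly $1$ per length there; combined with Proposition \ref{C:all} this yields $p(c_{n+1})/h_{n+1} \to 1$, i.e.\ $p(c_{n+1}) \approx h_{n+1} \approx c_{n+1}\, n^{1+\epsilon}$, while $\log c_{n+1} \approx n^{1+\alpha}$ so that $(\log c_{n+1})^{\epsilon} \gtrsim n^{(1+\alpha)\epsilon} \geq n^{1+2\epsilon} \gg n^{1+\epsilon}$. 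The mechanism for depressing $p(q)/q$ along the subsequence is thus the flat region between $m_n$ and $c_{n+1}$, not a clever choice of cut sequence; no choice of $\{r_n\}$ alone can do it.
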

\begin{proof}
	Set $\alpha = \lceil (1 + \epsilon)/\epsilon \rceil$.  Since $\alpha > 1$, the function $x^{\alpha}$ is increasing so a Riemann sum approximation gives $\sum_{j=1}^{n-1} j^{\alpha} \geq \int_{0}^{n-1} x^{\alpha}~dx = (n-1)^{1+\alpha}/(1 + \alpha)$.  An easy induction argument shows $\sum_{j=1}^{n-1} j^{\alpha} \leq n^{1 + \alpha}$.  So writing $d = 1/(1 + \alpha)$, we have $d(n-1)^{1+\alpha} \leq \sum_{j=1}^{n-1} j^{\alpha} \leq n^{1+\alpha}$.
	
	Construct  $T$ inductively by setting $r_1=1$ and $c_1=1$ and, for $n > 1$,
	\[
	r_{n} = 2^{n^{\alpha}} - 1 \text{ and } c_{n} = \Big{\lceil} \frac{h_{n}}{n^{1+\epsilon}} \Big{\rceil}.
	\]
	Then $\sum \frac{c_{n}}{h_{n}} \leq \sum \frac{1}{n^{1+\epsilon}}+\frac{1}{h_{n}} < \infty$.
	Since
	\[
	\prod_{j=1}^{n-1} (r_{j} + 1) = \prod_{j=1}^{n-1} 2^{j^{\alpha}} = 2^{\sum_{j=1}^{n-1} j^{\alpha}}
	\text{ we have }
	2^{d(n-1)^{1+\alpha}} \leq \prod_{j=1}^{n-1} (r_{j}+1) \leq 2^{n^{1+\alpha}}.
	\]
	By Proposition \ref{P:hr}, we then have that for some constant $K$,
	$ 2^{d(n-1)^{1+\alpha}} \leq h_{n} \leq K \cdot 2^{n^{1+\alpha}}$.
	Then
	\[
	\frac{r_{n}^{3}}{h_{n}} \leq \frac{2^{3n^{\alpha}}}{2^{d(n-1)^{1+\alpha}}} \to 0 \quad\quad\text{since}\quad\quad 
	\frac{d(n-1)^{1+\alpha}-3n^{\alpha}}{n^{\alpha}} = d\Big{(}1-\frac{1}{n}\Big{)}^{\alpha}(n-1) - 3 \to \infty.
	\]
	To see that $T$ is an extremely elevated staircase transformation (hence is mixing on a finite measure space by Theorem \ref{T:mixing}),
	\[
	\frac{m_{n}}{c_{n+1}} \leq \frac{3h_{n}}{h_{n+1}/(n+1)^{1 + \epsilon}} \leq \frac{3h_{n}(n+1)^{1 + \epsilon}}{r_{n}h_{n}} = \frac{3(n+1)^{1+\epsilon}}{r_{n}} \to 0,
	\]
	We may apply Lemma \ref{cf4} to get $p(c_{n+1}) = p(m_{n}) + (c_{n+1} - m_{n})$.  Then Proposition \ref{C:all} gives
	\begin{align*}
	\frac{p(c_{n+1})}{h_{n+1}} \leq \frac{c_{n+1}}{h_{n+1}} + \frac{(h_{n}+2c_{n}+2r_{n}-2)(r_{n}+1)}{(r_{n}+1)h_{n}}
	\leq \frac{c_{n+1}}{h_{n+1}} + 1 + \frac{2c_{n} + 2r_{n}}{h_{n}} \to 1.
	\end{align*}
	Since $\log(c_{n}) \geq \log(h_{n}) - (1+\epsilon)\log(n) \geq \log(2^{d(n-1)^{1+\alpha}}) - 2\log(n)$,
	using that $\alpha\epsilon \geq ((1+\epsilon)/\epsilon)\epsilon = \epsilon + 1$,
	\begin{align*}
	\liminf \frac{c_{n}(\log(c_{n}))^{\epsilon}}{h_{n}} &\geq
	\liminf \frac{(d(n-1)^{1+\alpha})^{\epsilon}}{n^{1+\epsilon}}
	\geq \liminf \frac{d^{\epsilon} (n-1)^{\epsilon + \alpha\epsilon}}{n^{1+\epsilon}} \\
	& \geq \liminf \frac{d^{\epsilon}(n-1)^{1+2\epsilon}}{n^{1+\epsilon}} = \liminf d^{\epsilon}\Big{(}1- \frac{1}{n}\Big{)}^{1+\epsilon}(n-1)^{\epsilon} = \infty.
	\end{align*}
	Therefore
	\[
	\limsup \frac{p(c_{n})}{c_{n}(\log(c_{n}))^{\epsilon}} \leq \limsup \frac{p(c_{n})}{h_{n}} \limsup \frac{h_{n}}{c_{n}(\log(c_{n}))^{\epsilon}} \leq 1\cdot 0 = 0. \qedhere
	\]
\end{proof}

\subsection{A lower bound on the complexity}

Our constructions, however, do not attain complexity as low as $q \log(q)$:
\begin{theorem}\label{T:noqlogq}
\theoremfourtext
\end{theorem}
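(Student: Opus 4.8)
The plan is to exhibit lengths $q_n \to \infty$ with $\dfrac{p(q_n)}{q_n \log q_n} \to \infty$, taking $q_n = m_n$. By Proposition~\ref{P:lowerbound}, $p(m_n) \geq h_{n+1} \geq (r_n+1)h_n$, while $m_n = h_n + 2c_n + 2r_n - 2 = h_n(1+o(1))$, since $\sum \frac{c_n+r_n}{h_n} < \infty$ forces $c_n, r_n = o(h_n)$. Hence
\[
\frac{p(m_n)}{m_n \log m_n} \;\geq\; \frac{(r_n+1)h_n}{m_n \log m_n} \;\sim\; \frac{r_n}{\log h_n},
\]
so the theorem reduces to showing $\limsup \frac{r_n}{\log h_n} = \infty$.

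The first real step is to notice that every extremely elevated staircase automatically satisfies $\sum \frac{1}{r_n} < \infty$. Indeed, $c_{n+1} \geq m_n \geq h_n + 2c_n$, while $h_{n+1} = (r_n+1)h_n + r_n c_n + \tfrac12 r_n(r_n-1) \leq (r_n+2)(h_n + c_n)$ once $\tfrac12 r_n^2 \leq h_n$ (which holds eventually because $\frac{r_n^2}{h_n}\to 0$); therefore $\frac{c_{n+1}}{h_{n+1}} \geq \frac{h_n+c_n}{(r_n+2)(h_n+c_n)} = \frac{1}{r_n+2} \geq \frac{1}{3r_n}$ for large $n$. Since $\sum \frac{c_n}{h_n} < \infty$ is part of the definition, $\sum \frac{1}{r_n} < \infty$ follows.

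Next I would prove $\limsup \frac{r_n}{\log h_n} = \infty$ by contradiction. By Proposition~\ref{P:hr}, $\log h_n = L_n + O(1)$ where $L_n = \sum_{j=1}^{n-1}\log(r_j+1)$, so $L_{n+1} - L_n = \log(r_n+1)$. If $r_n \leq M\log h_n$ for all large $n$, then $\log(r_n+1) \leq \log(3M L_n)$ eventually, so $L_{n+1} \leq L_n + \log L_n + \log(3M)$; a routine induction shows this recursion forces $L_n = O(n\log n)$, hence $r_n \leq M\log h_n = O(n\log n)$, hence $\sum \frac{1}{r_n} \geq c\sum \frac{1}{n\log n} = \infty$, contradicting the previous paragraph. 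Combining, $\limsup_q \frac{p(q)}{q\log q} \geq \limsup_n \frac{p(m_n)}{m_n\log m_n} \geq \limsup_n \frac{(r_n+1)h_n}{m_n\log m_n} = \infty$.

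The main obstacle is the middle of the argument: recognizing that the harmless-looking summability condition $\sum \frac{c_n+r_n}{h_n} < \infty$ in the definition secretly forces $\sum \frac{1}{r_n} < \infty$, and then that this summability in turn is incompatible with $r_n$ being only $O(\log h_n)$. Once $\limsup \frac{r_n}{\log h_n} = \infty$ is in hand, everything else is bookkeeping with Propositions~\ref{P:hr} and \ref{P:lowerbound} already established above.
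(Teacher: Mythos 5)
Your proof is correct, and it follows the paper's skeleton up to the last step: both arguments take $q_n = m_n$, use Proposition~\ref{P:lowerbound} together with $m_n \leq 3h_n$ to reduce the theorem to $\limsup \frac{r_n}{\log h_n} = \infty$, and both first extract $\sum \frac{1}{r_n} < \infty$ from $c_{n+1} \geq m_n$ and $\sum \frac{c_n}{h_n} < \infty$. Where you diverge is in deducing unboundedness of $\frac{r_n}{\log h_n}$ from these two facts. The paper splits into cases according to whether $r_n \leq n^2$ infinitely often, writing $r_n + 1 = (n+1)\log(n+1)z_n$ in the first case and using $\log(h_n) \leq n\log(r_n+1) + O(1)$ in the second, and in fact establishes the stronger conclusion $\liminf \frac{r_n+1}{\log h_n} = \infty$. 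You instead argue by contradiction: if $r_n \leq M \log h_n$ eventually, then with $L_n = \sum_{j<n}\log(r_j+1)$ (which equals $\log h_n + O(1)$ by Proposition~\ref{P:hr}) the recursion $L_{n+1} \leq L_n + \log L_n + O(1)$ forces $L_n = O(n\log n)$, hence $r_n = O(n\log n)$ and $\sum \frac{1}{r_n} = \infty$, a contradiction. Your version is cleaner in that it avoids the case split (and the somewhat delicate claim in the paper's first case that $z_n \to \infty$), at the cost of only giving the $\limsup$ statement rather than the $\liminf$ one; both are sufficient for the theorem as stated. The one step you leave as ``routine induction'' --- that $L_{n+1} \leq L_n + \log L_n + D$ implies $L_n = O(n\log n)$ --- does check out (choose $A \geq 3$ large enough for the base case; the inductive step needs only $\log A + \log\log n + D \leq (A-1)\log n$ for large $n$), but it should be written out in a final version.
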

\begin{proof}
Since $T$ is extremely elevated,
$\infty > \sum_{n} \frac{c_{n+1}}{h_{n+1}} \geq \sum_{n} \frac{h_{n}}{3(r_{n}+1)h_{n}} = \frac{1}{3} \sum_{n} \frac{1}{r_{n}}
$.
By Proposition \ref{P:lowerbound},
\begin{align*}
\frac{p(m_{n})}{m_{n}\log(m_{n})} 
&\geq \frac{h_{n+1}}{3h_{n}\log(3h_{n})}
\geq \frac{r_{n}+1}{3\log(3h_{n})}. \tag{$\star$}
\end{align*}
By Proposition \ref{P:hr} there exists a constant $K$ such that $h_{n} \leq K\prod_{j=1}^{n-1}r_{j}$ so
$\log(h_{n}/K) \leq \sum_{j=1}^{n-1} \log(r_{j})$.

Consider first when $r_{n} \leq n^{2}$ for infinitely many $n$.  
Write $r_{n} + 1 = (n+1) \log(n+1) z_{n}$.  Then $z_{n} \to \infty$ since $\sum \frac{1}{r_{n}} < \infty$ and $z_{n} \leq n+1$ as we have assumed $r_{n} \leq n^{2}$,
\begin{align*}
\sum_{j=1}^{n-1} \log(r_{j})
&= \sum_{j=1}^{n-1} (\log(j+1) + \log(\log(j+1)) + \log(z_{j})) 
\leq \sum_{j=1}^{n-1} 3\log(j+1)
\leq 3n\log(n).
\end{align*}
So, as $z_{n} \to \infty$,
\[
\liminf \frac{r_{n}+1}{\log(h_{n})} \geq \liminf \frac{(n+1)\log(n+1)z_{n}}{9n\log(n)} = \liminf \frac{z_{n}}{9}
= \infty.
\]
Now consider when $r_{n} > n^{2}$ for all sufficiently large $n$.  Then as $\log(x) \leq x^{1/3}$ for large $x$ and $\log(h_{n}) \leq n\log(n+1) + \log(K)$, as $r_{n}$ is increasing,
\[
\liminf \frac{r_{n}+1}{\log(h_{n})}
\geq \liminf \frac{r_{n}+1}{n\log(r_{n}+1)} \geq \liminf \frac{r_{n}}{nr_{n}^{1/3}} = \liminf \frac{r_{n}^{2/3}}{n} \geq \liminf \frac{n^{4/3}}{n} = \infty.
\]
In both cases, we have $\liminf \frac{r_{n}+1}{\log(h_{n})} \to \infty$.  By equation $(\star)$, this completes the proof.
\end{proof}

\subsection{Linear complexity is unattainable even along a sequence}

Though the complexity along a sequence can be lower than $q \log(q)$, it cannot be linear:

\begin{theorem}\label{T:superlinear}
\theoremfivetext
\end{theorem}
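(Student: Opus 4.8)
The plan is to show that $p(q)/q \to \infty$ by exhibiting, for each $n$, a range of lengths $\ell$ on which the complexity difference $p(\ell+1)-p(\ell)$ is large, and then to argue this forces $p(q)/q$ to be large for all $q$ past a certain point. Concretely, from the counting lemmas (Lemmas~\ref{cf1}--\ref{cf4}) we know that on the range $c_n + r_n \leq \ell \leq h_n + 2c_n + 1$ we have $p(\ell+1) - p(\ell) = r_n + 1$, and since $r_n \to \infty$, the slope of $p$ is eventually as large as we like on long intervals. The subtlety is that $p(q)/q$ is an \emph{average} slope from $1$ to $q$, so a large slope on a far-away interval does not immediately help; I need to handle small $q$ (just past $c_n$, before the slope ramps up) and make sure the contribution of the early part of the language is negligible.

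The key steps, in order: First, fix a target $M$; choose $N$ so that $r_N \geq M$ (possible since $r_n \nearrow \infty$). Second, for any $q$ with $c_N < q \leq c_{N+1}$, split $p(q) = p(c_N) + \sum_{\ell = c_N}^{q-1}(p(\ell+1)-p(\ell))$. For $q$ in the ``bulk'' range, i.e. $q \geq c_N + r_N$ (say up to $h_N + 2c_N$), the sum over $\ell$ from $c_N + r_N$ to $q-1$ contributes at least $(q - c_N - r_N)(r_N+1)$, which is $\geq (q/2)(r_N + 1) \geq (q/2)M$ once $q \geq 2(c_N + r_N)$; using $c_N + r_N = o(h_N)$ and $c_{N+1} = m_N \approx h_N$, this covers essentially all $q \in (2(c_N+r_N), c_{N+1}]$. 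For $q$ in the small range $c_N < q \leq 2(c_N + r_N)$, I instead use the lower bound coming from Lemma~\ref{cf1} together with the words accumulated at the post-productive stage $n = N-1$: by Proposition~\ref{P:lowerbound}, $p(m_{N-1}) \geq h_N$, and since $m_{N-1} = c_N$ while $2(c_N + r_N) = o(h_N)$, we get $p(q) \geq p(c_N) \geq h_N \gg q$ for all such $q$. Third, assemble: for all $q > c_N$, $p(q)/q \geq \min(M/2, h_N/(2(c_N+r_N))) \to \infty$ as $M \to \infty$ (equivalently $N \to \infty$), which is exactly $\lim p(q)/q = \infty$.

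The main obstacle I anticipate is the bookkeeping at the transition points: making the ``small $q$'' and ``bulk $q$'' bounds overlap cleanly so that \emph{every} $q > c_N$ is covered, and confirming that the hypotheses $r_n^2/h_n \to 0$, $c_n + r_n = o(h_n)$ (which follows from $\sum (c_n + r_n)/h_n < \infty$), and $c_{n+1} = m_n$ together guarantee that $c_N + r_N$ is indeed $o(h_N)$ and that $c_{N+1} \asymp h_N$, so the ratio $h_N/(c_N + r_N) \to \infty$ at a rate comparable to $r_N$. I would also double-check the edge case where $q$ lands in the short ``ramp-up'' windows covered by Lemmas~\ref{cf1}, \ref{cf3.1}, \ref{cf3.2}, \ref{cf4} rather than Lemma~\ref{cf2}; these windows have length $O(r_n)$, hence are absorbed into the ``small $q$'' regime or are negligible perturbations to the bulk estimate, but this should be stated carefully rather than waved away. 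Once those inequalities are pinned down, the conclusion $\lim p(q)/q = \infty$ is immediate.
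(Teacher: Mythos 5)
Your overall strategy is the same as the paper's: partition the lengths according to the structural breakpoints of the construction, use Lemma \ref{cf2} to get slope $r_n+1$ on the long middle range, and use Proposition \ref{P:lowerbound} (i.e.\ $p(m_{n-1})\geq h_n$) for the lengths just past the previous productive stage, where $q$ is still $o(h_n)$. Your two main cases --- $c_N<q\leq 2(c_N+r_N)$ via $p(q)\geq p(m_{N-1})\geq h_N\gg q$, and $2(c_N+r_N)\leq q\leq h_N+2c_N+1$ via $p(q)\geq (q-c_N-r_N)(r_N+1)\geq \frac{q}{2}(r_N+1)$ --- are exactly the paper's first two cases, and your remark that the Lemma \ref{cf3.1}/\ref{cf3.2} windows have length $O(r_n)$ and are negligible is essentially the paper's third case.

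The one genuine gap is your treatment of the top of each interval. You assume $c_{n+1}=m_n$ (and hence $c_{n+1}\asymp h_n$), but the definition of an extremely elevated staircase only requires $c_{n+1}\geq m_n$; the equality holds in the constructions of Theorems \ref{T:main1} and \ref{T:evenlower} but not in general --- indeed the construction in Theorem \ref{T:main2} has $m_n/c_{n+1}\to 0$ --- and Theorem \ref{T:superlinear} is a statement about \emph{every} extremely elevated staircase. Consequently the Lemma \ref{cf4} window $m_n\leq q<c_{n+1}$, where the increments drop to $1$, need not have length $O(r_n)$ and can be arbitrarily long, and for $q$ there your bulk bound $p(q)\geq(q-c_N-r_N)(r_N+1)$ is simply false, since the increments past $h_n+2c_n+1$ are strictly smaller than $r_n+1$. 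The fix is the tool you already used for small $q$: $p(q)\geq p(m_n)\geq h_{n+1}$ while $q<c_{n+1}=o(h_{n+1})$, so $p(q)/q\to\infty$ on this window as well. The paper sidesteps the issue by partitioning lengths into blocks $[m_{n-1},m_n)$ rather than $(c_n,c_{n+1}]$, so that this slow window is automatically absorbed into the ``short lengths'' case at the next index. With that one case added (or with the paper's choice of partition), your argument is complete.
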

\begin{proof}
Let $\epsilon > 0$.  Then there exists $N$ such that for $n \geq N$, we have $\frac{c_{n}+r_{n}}{h_{n}} < \epsilon$ (since $T$ is on a finite measure space) and $r_{n} \geq 1/\epsilon$ (since $r_{n} \to \infty$ is necessary for $T$ to be mixing).

For $q \geq m_{N-1}$, choose $n \geq N$ such that $m_{n-1} \leq q < m_{n}$.

If $m_{n-1} \leq q < 2(c_{n} + r_{n})$ then, using Proposition \ref{P:lowerbound},
\[
\frac{p(q)}{q} \geq \frac{p(m_{n-1})}{2(c_{n}+r_{n})} \geq \frac{h_{n}}{2(c_{n}+r_{n})} > \frac{1}{2\epsilon}.
\]

For $c_{n} + r_{n} \leq q < h_{n} + 2c_{n}$, by Lemma \ref{cf2}, $p(q) - p(c_{n}+r_{n}) \geq (q - c_{n} - r_{n})r_{n}$.  Then for $2(c_{n} + r_{n}) \leq q < h_{n} + 2c_{n} + 1$,
\[
\frac{p(q)}{q} \geq \frac{(q - c_{n} - r_{n})r_{n}}{q}
\geq \Big{(}1 - \frac{c_{n}+r_{n}}{q}\Big{)}r_{n}
\geq \frac{1}{2}r_{n} 
> \frac{1}{2\epsilon}.
\]

For $h_{n} + 2c_{n} + 1 \leq q < m_{n}$, we have $p(q) \geq p(h_{n} + 2c_{n}) \geq (h_{n} + c_{n} - r_{n})r_{n}$.  Provided $\epsilon < 1/4$, we have $(1 - \epsilon)/(1+2\epsilon) \geq 1/2$ so for $h_{n} + 2c_{n} \leq q < m_{n}$,
\[
\frac{p(q)}{q} \geq \frac{(h_{n}+c_{n}-r_{n})r_{n}}{m_{n}}
= \frac{1 + \frac{c_{n}-r_{n}}{h_{n}}}{1 + 2\frac{c_{n}+r_{n}-1}{h_{n}}} \cdot r_{n}
> \frac{1-\epsilon}{1 + 2\epsilon} \cdot \frac{1}{\epsilon}
\geq \frac{1}{2\epsilon}.
\]

Taking $\epsilon \to 0$ then gives $\frac{p(q)}{q} \to \infty$ as for all sufficiently large $q$ we have $\frac{p(q)}{q} > \frac{1}{2\epsilon}$.
\end{proof}

\appendix

\section{Mixing for extremely elevated staircase transformations}

For our proof of mixing, we do not need the full strength of extremely elevated staircase transformations and so will define a  more general class:

\begin{definition}
A rank-one transformation is an \textbf{elevated staircase transformation} when it has nondecreasing cut sequence $\{ r_{n} \}$ tending to infinity with $\frac{r_{n}^{2}}{h_{n}} \to 0$, and spacer sequence given by $s_{n,i} = c_{n} + i$ for $0 \leq i < r_{n}$ and $s_{n,r_n} = 0$ for some sequence $\{ c_{n} \}$ such that $c_{n+1} \geq c_{n} + r_{n}$ and $\sum \frac{c_{n} + r_{n}}{h_{n}} < \infty$.
\end{definition}

This is the same class as the more natural $s_{n,i} = e_{n} + i$ for a sequence $\{ e_{n} \}$ required to satisfy no condition beyond $e_{n} \geq 0$ (and $\sum \frac{1}{h_{n}}\sum_{j \leq n} e_{j} < \infty$ to ensure finite measure).  In particular, traditional staircases, corresponding to $e_{n} = 0$, are in the class of elevated staircase transformations.

\begin{proposition}\label{P:isomo}
Let $\{ e_{n} \}$ be a sequence of nonnegative integers.  Let $\tilde{T}$ be the rank-one transformation with cut sequence $\{ r_{n} \}$ and spacer sequence $\{ \tilde{s}_{n,i} \}$ given by $\tilde{s}_{n,i} = e_{n} + i$ for $0 \leq i \leq r_{n}$.  Let $T$ be the rank-one transformation with cut sequence $\{ r_{n} \}$ and elevating sequence $\{ c_{n} \}$ given by $c_{1} = e_{1}$ and $c_{n+1} = e_{n+1} + \sum_{j=1}^{n} (e_{j} + r_{j}) = e_{n+1} + c_{n} + r_{n}$ and spacer sequence given by $s_{n,i} = c_{n}+i$ for $0 \leq i < r_{n}$ and $s_{n,r_{n}} = 0$.  Then $T$ and $\tilde{T}$ generate the same subshift (and are measure-theoretically isomorphic).
\end{proposition}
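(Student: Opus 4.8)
The plan is to compare the canonical generating words of the two symbolic models and show they differ only by a controlled trailing block of $1$'s. Write $B_n$ and $\tilde B_n$ for the defining words of $X(T)$ and $X(\tilde T)$, so $B_1 = \tilde B_1 = 0$ and, for $n \ge 1$,
\[
B_{n+1} = \Bigl(\prod_{i=0}^{r_n-1} B_n 1^{c_n+i}\Bigr) B_n \qquad\text{and}\qquad \tilde B_{n+1} = \prod_{i=0}^{r_n} \tilde B_n 1^{e_n+i}.
\]
The crux is the identity $\tilde B_n = B_n 1^{c_n - e_n}$, valid for every $n$ (with $c_1 - e_1 = 0$), which I would prove by induction. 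The base case is immediate; for the step, substituting $\tilde B_n = B_n 1^{c_n - e_n}$ into the recursion for $\tilde B_{n+1}$ gives
\[
\tilde B_{n+1} = \prod_{i=0}^{r_n} B_n 1^{c_n+i} = \Bigl(\prod_{i=0}^{r_n-1} B_n 1^{c_n+i}\Bigr) B_n 1^{c_n+r_n} = B_{n+1} 1^{c_n+r_n},
\]
while the defining recursion $c_{n+1} = e_{n+1} + c_n + r_n$ gives $c_{n+1} - e_{n+1} = c_n + r_n$, closing the induction.

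From this I would deduce $X(T) = X(\tilde T)$. A subshift is determined by its language, and $\mathcal{L}(T) = \bigcup_n \{\text{subwords of } B_n\}$ while $\mathcal{L}(\tilde T) = \bigcup_n \{\text{subwords of } \tilde B_n\}$. Since $B_n$ is a prefix of $\tilde B_n$, every subword of $B_n$ is a subword of $\tilde B_n$; and since $0 \le c_n - e_n \le c_n$, the word $\tilde B_n = B_n 1^{c_n - e_n}$ is a prefix of $B_n 1^{c_n}$, hence of $B_{n+1}$, so every subword of $\tilde B_n$ is a subword of $B_{n+1}$. Therefore the two unions of subwords coincide, i.e.\ $\mathcal{L}(T) = \mathcal{L}(\tilde T)$, and so $X(T) = X(\tilde T)$.

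For the isomorphism I would check that the two empirical measures coincide. The symbolic heights satisfy $|B_n| = h_n$ and $|\tilde B_n| = h_n + (c_n - e_n)$. Fix $w$ of length $\ell$; any occurrence of $w$ in $\tilde B_n$ beginning at a coordinate at most $h_n - \ell$ is an occurrence in $B_n$ and conversely, so the two occurrence counts differ by at most the number of remaining start positions, which is at most $(c_n - e_n) + \ell \le c_n + \ell$. Dividing by $|\tilde B_n| = h_n + o(h_n)$ and using $c_n/h_n \to 0$ (valid whenever $T$, equivalently $\tilde T$, has finite measure, as in all our applications) shows $\mu([w])$ computed along $\tilde B_n$ equals $\mu([w])$ computed along $B_n$. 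Hence $X(T) = X(\tilde T)$ carries a single empirical measure, and by the isomorphism between a finite-measure rank-one transformation and its symbolic model (\cite{danilenko16}, \cite{adamsferenczipeterson17}) both $T$ and $\tilde T$ are measure-theoretically isomorphic to it, and hence to each other. The argument is essentially bookkeeping; the only points needing care are the base case of the induction (where $e_1 = c_1$ forces the trailing block empty) and confirming that the $O(c_n + \ell)$ discrepancy in occurrence counts is asymptotically negligible.
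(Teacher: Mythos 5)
Your proof is correct and follows essentially the same route as the paper's: the key inductive identity $\tilde B_n = B_n 1^{c_n - e_n}$ is exactly the paper's claim $\tilde B_{n+1} = B_{n+1} 1^{\sum_{j=1}^{n}(e_j + r_j)}$ reindexed, and the prefix argument for language equality matches the paper's conclusion. Your additional verification that the empirical measures agree (via the $O(c_n + \ell)$ bound on the discrepancy in occurrence counts) is a detail the paper leaves implicit, and is a welcome clarification rather than a deviation.
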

\begin{proof}
If $\tilde{B}_{n}$ are the words representing the $\tilde{s}_{n,i}$ construction and $B_{n}$ those of $T$ then $\tilde{B}_{1} = B_{1} = 0$ and $\tilde{B}_{n+1} = \prod_{i=1}^{r_{n}} \tilde{B}_{n}1^{e_{n}+i}$ and $B_{n} = (\prod_{i=0}^{r_{n}-1}B_{n}1^{c_{n}+i})B_{n}$ and we claim that $\tilde{B}_{n+1} = B_{n+1}1^{\sum_{j=1}^{n}(e_{j}+r_{j})}$ for all $n \geq 1$.  The base case is
\[
\tilde{B}_{2} = \prod_{i=0}^{r_{1}}\tilde{B}_{1}1^{e_{1}+i}
= \Big{(}\prod_{i=0}^{r_{1}-1}\tilde{B}_{1}1^{e_{1}+i}\Big{)}\tilde{B}_{1}1^{e_{1}+r_{1}}
= \Big{(}\prod_{i=0}^{r_{1}-1}B_{1}1^{c_{1}+i}\Big{)}B_{1}1^{e_{1}+r_{1}}
\]
as claimed since $c_{1} = e_{1}$.  Assume the claim holds for $n$ and then
\begin{align*}
\tilde{B}_{n+2} &= \prod_{i=0}^{r_{n+1}}\tilde{B}_{n+1}1^{e_{n+1}+i}
= \Big{(}\prod_{i=0}^{r_{n+1}-1}\tilde{B}_{n+1}1^{e_{n+1}+i}\Big{)}\tilde{B}_{n+1}1^{e_{n+1}+r_{n+1}} \\
&= \Big{(}\prod_{i=0}^{r_{n+1}-1}B_{n+1}1^{\sum_{j=1}^{n}(e_{j}+r_{j})+e_{n+1}+i}\Big{)} B_{n+1}1^{\sum_{j=1}^{n}(e_{j}+r_{j})+e_{n+1}+r_{n+1}} \\
&= \Big{(}\prod_{i=0}^{r_{n+1}-1}B_{n+1}1^{c_{n+1}+i}\Big{)} B_{n+1}1^{\sum_{j=1}^{n+1}(e_{j}+r_{j})}
\end{align*}
so the claim holds for all $n$.  As this means every subword of $\tilde{B}_{n}$ is a subword of $B_{n}$ or $B_{n+1}$ and conversely (with $\tilde{B}_{n-1}$ rather than $\tilde{B}_{n+1}$), the languages of the transformations are the same.
\end{proof}

The proof of mixing is very similar to that of \cite{CreutzSilva2004} for traditional staircases; our proof is self-contained.

Theorem \ref{T:mixing} is a special case of:
\begin{theorem}\label{T:realmixing}
Every elevated staircase transformation is mixing (on a finite measure space).
\end{theorem}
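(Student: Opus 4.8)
The plan is to adapt the proof for classical staircases in \cite{CreutzSilva2004}. Begin with the easy reductions. The hypothesis $\sum \frac{c_n+r_n}{h_n}<\infty$ in the definition of an elevated staircase gives $\sum \frac{1}{r_nh_n}\sum_{i=0}^{r_n}s_{n,i}=\sum \frac{1}{r_nh_n}\big(r_nc_n+\tfrac{1}{2}r_n(r_n-1)\big)\le\sum \frac{c_n+r_n}{h_n}<\infty$, so the cutting-and-stacking space has finite measure and we renormalize it to $[0,1)$. Working in this model, the levels of the columns $C_n$ generate the Borel $\sigma$-algebra, so by a standard three-$\epsilon$ approximation it suffices to prove $\mu(I\cap T^{-k}J)\to\mu(I)\mu(J)$ as $k\to\infty$ whenever $I$ and $J$ are levels of one fixed column $C_N$. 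Fix $\epsilon>0$, $N$, and such $I,J$; given a large $k$, pick the unique $n$ with $h_n\le k<h_{n+1}$ (these windows exhaust $k\ge 1$, and $n\to\infty$ as $k\to\infty$ since $h_{n+1}/h_n\to\infty$), refine $J=\bigsqcup_\beta I_{n,\beta}$ into levels of $C_n$, and note $\mu(I\cap T^{-k}J)=\sum_\beta\mu(I\cap T^{-k}I_{n,\beta})$. It is then enough to understand how each $T^{-k}I_{n,\beta}$ distributes among the levels of $C_n$ and to show that, summed over $\beta\in J$, it is equidistributed (up to $o(1)$, uniformly in $k$) with respect to the coarser level partition of $C_N$.

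The structural computation is the heart of the matter. From $B_{n+1}=\big(\prod_{i=0}^{r_n-1}B_n1^{c_n+i}\big)B_n$, inside $C_{n+1}$ the copy of $I_{n,\beta}$ lying in subcolumn $i$ sits at height $i(h_n+c_n)+\tfrac{i(i-1)}{2}+\beta$, so the base heights of consecutive subcolumns differ by $h_n+c_n+i$, which grows linearly in $i$. Applying $T^{-k}$ and tracking the carries between $C_n$, $C_{n+1}$ and (occasionally) $C_{n+2}$, the copy in subcolumn $i$ is sent to a level of $C_n$, and as $i$ varies over the subcolumns these target indices run through an arithmetic progression with common difference $q=q(k)$, the integer part of $k/(h_n+c_n)$ (so $q$ runs from roughly $1$ up to roughly $r_n$ as $k$ ranges over the window) — except that some copies land instead on spacer levels. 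Hence $T^{-k}I_{n,\beta}$ is, up to an exceptional set whose total $\mu$-measure is controlled by $\sum\frac{c_n+r_n}{h_n}$ (this is exactly the mass draining toward the $\mu$-null orbit of $1^\infty$), a union of about $r_n$ thin slivers situated at $C_n$-levels along this progression which, as $i$ runs over the $r_n+1$ subcolumns and the progression wraps around the column, sweeps through the levels of $C_n$. The linear growth of the spacer lengths is precisely what forces this sweep; the constant offset $c_n$ only produces a harmless translation, which is why the analysis mirrors the classical case (where, via Proposition~\ref{P:isomo}, $c_n$ is replaced by a cumulative-sum sequence).

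It then remains to show that this swept family is $\epsilon$-equidistributed with respect to the level partition of $C_N$ — equivalently that the translates of $I$ by the relevant amounts meet $J$ in the proportionally correct measure, up to a small discrepancy. This is where the conditions $\frac{r_n^2}{h_n}\to 0$ and $r_n\to\infty$ are used: a van der Corput / Weyl-type estimate for the exponential sums $\sum_{i\le r_n}e(\theta\,\tfrac{i(i-1)}{2})$ coming from the quadratic term bounds the concentration of the sweep — no window of a given relative length can capture more than its proportional share of the $r_n$ subcolumns, with discrepancy controlled by $\frac{r_n^2}{h_n}$ — while $r_n\to\infty$ guarantees the sweep has enough terms to equidistribute at all. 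Combining the three $o(1)$ errors (escaping mass, equidistribution discrepancy, and the exact $k$-independent refinement of $I,J$ into $C_n$-levels) and letting $k\to\infty$ gives $\mu(I\cap T^{-k}J)\to\mu(I)\mu(J)$, i.e.\ $T$ is mixing.

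I expect the main obstacle to be precisely the bookkeeping compressed into the last two paragraphs: rigorously tracking the carries between $C_n$, $C_{n+1}$ and $C_{n+2}$ and confirming that the measure of copies landing on spacer levels is genuinely negligible \emph{uniformly} in $k$; and making the equidistribution estimate for the sweep hold for \emph{every} $k\in[h_n,h_{n+1})$ — in particular in the delicate small-$q$ regime where the sweep covers only about $r_n$ nearly-consecutive levels — rather than merely on average. This is the same difficulty handled in \cite{CreutzSilva2004}; the one genuinely new point is to verify that the nonzero shifts $c_n$ do not disrupt the mechanism, which they do not, since the per-subcolumn increment of the spacers remains $1$.
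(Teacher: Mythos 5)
Your geometric bookkeeping is essentially the paper's: the finite-measure reduction, the reduction to unions of levels, the identity placing the copy of $I_{n,\beta}$ in subcolumn $i$ at height $i(h_n+c_n)+\tfrac{1}{2}i(i-1)+\beta$, and the observation that $T^{-k}$ sends the subcolumn copies along an arithmetic progression of levels with common difference $q\approx k/(h_n+c_n)$ while a negligible amount of mass (controlled by $\sum\frac{c_n+r_n}{h_n}$ and by $\frac{r_n^2}{h_n}\to 0$) lands on spacers or wraps around. But the analytic engine you propose for the equidistribution step is both misidentified and insufficient, and this is where the actual content of the proof lives. First, as your own computation shows, the quadratic term $\tfrac{1}{2}i(i-1)$ cancels to a term linear in $i$ once you compare consecutive subcolumns, so there is no quadratic exponential sum $\sum_i e(\theta\,\tfrac{1}{2}i(i-1))$ to estimate; and in an abstract measure-preserving system there is no character to sum against in the first place. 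Second, and more seriously, spatial equidistribution of the sweep among the levels of $C_n$ is simply false in the fixed-$q$ regime: the progression visits only about $r_n\ll h_n$ levels of $C_n$, so no discrepancy estimate of the kind you describe can hold. What makes the argument work is not equidistribution within $C_n$ at all, but the mean ergodic theorem for the power $T^{q}$ applied to the fixed set $B$ (a union of levels of the coarse column $C_N$): the relevant quantity is $\int\bigl|\frac{1}{r_n+1}\sum_{i}\chi_B\circ T^{-qi}-\mu(B)\bigr|\,d\mu\to 0$, which requires knowing that $T^{q}$ is ergodic for every $q$ (Corollary \ref{C:Tkerg}, itself obtained by bootstrapping from the $q=1$ case to weak mixing). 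You flag the small-$q$ regime as the main obstacle but offer no mechanism for it; this is precisely the missing idea.

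The second missing piece is the regime $q=q_n\to\infty$, where the ergodic theorem for a single fixed power no longer applies. The paper handles this with a separate ``power ergodicity'' statement (Proposition \ref{pe}): one first proves that $\{\ell(h_n+c_n)\}$ is rank-one uniform mixing for each fixed $\ell$ (Theorem \ref{kheightUniMixElevated}), interpolates to get mixing of $\{\ell t_n\}$ for each fixed $\ell$, and then uses the Block Lemma (Lemma \ref{BlockLemma}) to convert mixing along fixed multiples into smallness of the averages $\int|\frac{1}{n}\sum_{j}\chi\circ T^{-jk_n}|\,d\mu$ for $k_n\to\infty$ with $k_n\le n$. This bootstrap is the heart of the Adams--Creutz--Silva method and is entirely absent from your outline. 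The condition $\frac{r_n^2}{h_n}\to 0$ is used only to absorb the wrap-around error $(r_nk_n+\tfrac{1}{2}k_n(k_n-1))\mu(I_n)$ from levels near the bottom of the column, not for any Weyl-type estimate. You are right that the elevation $c_n$ is harmless (the per-subcolumn increment is still $1$, and the paper's Proposition \ref{P:isomo} makes the comparison with ordinary staircases precise), but as written your proof has a genuine gap at its central step.
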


\begin{remark} The requirement that $\frac{r_{n}^{2}}{h_{n}} \to 0$ is not necessary but one would need to bring the more complicated and technical techniques of \cite{CreutzSilva2010} in to prove it. \end{remark}

The remainder of the appendix is devoted the proof of Theorem \ref{T:realmixing}.

\begin{proposition}\label{P:finmeasest}
Every elevated staircase transformation is on a finite measure space.
\end{proposition}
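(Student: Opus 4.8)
I want to show that every elevated staircase transformation lives on a finite measure space, i.e. that the defining renormalization $X = [0,1)$ is legitimate. By the criterion quoted in the body of the paper (from \cite{CreutzSilva2010}), a rank-one transformation has finite measure precisely when
\[
\sum_{n} \frac{1}{r_{n} h_{n}} \sum_{i=0}^{r_{n}} s_{n,i} < \infty.
\]
So the whole proof reduces to estimating the spacer sums $\sum_{i=0}^{r_n} s_{n,i}$ for an elevated staircase and plugging into this series.

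\emph{Step 1: compute the spacer sum.} For an elevated staircase, $s_{n,i} = c_n + i$ for $0 \le i < r_n$ and $s_{n,r_n} = 0$, so
\[
\sum_{i=0}^{r_n} s_{n,i} = \sum_{i=0}^{r_n - 1} (c_n + i) = r_n c_n + \frac{r_n(r_n-1)}{2}.
\]
\emph{Step 2: divide by $r_n h_n$.} This gives
\[
\frac{1}{r_n h_n}\sum_{i=0}^{r_n} s_{n,i} = \frac{c_n}{h_n} + \frac{r_n - 1}{2 h_n} \le \frac{c_n}{h_n} + \frac{r_n}{h_n} = \frac{c_n + r_n}{h_n}.
\]
\emph{Step 3: sum over $n$ and invoke the hypothesis.} By the very definition of elevated staircase transformation, $\sum_n \frac{c_n + r_n}{h_n} < \infty$; hence $\sum_n \frac{1}{r_n h_n}\sum_{i=0}^{r_n} s_{n,i} < \infty$, and the finite-measure criterion is satisfied. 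One should also note in passing that the condition $\frac{r_n^2}{h_n} \to 0$ (part of the definition) already forces $\frac{r_n}{h_n} \to 0$, so each individual term is small, but this is not needed for convergence — the summability of $\sum \frac{c_n + r_n}{h_n}$ does all the work.

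\emph{Main obstacle.} Honestly, there is essentially no obstacle here: the statement is almost immediate once the spacer sum is written out, because the hypothesis $\sum \frac{c_n+r_n}{h_n} < \infty$ has been built into the definition of elevated staircase precisely so that this works. The only thing one has to be slightly careful about is bookkeeping on the indices — remembering that the last subcolumn has $s_{n,r_n} = 0$ rather than $c_n + r_n$, so the sum is over $0 \le i < r_n$ and equals an arithmetic series, not $(r_n+1)c_n$ plus a triangular number. After that the estimate $\frac{c_n}{h_n} + \frac{r_n-1}{2h_n} \le \frac{c_n+r_n}{h_n}$ is crude but sufficient, and appealing to the cited criterion finishes the proof.
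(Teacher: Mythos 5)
Your proof is correct and follows essentially the same route as the paper: both boil down to computing the spacer sum $r_n c_n + \tfrac{1}{2}r_n(r_n-1)$ and observing that the normalized spacer mass added at stage $n$ is at most $\tfrac{c_n+r_n}{h_n}$, which is summable by hypothesis. The only difference is that you invoke the finite-measure criterion quoted in the preliminaries as a black box, whereas the paper rederives it on the spot by bounding $\mu(C_{n+1}) \leq \bigl(1+\tfrac{c_n+r_n}{h_n}\bigr)\mu(C_n)$ and showing the resulting product converges.
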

\begin{proof}
	Writing $S_{n}$ for the union of the spacers added above the $n^{th}$ column,
	\[
	\mu(S_{n}) = (c_{n}r_n+\frac{1}{2}r_{n}(r_{n}-1))\mu(I_{n+1}) = \left(c_n\frac{r_n}{r_{n}+1}+\frac{1}{2}\frac{r_{n}(r_{n}-1)}{r_{n}+1}\right)\mu(I_{n}) \leq \frac{c_{n} + r_{n}}{h_{n}}~\mu(C_{n}),
	\]
	and therefore
	$
	\mu(C_{n+1}) = \mu(C_{n}) + \mu(S_{n}) \leq \big{(} 1 + \frac{c_{n} + r_{n}}{h_{n}} \big{)} \mu(C_{n}).
	$.
	Then
	$
	\mu(C_{n+1}) \leq \prod_{j=1}^{n} \big{(}1 + \frac{c_{j} + r_{j}}{h_{j}}\big{)}\mu(C_{1}),
	$
	meaning that
	$
	\log(\mu(C_{n+1})) \leq \log(\mu(C_{1})) + \sum_{j=1}^{n} \log(1 + \frac{c_{j} + r_{j}}{h_{j}}).
	$
	As $\frac{c_{n} + r_{n}}{h_{n}} \to 0$, since $\log(1+x) \approx x$ for $x \approx 0$,
	$
	\lim_{n} \log(\mu(C_{n+1})) \lesssim \log(\mu(C_{1})) + \sum_{j=1}^{\infty} \frac{c_{j} + r_{j}}{h_{j}} < \infty
	$
	gives that $T$ is on a finite measure space.
	\end{proof}
	
From here on, assume that all transformations $T$ are on probability spaces.
	
\begin{lemma}\label{L:subtofull}
Let $T$ be any rank-one transformation and $B$ be a union of levels in some column $C_{N}$.  Then for any $n \geq N$, $0 \leq a < h_{n}$ and $0 \leq i \leq r_{n}$,
\[
\mu(I_{n,a}^{[i]} \cap B) - \mu(I_{n,a}^{[i]})\mu(B) = \frac{1}{r_{n}+1}\big{(}\mu(I_{n,a} \cap B) - \mu(I_{n,a})\mu(B)\big{)}.
\]
\end{lemma}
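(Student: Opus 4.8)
The plan is to exploit the self-similar structure of the column construction. Fix $n \geq N$, a level index $0 \leq a < h_n$, and a subcolumn index $0 \leq i \leq r_n$. The key observation is that $B$, being a union of levels in $C_N$, is a union of levels in $C_n$ as well (each level of $C_N$ refines into levels of $C_n$), and hence $B$ respects the cutting of $C_n$ into subcolumns: for each level $I_{n,a}$ of $C_n$, either $I_{n,a} \subseteq B$ or $I_{n,a} \cap B = \emptyset$ is \emph{not} quite what we want — rather, the correct statement is that $B \cap I_{n,a}$, when nonempty, is itself a union of sublevels $I_{n,a}^{[j]}$ over a set of subcolumn indices $j$ that does not depend on $a$. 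Concretely, write $J \subseteq \{0, 1, \dots, r_n\}$ for this index set, so that $I_{n,a} \cap B = \bigcup_{j \in J} I_{n,a}^{[j]}$ for every $a$ with $I_{n,a}$ a ``$B$-level'' in the appropriate sense; more carefully, one argues level-by-level in $C_N$ and tracks how each $C_N$-level sits inside $C_n$.

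First I would make precise that all $r_n + 1$ sublevels $I_{n,a}^{[j]}$, $0 \leq j \leq r_n$, have equal width, so $\mu(I_{n,a}^{[j]}) = \frac{1}{r_n+1}\mu(I_{n,a})$ for each $j$, and likewise $\mu(I_{n,a}^{[i]} \cap B) = \frac{1}{r_n+1}\mu(I_{n,a} \cap B)$ once one knows that the indicator of $B$ restricted to $I_{n,a}$ is ``subcolumn-measurable'' in the sense above — indeed, by the affine identifications between the subcolumns of $C_n$, the set $B \cap I_{n,a}^{[i]}$ is the image under the natural affine map of $B \cap I_{n,a}^{[0]}$ scaled appropriately, but the cleanest route is: since $B$ is a union of $C_N$-levels and $n \geq N$, the trace of $B$ on any $C_n$-level $I_{n,a}$ is a union of $C_n$-sublevels, so by equal width, $\mu(I_{n,a}^{[i]} \cap B) = \frac{1}{r_n+1}\mu(I_{n,a} \cap B)$. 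Second, substitute both identities into the left-hand side:
\[
\mu(I_{n,a}^{[i]} \cap B) - \mu(I_{n,a}^{[i]})\mu(B) = \tfrac{1}{r_n+1}\mu(I_{n,a} \cap B) - \tfrac{1}{r_n+1}\mu(I_{n,a})\mu(B),
\]
and factoring out $\frac{1}{r_n+1}$ gives exactly the claimed identity.

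The main obstacle — really the only nontrivial point — is verifying the structural claim that $B \cap I_{n,a}$ is a union of $C_n$-sublevels for each $a$, equivalently that $B$ is a union of levels of $C_n$. This follows by induction on $n \geq N$: for $n = N$ it is the hypothesis; and if $B$ is a union of levels of $C_n$, then under the cut-and-stack passage from $C_n$ to $C_{n+1}$ each level $I_{n,a}$ splits into the sublevels $I_{n,a}^{[0]}, \dots, I_{n,a}^{[r_n]}$, each of which becomes a single level of $C_{n+1}$ (the newly added spacers are disjoint from $B$ since $B \subseteq C_N \subseteq C_n$), so $B$ is a union of levels of $C_{n+1}$. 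Granting this, the width computation and the algebra above complete the proof.
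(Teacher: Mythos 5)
Your induction establishing that $B$ is a union of levels of $C_n$ for every $n \geq N$ is correct and is exactly the structural fact the argument needs. The problem is what you do with it. You explicitly set aside the dichotomy ``$I_{n,a} \subseteq B$ or $I_{n,a} \cap B = \varnothing$'' as ``not quite what we want'' and replace it with the weaker claim that $B \cap I_{n,a}$ is a union of sublevels $I_{n,a}^{[j]}$, $j \in J$, from which you deduce, ``by equal width,'' that $\mu(I_{n,a}^{[i]} \cap B) = \frac{1}{r_n+1}\mu(I_{n,a} \cap B)$. That inference is a non sequitur: if $B \cap I_{n,a}$ were, say, the single sublevel $I_{n,a}^{[0]}$, then $\mu(I_{n,a}^{[0]} \cap B) = \frac{1}{r_n+1}\mu(I_{n,a})$ while $\frac{1}{r_n+1}\mu(I_{n,a} \cap B) = \frac{1}{(r_n+1)^2}\mu(I_{n,a})$, and for $i \neq 0$ the left side is $0$ while the right side is not. ``Union of equal-width sublevels'' yields the $\frac{1}{r_n+1}$ proportionality only when the index set $J$ is all of $\{0,\dots,r_n\}$ or empty --- that is, precisely when the dichotomy you discarded holds.

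The fix is one line, and it is the route the paper takes: since $B$ is a union of levels of $C_n$, either $I_{n,a} \subseteq B$, in which case $\mu(I_{n,a}^{[i]} \cap B) = \mu(I_{n,a}^{[i]}) = \frac{1}{r_n+1}\mu(I_{n,a}) = \frac{1}{r_n+1}\mu(I_{n,a} \cap B)$, or $I_{n,a} \cap B = \varnothing$, in which case $\mu(I_{n,a}^{[i]} \cap B) = 0 = \mu(I_{n,a} \cap B)$. In either case both identities you need hold, and the algebra in your second paragraph finishes the proof. So all the ingredients are present in your writeup, but the load-bearing step as stated is invalid; restore the all-or-nothing case split.
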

\begin{proof}
Since $B$ is a union of levels in $C_{N}$, it is also a union of levels in $C_{n}$.  Therefore $I_{n,a} \subseteq B$ or $I_{n,a} \cap B = \varnothing$.  When $I_{n,a} \subseteq B$, we have $\mu(I_{n,a}^{[i]} \cap B) = \mu(I_{n,a}^{[i]}) = \frac{1}{r_{n}+1}\mu(I_{n,a}) = \frac{1}{r_{n}+1}\mu(I_{n,a} \cap B)$ and when $I_{n,a} \cap B = \varnothing$, we have $\mu(I_{n,a}^{[i]} \cap B) = 0 = \mu(I_{n,a} \cap B)$.
\end{proof}

\begin{lemma} \label{inequalforMix} Let $T$ be an elevated staircase transformation with height sequence $\{h_n\}$. Let $I_{n,a}$ be the $a^{th}$ level in the $n^{th}$ column $C_{n}$ for $T$. Let $B$ be a union of levels in a column $C_{N}$ with $N \leq n$. Then for $k$ such that $ ki+\frac{1}{2}k(k-1) \leq a <h_n$,
	\begin{align}
	|\mu(T^{k(h_n+c_n)}(I_{n,a}) \cap B) - \mu(I_{n,a})\mu(B)| \leq \int \limits_{I_{n,a}} \Big{|}\frac{1}{r_n+1} \sum \limits_{i=0}^{r_n} \chi_{B} \circ T^{-ki-\frac{1}{2}k(k-1)}-\mu(B)\Big{|} d\mu + \frac{2k+2}{r_n+1}\mu(I_{n}). \nonumber
	\end{align}
\end{lemma}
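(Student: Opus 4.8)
The plan is to follow the geometry of the staircase construction and see how $T^{h_n+c_n}$ (and its iterates) moves the $r_n+1$ sublevels $I_{n,a}^{[i]}$ that make up the level $I_{n,a}$, where the superscript $[i]$ records which of the $r_n+1$ subcolumns of $C_n$ (used to build $C_{n+1}$) a point lies in. Viewing $C_n$ inside $C_{n+1}$, subcolumn $i$ consists of the $h_n$ original levels topped, for $i<r_n$, by $c_n+i$ spacers, so a direct height count gives
\[
T^{h_n+c_n}\big(I_{n,a}^{[i]}\big)=I_{n,a-i}^{[i+1]}\qquad\text{for }0\le i<r_n,\ a\ge i,
\]
the $i$ extra spacers on subcolumn $i$ being exactly what lowers the level index by $i$ on passing to subcolumn $i+1$. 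Iterating $k$ times,
\[
T^{k(h_n+c_n)}\big(I_{n,a}^{[i]}\big)=I^{[i+k]}_{n,\,a-ki-\frac{1}{2}k(k-1)}\qquad\text{for }0\le i\le r_n-k,
\]
which is legitimate because the hypothesis $ki+\frac{1}{2}k(k-1)\le a<h_n$ keeps every intermediate level index inside $[0,h_n)$.

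Next I would decompose $I_{n,a}=\bigsqcup_{i=0}^{r_n}I_{n,a}^{[i]}$ and correspondingly
\[
\mu\big(T^{k(h_n+c_n)}I_{n,a}\cap B\big)-\mu(I_{n,a})\mu(B)=\sum_{i=0}^{r_n}\Big(\mu\big(T^{k(h_n+c_n)}I_{n,a}^{[i]}\cap B\big)-\mu(I_{n,a}^{[i]})\mu(B)\Big).
\]
For a ``good'' index $0\le i\le r_n-k$, the image is the sublevel $I^{[i+k]}_{n,b}$ with $b=a-ki-\frac{1}{2}k(k-1)$, so by Lemma~\ref{L:subtofull} (applicable since $B$ is a union of levels of $C_N$ with $N\le n$) the $i$-th summand equals $\frac{1}{r_n+1}\big(\mu(I_{n,b}\cap B)-\mu(I_{n,b})\mu(B)\big)$; and since $T^{ki+\frac{1}{2}k(k-1)}$ takes $I_{n,b}$ onto $I_{n,a}$ within $C_n$, a measure-preserving change of variables rewrites this as $\frac{1}{r_n+1}\int_{I_{n,a}}\big(\chi_B\circ T^{-ki-\frac{1}{2}k(k-1)}-\mu(B)\big)\,d\mu$. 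Adding fictitiously the same expression for the remaining indices so as to complete the average over $i=0,\dots,r_n$, and then pulling the absolute value inside the integral, produces exactly the integral term on the right-hand side of the statement.

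The error comes from the ``bad'' indices, namely the at most $k$ values $i>r_n-k$, for which $T^{k(h_n+c_n)}$ carries $I_{n,a}^{[i]}$ out of the region of $C_{n+1}$ controlled by $C_n$. For each of these, both the true summand $\mu(T^{k(h_n+c_n)}I_{n,a}^{[i]}\cap B)-\mu(I_{n,a}^{[i]})\mu(B)$ and the fictitious term $\frac{1}{r_n+1}\int_{I_{n,a}}(\chi_B\circ T^{-ki-\frac{1}{2}k(k-1)}-\mu(B))\,d\mu$ are bounded in absolute value by $\mu(I_{n,a}^{[i]})=\frac{1}{r_n+1}\mu(I_n)$, so together they contribute at most $\frac{2k}{r_n+1}\mu(I_n)\le\frac{2k+2}{r_n+1}\mu(I_n)$; the triangle inequality then assembles the two pieces into the claimed bound.

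The step I expect to require genuine care is the first one: pinning down the index arithmetic in $T^{k(h_n+c_n)}I_{n,a}^{[i]}=I^{[i+k]}_{n,\,a-ki-\frac{1}{2}k(k-1)}$, verifying precisely which sublevels obey it, and checking that only $O(k)$ of them are exceptional, since that count is exactly what controls the error term $\frac{2k+2}{r_n+1}\mu(I_n)$. Everything after that — invoking Lemma~\ref{L:subtofull}, translating levels within $C_n$, and interchanging the absolute value with the integral — is routine.
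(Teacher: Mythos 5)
Your argument is correct and follows essentially the same route as the paper's proof: decompose $I_{n,a}$ into its sublevels, use the staircase identity $T^{k(h_n+c_n)}(I_{n,a}^{[i]})=I_{n,a-ki-\frac{1}{2}k(k-1)}^{[i+k]}$ for the good indices, pass from sublevels to full levels via Lemma \ref{L:subtofull}, and absorb the $O(k)$ exceptional sublevels into the $\frac{2k+2}{r_n+1}\mu(I_{n})$ error term. The only cosmetic difference is that you account for the dropped true terms and the added fictitious terms simultaneously (giving the marginally sharper constant $\frac{2k}{r_n+1}$), whereas the paper drops and re-adds in two separate steps, each costing $\frac{k+1}{r_n+1}\mu(I_{n,a})$.
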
	

\begin{proof} Write $I_{n,a}$ as a disjoint union of all the sublevels of $I_{n,a}$ so that
	\begin{align}
	|\mu(T^{k(h_n+c_n)}(I_{n,a}) \cap B) &- \mu(I_{n,a})\mu(B)| =|\sum \limits_{i=0}^{r_{n}} \mu(T^{k(h_n+c_n)}(I^{[i]}_{n,a}) \cap B) - \mu(I^{[i]}_{n,a})\mu(B)|. \nonumber
	\end{align}

Now for $i<r_n$, $T^{h_n}(I_{n,a}^{[i]})=T^{-i-c_n}(I_{n,a}^{[i+1]})$ and so $T^{h_n+c_n}(I_{n,a}^{[i]})=T^{-i}(I_{n,a}^{[i+1]})$. Applying this $k$ times, for $i < r_n-k$, we get $T^{k(h_n+c_n)}(I_{n,a}^{[i]}) = T^{-i-(i+1)-...-(i+k-1)}(I_{n,a}^{[i+k]})=T^{-ki-\frac{1}{2}k(k-1)}(I_{n,a}^{[i+k]})$. So for $ki+\frac{1}{2}k(k-1) \leq a <h_n$,
	\begin{align}
	|\mu(T^{k(h_n+c_n)}&(I_{n,a}) \cap B) - \mu(I_{n,a})\mu(B)| \nonumber
	= |\sum_{i=0}^{r_n}\mu(T^{k(h_n + c_n)}(I_{n,a}^{[i]}) \cap B) - \mu(I_{n,a}^{[i]})\mu(B)| \\
	&\leq |\sum \limits_{i=0}^{r_{n}-(k+1)} \mu(T^{-ki-\frac{1}{2}k(k-1)}(I^{[i+k]}_{n,a}) \cap B) - \mu(I^{[i+k]}_{n,a})\mu(B)| + \frac{k+1}{r_{n}+1}\mu(I_{n,a}) \nonumber \\
	&= |\sum \limits_{i=0}^{r_{n}-(k+1)} \mu(I^{[i+k]}_{n,a-ki-\frac{1}{2}k(k-1)} \cap B) - \mu(I^{[i+k]}_{n,a-ki-\frac{1}{2}k(k-1)})\mu(B)| + \frac{k+1}{r_n+1}\mu(I_{n,a}). \nonumber 
	\end{align}
	By Lemma \ref{L:subtofull} then
	\begin{align}
	|\mu(T^{k(h_n+c_n)}&(I_{n,a}) \cap B) - \mu(I_{n,a})\mu(B)| \nonumber \\
	&\leq  |\frac{1}{r_n+1}\sum \limits_{i=0}^{r_{n}-(k+1)}\mu(I_{n,a-ki-\frac{1}{2}k(k-1)} \cap B) - \mu(I_{n,a-ki-\frac{1}{2}k(k-1)})\mu(B) | + \frac{k+1}{r_n+1}\mu(I_{n,a}) \nonumber  \\
	&= |\frac{1}{r_n+1} \sum \limits_{i=0}^{r_n-(k+1)}\mu(T^{-ki-\frac{1}{2}k(k-1)}(I_{n,a}) \cap B) - \mu(I_{n,a})\mu(B)| + \frac{k+1}{r_n+1}\mu(I_{n,a}) \nonumber  \\
	&\leq |\frac{1}{r_n+1} \sum \limits_{i=0}^{r_n}\mu(T^{-ki-\frac{1}{2}k(k-1)}(I_{n,a}) \cap B) - \mu(I_{n,a})\mu(B)| + 2\frac{k+1}{r_n+1}\mu(I_{n,a}) \nonumber \\
	&\leq \int \limits_{I_{n,a}} \Big{|}\frac{1}{r_n+1} \sum \limits_{i=0}^{r_n} \chi_{B} \circ T^{-ki-\frac{1}{2}k(k-1)}-\mu(B)\Big{|} d\mu + \frac{2k+2}{r_n+1}\mu(I_{n,a}). \qedhere \nonumber
	\end{align}
\end{proof}

\begin{definition}
A sequence $\{ t_{n} \}$ is \textbf{mixing} for $T$ when for all measurable sets $A$ and $B$,
\[
\lim_{n \to \infty} \mu(T^{n}A \cap B) = \mu(A)\mu(B).
\]
\end{definition}

\begin{definition}[\cite{CreutzSilva2004}]
	A sequence $\{t_n\}$ is \textbf{rank-one uniform mixing} for $T$ when for every union of levels $B$, 
	\[ \lim_{n \to \infty}\sum \limits_{a=0}^{h_{n}-1} |\mu(T^{t_n}(I_{n,a}) \cap B) - \mu(I_{n,a})\mu(B)| = 0 .\]
\end{definition}

\begin{proposition}[\cite{CreutzSilva2004}] \label{UniformToMixing} If $\{t_n\}$ is rank-one uniform mixing for $T$, then $\{t_n\}$ is mixing for $T$.\end{proposition}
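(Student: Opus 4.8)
The plan is a standard density-and-approximation reduction. First I would record the structural fact that the collection of all finite unions of levels $I_{n,a}$ (over all columns $C_n$ and all $0 \le a < h_n$) is dense in the measure algebra of $(X, \mathcal{B}(X), \mu)$: the columns are nested with $C_n \subseteq C_{n+1}$, the level partition of $C_{n+1}$ refines that of $C_n$ on $C_n$, the levels of $C_n$ are intervals of common width $\mu(C_n)/h_n \to 0$, and $\mu(C_n) \to 1$ (the columns exhaust $X$; cf.\ the proof of Proposition \ref{P:hr}). So, given measurable sets $A, B$ and $\varepsilon > 0$, I can fix $N$ and unions of levels $A'$ and $B'$ of the single column $C_N$ with $\mu(A \triangle A') < \varepsilon$ and $\mu(B \triangle B') < \varepsilon$.

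Next I would push the problem onto $A', B'$. Since $T^{t_n}$ is measure-preserving, $\mu(T^{t_n} A \triangle T^{t_n} A') = \mu(A \triangle A')$, so $|\mu(T^{t_n} A \cap B) - \mu(T^{t_n} A' \cap B')| < 2\varepsilon$ for every $n$, and likewise $|\mu(A)\mu(B) - \mu(A')\mu(B')| < 2\varepsilon$. Hence it is enough to show $\mu(T^{t_n} A' \cap B') \to \mu(A')\mu(B')$ when $A'$ and $B'$ are unions of levels of $C_N$. For $n \ge N$, I would rewrite $A'$ as a disjoint union $\bigcup_{a \in S} I_{n,a}$ of levels of $C_n$, so that $\mu(T^{t_n} A' \cap B') = \sum_{a \in S} \mu(T^{t_n} I_{n,a} \cap B')$ and $\mu(A')\mu(B') = \sum_{a \in S} \mu(I_{n,a})\mu(B')$; subtracting termwise and applying the triangle inequality,
\begin{align*}
\big| \mu(T^{t_n} A' \cap B') - \mu(A')\mu(B') \big|
&\le \sum_{a \in S} \big| \mu(T^{t_n} I_{n,a} \cap B') - \mu(I_{n,a})\mu(B') \big| \\
&\le \sum_{a=0}^{h_n - 1} \big| \mu(T^{t_n} I_{n,a} \cap B') - \mu(I_{n,a})\mu(B') \big|.
\end{align*}
The right-hand side tends to $0$ as $n \to \infty$ by the definition of rank-one uniform mixing applied to the union of levels $B'$. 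Combining the two paragraphs and letting $\varepsilon \to 0$ yields $\mu(T^{t_n} A \cap B) \to \mu(A)\mu(B)$, which is the claim.

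The whole argument is essentially bookkeeping, and I do not anticipate a real obstacle. The only point demanding more than a line is the density statement in the first paragraph --- the familiar fact that the level partitions of a rank-one tower generate the Borel $\sigma$-algebra modulo null sets --- and everything after that is additivity of $\mu$ over a level partition plus the triangle inequality. (This is precisely the reduction used in \cite{CreutzSilva2004}, whence the attribution.)
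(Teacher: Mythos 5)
Your argument is correct and is exactly the approach the paper takes: its entire proof of this proposition is the single sentence ``Every measurable set can be arbitrarily well approximated by a union of levels,'' and your write-up simply supplies the routine approximation and triangle-inequality bookkeeping that sentence leaves implicit.
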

\begin{proof} 
Every measurable set can be arbitrarily well approximated by a union of levels.
\end{proof}

\begin{theorem}\label{kheightUniMixElevated} Let $T$ be an elevated staircase transformation with height sequence $\{h_n\}$ and $k \: \in \mathbb{N}$ such that $T^k$ is ergodic. Then the sequence $\{k(h_n+c_n)\}$ is rank-one uniform mixing for $T$.
 \end{theorem}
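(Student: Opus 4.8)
The plan is to feed the estimate of Lemma~\ref{inequalforMix} into the mean ergodic theorem for the ergodic transformation $T^{k}$. Fix a union of levels $B$, say a union of levels of $C_{N}$, and fix $k$ with $T^{k}$ ergodic. For $n \geq N$ large enough that $r_{n} > k+1$, I would split $\sum_{a=0}^{h_{n}-1}$ over two sets of levels: the ``boundary'' levels $a$ for which Lemma~\ref{inequalforMix} does not apply, namely those with $a < k(r_{n}-k-1) + \tfrac{1}{2}k(k-1)$, of which there are $O(kr_{n})$; and all remaining levels. For a boundary level, both $\mu(T^{k(h_{n}+c_{n})}(I_{n,a}) \cap B)$ and $\mu(I_{n,a})\mu(B)$ lie in $[0,\mu(I_{n})]$, so the corresponding summand is at most $\mu(I_{n})$. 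Hence the boundary levels contribute at most $O(kr_{n})\mu(I_{n}) = O(kr_{n})\mu(C_{n})/h_{n} \leq O(kr_{n}/h_{n}) \leq O(kr_{n}^{2}/h_{n}) \to 0$, using the standing hypothesis $r_{n}^{2}/h_{n} \to 0$ of an elevated staircase.

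For the remaining levels I would apply Lemma~\ref{inequalforMix} and sum. The error terms $\tfrac{2k+2}{r_{n}+1}\mu(I_{n})$ sum to at most $\tfrac{2k+2}{r_{n}+1}\sum_{a}\mu(I_{n}) \leq \tfrac{2k+2}{r_{n}+1}\mu(C_{n}) \leq \tfrac{2k+2}{r_{n}+1} \to 0$, since $r_{n} \to \infty$. For the integral terms, disjointness of the levels $I_{n,a}$ gives
\[
\sum_{a}\int_{I_{n,a}}\Big|\tfrac{1}{r_{n}+1}\textstyle\sum_{i=0}^{r_{n}}\chi_{B}\circ T^{-ki-\frac12 k(k-1)} - \mu(B)\Big|\,d\mu \;\leq\; \int_{X}\Big|\tfrac{1}{r_{n}+1}\textstyle\sum_{i=0}^{r_{n}}\chi_{B}\circ T^{-ki-\frac12 k(k-1)} - \mu(B)\Big|\,d\mu .
\]

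The core of the argument is showing this last integral tends to $0$. Set $g = \chi_{B}\circ T^{-\frac12 k(k-1)} - \mu(B)$; since $T$ preserves $\mu$ we have $\int g\,d\mu = 0$, and $g$ is bounded, hence in $L^{2}$. Using $T^{-ki-\frac12 k(k-1)} = (T^{k})^{-i}\circ T^{-\frac12 k(k-1)}$ and constancy of $\mu(B)$, the integrand equals $\big|\tfrac{1}{r_{n}+1}\sum_{i=0}^{r_{n}} g\circ (T^{k})^{-i}\big|$. Because $T^{k}$ is ergodic (and measure-preserving), the mean ergodic theorem applied to $(X,\mathcal{B},\mu,T^{k})$ and $g$ gives $\int\big|\tfrac{1}{m}\sum_{i=0}^{m-1} g\circ (T^{k})^{-i}\big|^{2}\,d\mu \to 0$ as $m\to\infty$, and Cauchy--Schwarz converts this to $L^{1}$ convergence; applying it with $m = r_{n}+1 \to \infty$ shows the integral term vanishes in the limit. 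Adding the three contributions yields $\sum_{a=0}^{h_{n}-1}|\mu(T^{k(h_{n}+c_{n})}(I_{n,a})\cap B) - \mu(I_{n,a})\mu(B)| \to 0$, which is exactly rank-one uniform mixing of $\{k(h_{n}+c_{n})\}$ for $T$.

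Since Lemma~\ref{inequalforMix} has already performed the geometric work of rewriting the displacement $k(h_{n}+c_{n})$ as a staircase-type average over subcolumns, the main obstacle is organizational rather than conceptual: correctly identifying and bounding the $O(r_{n})$ boundary levels for which that lemma fails, and recognizing the averaged quantity as a genuine Birkhoff average of a mean-zero function under $T^{k}$, so that ergodicity of $T^{k}$ — the only hypothesis available — is precisely what is invoked. I expect the fussiest points to be pinning down the index ranges (ensuring $a - ki - \tfrac12 k(k-1) \geq 0$ throughout the sum) and verifying that each of the three error terms is $o(1)$ using only $r_{n}\to\infty$ and $r_{n}^{2}/h_{n}\to 0$.
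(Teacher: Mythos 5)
Your proposal is correct and follows essentially the same route as the paper: split off the $O(kr_n)$ boundary levels (controlled by $r_n/h_n \to 0$), sum the $\frac{2k+2}{r_n+1}\mu(I_{n,a})$ error terms from Lemma \ref{inequalforMix} over the disjoint levels, and kill the averaged integral via the mean ergodic theorem for $T^k$ plus Cauchy--Schwarz. The only cosmetic difference is that you absorb the fixed shift $T^{-\frac{1}{2}k(k-1)}$ into the mean-zero function $g$, whereas the paper discards it directly using measure-preservation; these are interchangeable.
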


\begin{proof} By Lemma \ref{inequalforMix}, for $a$ such that $ ki+\frac{1}{2}k(k-1)\leq a <h_n$, since $ki + \frac{1}{2}k(k-1) \leq kr_n + k^{2}$,
	\begin{align}
	\sum \limits_{a=0}^{h_{n}-1} &|\mu(T^{k(h_n+c_n)}(I_{n,a}) \cap B) - \mu(I_{n,a})\mu(B)| \nonumber \\
	&\leq (kr_{n}+k^2)\mu(I_n) + \sum \limits_{a=kr_{n}+r_n^2}^{h_n-1} \left(\int_{I_{n,a}} \Big{|}\frac{1}{r_n+1} \sum \limits_{i=0}^{r_n} \chi_{B} \circ T^{-ki-\frac{1}{2}k(k-1)}-\mu(B)\Big{|} d\mu + \frac{2k+2}{r_n+1}\mu(I_{n,a}) \right) \nonumber \\
	&\leq (kr_{n}+k^2)\mu(I_{n}) + \int \Big{|}\frac{1}{r_n+1} \sum \limits_{i=0}^{r_n} \chi_{B} \circ T^{-ki-\frac{1}{2}k(k-1)}-\mu(B)\Big{|} d\mu +h_{n}\left(\frac{2k+2}{r_n+1}\right)\mu(I_{n}), \nonumber
	\end{align}
	using that the levels are disjoint.  Clearly $(kr_n + k^2)\mu(I_{n}) \leq \frac{kr_{n}}{h_{n}}+\frac{k^2}{h_{n}} \to 0$ and $h_{n}\frac{2k+2}{r_{n}+1}\mu(I_{n}) \leq \frac{2k+2}{r_{n}+1} \to 0$.  
	That $T$ is measure-preserving and the mean ergodic theorem applied to $T^{k}$ give
	\begin{align*}
	\int \Big{|} \frac{1}{r_n+1} \sum \limits_{i=0}^{r_n} \chi_{B} \circ T^{-ki-\frac{1}{2}k(k-1)}-\mu(B\Big{|}| d\mu &\leq \int \Big{|}\frac{1}{r_n+1} \sum \limits_{i=0}^{r_n} \chi_{B} \circ T^{-ki}-\mu(B)\Big{|} d\mu \\
	&\leq \Big{(} \int\Big{|}\frac{1}{r_n+1} \sum \limits_{i=0}^{r_n} \chi_{B} \circ T^{-ki}-\mu(B)\Big{|}^2 d\mu\Big{)}^{1/2}
	\to 0. \qedhere \end{align*}
	 \end{proof}

\begin{corollary}\label{C:Tkerg}
	If $T$ is an elevated staircase transformation then $T^k$ is ergodic for each fixed $k$.
\end{corollary}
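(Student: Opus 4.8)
The plan is to prove the apparently stronger statement that $T$ has no nonconstant eigenfunctions, and then deduce from this that $T^{k}$ is ergodic for every $k$. First, since every rank-one transformation is ergodic, $T$ itself is ergodic, so Theorem \ref{kheightUniMixElevated} applies with $k=1$ (its hypothesis is merely that $T^{1}=T$ is ergodic, so there is no circularity — we never invoke the theorem for $k\geq 2$). This gives that $\{h_{n}+c_{n}\}$ is rank-one uniform mixing for $T$, and hence by Proposition \ref{UniformToMixing} it is a mixing sequence: $\mu(T^{h_{n}+c_{n}}A\cap B)\to\mu(A)\mu(B)$ for all measurable $A,B$.

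Next I would upgrade this to an $L^{2}$ statement. Since simple functions are dense in $L^{2}(\mu)$, the operators $f\mapsto f\circ T^{t}$ are isometries of $L^{2}(\mu)$, and the bilinear expression $\int (f\circ T^{t})\,\overline{g}\,d\mu-\big(\int f\,d\mu\big)\overline{\big(\int g\,d\mu\big)}$ is bounded by $2\len{f}_{2}\len{g}_{2}$, the set-mixing statement upgrades (by the usual density-plus-uniform-boundedness argument) to $\int (f\circ T^{h_{n}+c_{n}})\,\overline{g}\,d\mu\to\big(\int f\,d\mu\big)\overline{\big(\int g\,d\mu\big)}$ for all $f,g\in L^{2}(\mu)$. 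Now suppose $g\in L^{2}(\mu)$ is nonconstant with $g\circ T=\zeta g$ for some scalar $\zeta$. Because $f\mapsto f\circ T$ is a unitary operator, $|\zeta|=1$; because $T$ is ergodic, $\zeta\neq 1$ (otherwise $g$ would be a nonconstant invariant function), and consequently $\int g\,d\mu=\zeta\int g\,d\mu$ forces $\int g\,d\mu=0$. Taking $f=g$ in the $L^{2}$-mixing statement yields $\zeta^{h_{n}+c_{n}}\len{g}_{2}^{2}\to 0$, which is impossible since $g\neq 0$ and $|\zeta^{h_{n}+c_{n}}|=1$. Hence $T$ admits no nonconstant eigenfunction.

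Finally I would conclude total ergodicity. Suppose some $T^{k}$ were not ergodic; then there is a nonconstant $f\in L^{2}(\mu)$ with $f\circ T^{k}=f$. Writing $\zeta_{k}=e^{2\pi i/k}$ and $g_{j}=\tfrac1k\sum_{m=0}^{k-1}\zeta_{k}^{-jm}\,f\circ T^{m}$ for $0\leq j\leq k-1$, one checks (using $f\circ T^{k}=f$) that $f=\sum_{j}g_{j}$ and $g_{j}\circ T=\zeta_{k}^{j}g_{j}$. The function $g_{0}=\tfrac1k\sum_{m}f\circ T^{m}$ is $T$-invariant, hence constant by ergodicity of $T$, so some $g_{j}$ with $1\leq j\leq k-1$ must be nonconstant. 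But that $g_{j}$ is then a nonconstant eigenfunction of $T$ with eigenvalue $\zeta_{k}^{j}\neq 1$, contradicting the previous paragraph. Therefore $T^{k}$ is ergodic for every $k$.

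The argument is essentially routine given Theorem \ref{kheightUniMixElevated}; the only points that need care are confirming that the use of that theorem here is not circular (it is not, since only $k=1$ is used and $T$ is ergodic a priori) and the standard density/isometry upgrade from set-mixing to $L^{2}$-mixing — so I do not expect a genuine obstacle.
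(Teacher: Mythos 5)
Your proof is correct and follows essentially the same route as the paper: apply Theorem \ref{kheightUniMixElevated} with $k=1$ (using that every rank-one transformation is ergodic, so there is no circularity) to get that $\{h_n+c_n\}$ is a mixing sequence, then conclude total ergodicity. The only difference is that the paper simply cites the standard fact that a transformation admitting a mixing sequence is weakly mixing (hence totally ergodic), whereas you prove that fact in full via the eigenfunction argument; both versions are sound.
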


\begin{proof}
	Using Theorem \ref{kheightUniMixElevated} with $k=1$, since $T$ is ergodic we have that $\{h_n+c_n\}$ is uniform mixing, hence mixing by Proposition \ref{UniformToMixing}.  The existence of a mixing sequence for $T$ implies $T$ is weakly mixing hence each power of $T$ is ergodic.
\end{proof}

\begin{lemma} \label{between} 
Let $T$ be a rank-one transformation and $\{ c_{n} \}$ a sequence such that $\frac{c_{n}}{h_{n}} \to 0$.  If $q\in\mathbb{N}$ and $\{ q(h_{n}+c_{n}) \}$ and $\{ (q+1)(h_{n} + c_{n}) \}$ are rank-one uniform mixing and $\{ t_{n} \}$ is a sequence such that $q(h_{n} + c_{n}) \leq t_{n} < (q+1)(h_{n}+c_{n})$ for all $n$ then $\{ t_{n} \}$ is rank-one uniform mixing.
\end{lemma}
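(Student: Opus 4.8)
The plan is to interpolate between the two hypothesized rank-one uniform mixing sequences. Write $t_{n} = q(h_{n}+c_{n}) + j_{n}$ with $0 \leq j_{n} < h_{n}+c_{n}$, and note that equivalently $t_{n} = (q+1)(h_{n}+c_{n}) - j_{n}'$ where $j_{n}' = (h_{n}+c_{n}) - j_{n}$, so $0 < j_{n}' \leq h_{n}+c_{n}$. Fix a union of levels $B$ (a union of levels in some column $C_{N}$) and consider $n \geq N$. The governing observation is that for most levels $I_{n,a}$, either the last $j_{n}$ applications of $T$ stay inside $C_{n}$, so that $T^{t_{n}}I_{n,a} = T^{q(h_{n}+c_{n})}I_{n,a+j_{n}}$, or the first $j_{n}'$ applications of $T^{-1}$ stay inside $C_{n}$, so that $T^{t_{n}}I_{n,a} = T^{(q+1)(h_{n}+c_{n})}I_{n,a-j_{n}'}$; only $O(c_{n})$ levels near the top of the column fail both conditions, and their contribution is negligible because $c_{n}/h_{n} \to 0$.

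Concretely, I would partition $\{0,1,\dots,h_{n}-1\}$ into $S_{1} = \{a : a+j_{n} < h_{n}\}$, $S_{2} = \{a : a \geq j_{n}'\}$, and $S_{3}$ the remaining indices. Since $j_{n}' = h_{n}+c_{n}-j_{n} \geq h_{n}-j_{n}$, the sets $S_{1}$ and $S_{2}$ are disjoint, and every index in $S_{3}$ lies in the integer interval $[h_{n}-j_{n},\, h_{n}+c_{n}-j_{n})$, so $|S_{3}| \leq c_{n}$. For $a \in S_{1}$ we have $T^{j_{n}}I_{n,a} = I_{n,a+j_{n}}$, hence $T^{t_{n}}I_{n,a} = T^{q(h_{n}+c_{n})}I_{n,a+j_{n}}$; since all levels of $C_{n}$ have equal measure and $a \mapsto a+j_{n}$ is injective from $S_{1}$ into $\{0,\dots,h_{n}-1\}$,
\[
\sum_{a \in S_{1}} |\mu(T^{t_{n}}I_{n,a} \cap B) - \mu(I_{n,a})\mu(B)| \leq \sum_{b=0}^{h_{n}-1} |\mu(T^{q(h_{n}+c_{n})}I_{n,b} \cap B) - \mu(I_{n,b})\mu(B)| \to 0
\]
by rank-one uniform mixing of $\{q(h_{n}+c_{n})\}$. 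For $a \in S_{2}$ we have $T^{-j_{n}'}I_{n,a} = I_{n,a-j_{n}'}$, hence $T^{t_{n}}I_{n,a} = T^{(q+1)(h_{n}+c_{n})}I_{n,a-j_{n}'}$, and the same injectivity and measure-invariance argument bounds the $S_{2}$-sum by $\sum_{b=0}^{h_{n}-1} |\mu(T^{(q+1)(h_{n}+c_{n})}I_{n,b} \cap B) - \mu(I_{n,b})\mu(B)| \to 0$.

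Finally, for $a \in S_{3}$ I would use only the trivial bound $|\mu(T^{t_{n}}I_{n,a} \cap B) - \mu(I_{n,a})\mu(B)| \leq 2\mu(I_{n,a}) = 2\mu(I_{n})$, so that, using $\mu(I_{n}) \leq 1/h_{n}$, the $S_{3}$-contribution is at most $2|S_{3}|\mu(I_{n}) \leq 2c_{n}/h_{n} \to 0$. Adding the three estimates gives $\sum_{a=0}^{h_{n}-1}|\mu(T^{t_{n}}I_{n,a}\cap B) - \mu(I_{n,a})\mu(B)| \to 0$, which is exactly rank-one uniform mixing of $\{t_{n}\}$. I do not anticipate a serious obstacle here: the only point requiring care is the bookkeeping at the top of the column — pinning down precisely which levels fall into $S_{3}$ and checking that there are at most $c_{n}$ of them — so that the hypothesis $c_{n}/h_{n}\to 0$ (the single structural assumption actually used beyond the two given mixing sequences) closes the argument.
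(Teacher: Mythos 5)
Your proposal is correct and follows essentially the same argument as the paper: the same three-way split of the levels (bottom part pushed up to $q(h_n+c_n)$, top part pulled down to $(q+1)(h_n+c_n)$, and at most $c_n$ exceptional levels bounded trivially and killed by $c_n/h_n \to 0$), with only cosmetic differences in notation and a slightly looser (but harmless) constant on the exceptional term.
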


\begin{proof} 
For $0 \leq a < q(h_n+c_n)-t_n+h_n$, we have $0 \leq t_n-q(h_n+c_n) \leq t_n+a-q(h_n+c_n)<h_n$, so
	\[T^{t_n}(I_{n,a})=T^{t_n+a}(I_{n,0})=T^{q(h_n+c_n)}(I_{n,t_n+a-q(h_n+c_n)}).\]
For $(q+1)(h_n+c_n)-t_n \leq a < h_n$, we have $0 \leq t_n+a-(q+1)(h_n+c_n)<a<h_n$, so 
\[T^{t_n}(I_{n,a})=T^{t_n+a}(I_{n,0})=T^{(q+1)(h_n+c_n)}(I_{n,t_n+a-(q+1)(h_n+c_n)}).\]
For a union of levels $B$ in $C_N$ and $n \geq N$,
\begin{align}
	\sum \limits_{a=0}^{h_n-1} &|\mu(T^{t_n}(I_{n,a} \cap B) - \mu(I_{n,a})\mu(B)|  \nonumber \\
&\leq \sum \limits_{a=0}^{q(h_n+c_n)-t_n+h_n-1} |\mu(T^{q(h_n+c_n)}I_{n,t_n+a-q(h_n+c_n)} \cap B) - \mu(I_{n})\mu(B)| + c_{n}\mu(I_{n}) \nonumber \\
&\quad\quad\quad\quad  +\sum \limits_{a=(q+1)(h_n+c_n)-t_n}^{h_n-1} |\mu(T^{(q+1)(h_n+c_n)}I_{n,t_n+a-(q+1)(h_n+c_n)} \cap B) - \mu(I_{n})\mu(B)| \nonumber \\
&\leq \sum \limits_{b=0}^{h_n-1} |\mu(T^{q(h_n+c_n)}I_{n,b} \cap B) - \mu(I_{n})\mu(B)| + c_{n}\mu(I_{n}) \nonumber \\
&\quad\quad\quad\quad + \sum \limits_{b=0}^{h_n-1} |\mu(T^{(q+1)(h_n+c_n)}I_{n,b} \cap B) - \mu(I_{n})\mu(B) | \to 0 \nonumber 
\end{align}	
since $\{q(h_n+c_n)\}$, $\{(q+1)(h_n+c_n)\}$ are rank-one uniform mixing and $c_{n}\mu(I_{n}) \leq \frac{c_{n}}{h_{n}} \to 0$.
\end{proof} 

\begin{proposition}\label{P:CCC}
Let $T$ be a rank-one transformation and $\{ c_{n} \}$ a sequence such that $\frac{c_{n}}{h_{n}} \to 0$.  If $k \in \mathbb{N}$ and $\{ q(h_{n}+c_{n}) \}$ is rank-one uniform mixing for each $q \leq k+1$ and $\{ t_{n} \}$ is a sequence such that $h_{n} + c_{n} \leq t_{n} < (k+1)(h_{n}+c_{n})$ for all $n$ then $\{ t_{n} \}$ is mixing.
\end{proposition}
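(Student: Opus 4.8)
The plan is to partition the index set according to which ``band'' $[q(h_n+c_n),(q+1)(h_n+c_n))$ the term $t_n$ lands in, and then apply Lemma~\ref{between} band by band. Concretely, for each $q \in \{1,\dots,k\}$ I would introduce an auxiliary sequence $\{t_n^{(q)}\}$ defined by $t_n^{(q)} = t_n$ whenever $q(h_n+c_n) \leq t_n < (q+1)(h_n+c_n)$, and $t_n^{(q)} = q(h_n+c_n)$ otherwise. By construction $q(h_n+c_n) \leq t_n^{(q)} < (q+1)(h_n+c_n)$ for \emph{every} $n$ (using $h_n+c_n \geq 1 > 0$), and since $q \leq k$, both $\{q(h_n+c_n)\}$ and $\{(q+1)(h_n+c_n)\}$ are rank-one uniform mixing by hypothesis. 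Combined with $c_n/h_n \to 0$, Lemma~\ref{between} then yields that $\{t_n^{(q)}\}$ is rank-one uniform mixing, and hence mixing by Proposition~\ref{UniformToMixing}.

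Next I would patch these finitely many sequences together. Fix measurable sets $A,B$ and $\epsilon > 0$. For each $q \in \{1,\dots,k\}$, mixing of $\{t_n^{(q)}\}$ provides $N_q$ with $|\mu(T^{t_n^{(q)}}A \cap B) - \mu(A)\mu(B)| < \epsilon$ for all $n \geq N_q$; put $N = \max_{1 \leq q \leq k} N_q$. For any $n \geq N$, the hypothesis $h_n+c_n \leq t_n < (k+1)(h_n+c_n)$ forces $t_n$ into exactly one band, i.e.\ there is a unique $q \in \{1,\dots,k\}$ with $q(h_n+c_n) \leq t_n < (q+1)(h_n+c_n)$, and for that $q$ we have $t_n^{(q)} = t_n$; therefore $|\mu(T^{t_n}A \cap B) - \mu(A)\mu(B)| < \epsilon$. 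Since $A$, $B$, and $\epsilon$ were arbitrary, $\{t_n\}$ is mixing.

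There is no serious obstacle here: the argument is a finite bookkeeping over the $k$ possible bands, and the only point requiring a little care is that Lemma~\ref{between} is stated for sequences indexed by all of $\mathbb{N}$ (with a fixed $q$), which is why I pad $\{t_n^{(q)}\}$ with the harmless value $q(h_n+c_n)$ outside its band rather than passing to subsequences. All the genuine analytic content — the mean ergodic estimate, the sublevel-to-level reduction, the interpolation between consecutive heights — has already been carried out in Lemma~\ref{between} and the results preceding it.
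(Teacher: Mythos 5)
Your proof is correct and follows essentially the same route as the paper's: both decompose according to which band $[q(h_n+c_n),(q+1)(h_n+c_n))$ contains $t_n$ and invoke Lemma~\ref{between} plus Proposition~\ref{UniformToMixing} for each of the finitely many bands. The only difference is bookkeeping — the paper handles the fact that Lemma~\ref{between} wants a whole sequence in one band by extracting, from any subsequence, a further subsequence on which the band index is constant (and using that a sequence is mixing iff every subsequence has a mixing subsequence), whereas you pad each band's sequence with the harmless value $q(h_n+c_n)$; both devices work equally well.
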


\begin{proof}
	Since $t_{n} < (k+1)(h_n+c_n)$, there is some $q_n \leq q$ such that $q_n(h_n+c_n) \leq t_n < (q_n+1)(h_n+c_n)$. Let $\{t_{n_j}\}$ be any subsequence of $\{t_n\}$. Since $q_n \leq k$ for all $n$ and $q$ is fixed, there exists a further subsequence $\{t_{n_{j_k}}\}$ on which $q_{n_{j_k}}$ is constant. By Lemma \ref{between} and Proposition \ref{UniformToMixing}, $\{t_{n_{j_k}}\}$ is mixing. As every subsequence of $\{t_n\}$ has a mixing subsequence, $\{t_n\}$ is mixing.
\end{proof}

\begin{lemma}\label{L:tnfixedl2} Let $T$ be a measure-preserving transformation.  If for each  fixed $\ell \in \mathbb{N}$, $\{\ell t_n\}$ is mixing, then for any $\epsilon > 0$ there exists $L$ and $N$ such that for all $n \geq N$,
	$\int |\frac{1}{L} \sum \limits_{\ell=1}^{L} \chi_{B} \circ T^{-\ell t_n}-\mu(B)| d\mu < \epsilon.$
\end{lemma}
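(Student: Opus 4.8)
The plan is to deduce the statement from the hypothesis that $\{\ell t_n\}$ is mixing for each fixed $\ell$, by expanding the square of the averaged ergodic-type sum and pushing everything through a finite linear combination of mixing correlations. First I would fix $\epsilon > 0$ and a union of levels $B$ (it suffices to handle such $B$, or one may simply take $B$ measurable as stated). The key observation is that for a measure-preserving $T$,
\[
\int \Big|\frac{1}{L}\sum_{\ell=1}^{L}\chi_B\circ T^{-\ell t_n}-\mu(B)\Big|\,d\mu
\leq \Big(\int \Big|\frac{1}{L}\sum_{\ell=1}^{L}\chi_B\circ T^{-\ell t_n}-\mu(B)\Big|^2\,d\mu\Big)^{1/2}
\]
by Cauchy--Schwarz, so it is enough to make the $L^2$-quantity small.

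Next I would expand the square. Writing $f = \chi_B - \mu(B)$, so $\int f\,d\mu = 0$ and $\|f\|_\infty \le 1$, we get
\[
\int \Big|\frac{1}{L}\sum_{\ell=1}^{L} f\circ T^{-\ell t_n}\Big|^2 d\mu
= \frac{1}{L^2}\sum_{\ell=1}^{L}\sum_{\ell'=1}^{L} \int (f\circ T^{-\ell t_n})(f\circ T^{-\ell' t_n})\,d\mu.
\]
Using measure-preservation, the $(\ell,\ell')$ term equals $\int f\cdot (f\circ T^{-(\ell-\ell')t_n})\,d\mu$, which is $\mu(B\cap T^{-(\ell-\ell')t_n}B) - \mu(B)^2$ when $\ell \ne \ell'$ and $\mathrm{Var}(\chi_B) = \mu(B)-\mu(B)^2$ when $\ell = \ell'$. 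The diagonal contributes $L\cdot(\mu(B)-\mu(B)^2)/L^2 \le 1/L$, which is small once $L$ is large, uniformly in $n$. For the off-diagonal terms, each fixed nonzero difference $d = \ell-\ell'$ with $1 \le |d| \le L-1$ gives a term of the form $\mu(B\cap T^{-d t_n}B) - \mu(B)^2 \to 0$ as $n \to \infty$, since $\{d t_n\}$ (equivalently $\{|d| t_n\}$, using that $T$ is invertible and hence $\{-m\}$ is mixing iff $\{m\}$ is) is mixing for the fixed nonzero integer $|d| \le L-1$. There are fewer than $L^2$ such pairs, so after dividing by $L^2$ and choosing $N$ large enough (depending on $L$), the total off-diagonal contribution is less than $\epsilon^2/2$; combined with $1/L < \epsilon^2/2$ this gives the $L^2$-bound $< \epsilon^2$ and hence the $L^1$-bound $< \epsilon$.

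So the recipe is: first choose $L$ so that $1/L < \epsilon^2/2$; then, with this finite $L$ fixed, use mixing of $\{d t_n\}$ for each of the finitely many $d$ with $1 \le |d| \le L-1$ to choose $N$ making each off-diagonal correlation smaller than $\epsilon^2/(2L^2)$ for $n \ge N$; then assemble via the expansion and Cauchy--Schwarz. The order matters: $L$ must be chosen before $N$, since $N$ depends on $L$ through the number of correlations that must be simultaneously controlled. I do not expect a serious obstacle here — the only mild subtlety is remembering to reduce to the $L^2$ norm before expanding (so that measure-preservation turns each cross term into a genuine correlation $\mu(B\cap T^{-d t_n}B)$), and noting that invertibility of $T$ lets us replace negative differences by positive ones so the hypothesis as stated (for $\ell\in\mathbb{N}$) suffices. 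The statement for general measurable $B$, rather than unions of levels, requires no extra work since the argument only used measure-preservation and $\int f = 0$.
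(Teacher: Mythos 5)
Your proposal is correct and follows essentially the same route as the paper's proof: reduce to the $L^2$ norm via Cauchy--Schwarz, expand the square so that measure-preservation turns each cross term into a correlation $\mu(T^{d t_n}B\cap B)-\mu(B)^2$ for a fixed difference $d$ with $|d|<L$, bound the diagonal by $1/L$, and choose $L$ with $1/L<\epsilon^2/2$ before choosing $N$ so that the finitely many off-diagonal correlations are small. The only cosmetic difference is that you request each correlation be below $\epsilon^2/(2L^2)$ while the paper uses the weights $(L-|d|)/L^2$ summing to at most $1$ and asks only for $\epsilon^2/2$ per term; both give the same conclusion.
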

\begin{proof}
Take $L > 2/\epsilon^{2}$ and $N$ so that $|\mu(T^{\ell t_{n}}(B) \cap B) - \mu(B)\mu(B)| < \epsilon^{2}/2$ for $\ell < L$ and $n > N$.  Then
\begin{align*}
\int \Big{|}\frac{1}{L} \sum \limits_{m=1}^{L} \chi_{B} \circ T^{-m t_n} - \mu(B)\Big{|}^{2} d\mu
&= \frac{1}{L^{2}}\sum \limits_{r,m=1}^{L} \mu(T^{(m-r)t_{n}}(B) \cap B) - \mu(B)\mu(B) \\
&\leq \frac{1}{L} + \frac{1}{L} \sum \limits_{\ell=1}^{L-1} \frac{L-\ell}{L} \mu(T^{\ell t_{n}}(B) \cap B) - \mu(B)\mu(B) < 2\epsilon^{2}/2 = \epsilon^{2}
\end{align*}
so, by Cauchy-Schwarz, $\int |\frac{1}{L} \sum \limits_{\ell=1}^{L} \chi_{B} \circ T^{-\ell t_n}-\mu(B)| d\mu \leq \sqrt{\epsilon^{2}} = \epsilon$.
\end{proof}

\begin{lemma}[Block Lemma \cite{adams1998smorodinsky}] \label{BlockLemma} For $T$ measure-preserving and $R,L,p \in \mathbb{N}$ with $pL \leq R$,\\
	$
	\int |\frac{1}{R} \sum \limits_{r=0}^{R-1} \chi \circ T^{-r}| d\mu \leq \int |\frac{1}{L} \sum \limits_{\ell=0}^{L-1} \chi \circ T^{-p\ell}| d\mu +\frac{pL}{R} \int |\chi| d\mu.
	$
\end{lemma}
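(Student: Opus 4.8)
The plan is to realize the long average $\frac1R\sum_{r=0}^{R-1}\chi\circ T^{-r}$ as an average over $T$-translates of the short average $\frac1L\sum_{\ell=0}^{L-1}\chi\circ T^{-p\ell}$, up to a remainder of relative size at most $pL/R$, and then conclude using the triangle inequality together with the $T$-invariance of $\mu$.

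First I would set $m=\lfloor R/(pL)\rfloor$, which is at least $1$ since $pL\le R$, and which satisfies $mpL\le R$ and $R-mpL<pL$. The combinatorial heart of the argument is that the integers $\{0,1,\dots,mpL-1\}$ are partitioned by the $mp$ arithmetic progressions $Q_{k,a}=\{kpL+a+jp:0\le j<L\}$, indexed by $0\le k<m$ and $0\le a<p$: for fixed $k$ the $p$ progressions $Q_{k,0},\dots,Q_{k,p-1}$ tile the block $\{kpL,\dots,(k+1)pL-1\}$, and the $m$ blocks together tile $\{0,\dots,mpL-1\}$. This gives
\[
\sum_{r=0}^{R-1}\chi\circ T^{-r}=\sum_{k=0}^{m-1}\sum_{a=0}^{p-1}\sum_{j=0}^{L-1}\chi\circ T^{-(kpL+a+jp)}+\sum_{r=mpL}^{R-1}\chi\circ T^{-r}.
\]

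Next, using the cocycle identity $\chi\circ T^{-(kpL+a+jp)}=\big(\chi\circ T^{-jp}\big)\circ T^{-(kpL+a)}$, the innermost sum over $j$ becomes $\big(\sum_{j=0}^{L-1}\chi\circ T^{-jp}\big)\circ T^{-(kpL+a)}$. Writing $\Phi=\frac1L\sum_{j=0}^{L-1}\chi\circ T^{-jp}$ and dividing the displayed identity by $R$,
\[
\frac1R\sum_{r=0}^{R-1}\chi\circ T^{-r}=\frac{L}{R}\sum_{k=0}^{m-1}\sum_{a=0}^{p-1}\Phi\circ T^{-(kpL+a)}+\frac1R\sum_{r=mpL}^{R-1}\chi\circ T^{-r}.
\]
Taking absolute values, integrating, applying the triangle inequality, and using $\int|\Phi\circ T^{-i}|\,d\mu=\int|\Phi|\,d\mu$ and $\int|\chi\circ T^{-r}|\,d\mu=\int|\chi|\,d\mu$ (both from $T$ preserving $\mu$), the first term contributes at most $\frac{Lmp}{R}\int|\Phi|\,d\mu\le\int|\Phi|\,d\mu$ because $mpL\le R$, and the second at most $\frac{R-mpL}{R}\int|\chi|\,d\mu<\frac{pL}{R}\int|\chi|\,d\mu$. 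Since $\int|\Phi|\,d\mu=\int|\frac1L\sum_{\ell=0}^{L-1}\chi\circ T^{-p\ell}|\,d\mu$, this is precisely the asserted inequality.

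I do not anticipate a genuine obstacle. Once the tiling of $\{0,\dots,mpL-1\}$ by length-$L$, common-difference-$p$ progressions is set up, everything else is the triangle inequality and measure-preservation of $T$. The only points requiring a moment of care are the bookkeeping ones: that $m\ge1$ so the decomposition is nonvacuous, and that the leftover index set $\{mpL,\dots,R-1\}$ has fewer than $pL$ elements so its contribution is absorbed into the $\frac{pL}{R}\int|\chi|\,d\mu$ term.
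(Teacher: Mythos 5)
Your proof is correct and follows essentially the same route as the paper's: both decompose $\{0,\dots,pL\lfloor R/(pL)\rfloor-1\}$ as $r=kpL+a+jp$ (the paper's indices $m,b,\ell$ are your $k,a,j$), pull out the short ergodic average via the triangle inequality and $T$-invariance of $\mu$, and absorb the fewer than $pL$ leftover terms into the error term. No substantive difference.
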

\begin{proof}
$0 \leq R - pL\lfloor \frac{R}{pL} \rfloor \leq \frac{pL}{r}$ so $\int |\frac{1}{R} \sum \limits_{r=0}^{R-1} \chi \circ T^{-r}| d\mu \leq \frac{pL}{R} + \int |\frac{1}{R} \sum \limits_{r=0}^{\lfloor \frac{R}{pL}\rfloor-1} \chi \circ T^{-r}| d\mu$ and
\begin{align*}
\int \Big{|}\frac{1}{R}\sum \limits_{r=0}^{pL\lfloor \frac{R}{pL} \rfloor -1} \chi \circ T^{-r}\Big{|} d\mu 
&= \frac{pL\lfloor \frac{R}{pL} \rfloor}{R} \int \Big{|}\frac{1}{\lfloor \frac{R}{pL} \rfloor}\sum \limits_{m=0}^{\lfloor \frac{R}{pL} \rfloor-1} \frac{1}{p}\sum \limits_{b=0}^{p-1} \frac{1}{L}\sum \limits_{\ell=0}^{L-1} \int \chi \circ T^{-p\ell} \circ T^{-b} \circ T^{-mpL}\Big{|} d\mu \\
&\leq  \frac{1}{\lfloor \frac{R}{pL} \rfloor}\sum \limits_{m=0}^{\lfloor \frac{R}{pL} \rfloor-1} \frac{1}{p}\sum \limits_{b=0}^{p-1} \int\Big{|} \frac{1}{L}\sum \limits_{\ell=0}^{L-1} \chi \circ T^{-p\ell}\circ T^{-b} \circ T^{-mpL}  \Big{|} d\mu \\
&= \frac{1}{\lfloor \frac{R}{pL} \rfloor}\sum \limits_{m=0}^{\lfloor \frac{R}{pL} \rfloor-1} \frac{1}{p}\sum \limits_{b=0}^{p-1} \int\Big{|} \frac{1}{L}\sum \limits_{\ell=0}^{L-1} \chi \circ T^{-p\ell}  \Big{|} d\mu
= \int \Big{|}\frac{1}{L}\sum \limits_{\ell=0}^{L-1} \chi \circ T^{-p\ell}  \Big{|} d\mu. \qedhere
\end{align*}
\end{proof}

\begin{proposition}\label{pe} 
Let $T$ be a rank-one transformation and $\{ c_{n} \}$ a sequence such that $\frac{c_{n}}{h_{n}} \to 0$.  If $\{ q(h_{n}+c_{n}) \}$ is rank-one uniform mixing for each fixed $q$ and $k_n \to \infty$ is such that $\frac{k_n}{n} \leq 1$ then
	\[
	\int \Big{|}\frac{1}{n} \sum \limits_{j=0}^{n-1} \chi \circ T^{-jk_n}\Big{|} d\mu \rightarrow 0.
	\]
	This condition is called power ergodic in \cite{CreutzSilva2004} and \cite{CreutzSilva2010}.
\end{proposition}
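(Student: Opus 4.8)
Throughout I would take $\chi = \chi_B - \mu(B)$ for a union of levels $B$ (the only case needed, and some mean-zero normalization is forced since $\int \frac1n\sum_{j<n}\chi\circ T^{-jk_n}\,d\mu = \int\chi\,d\mu$), and use that $T$ is weakly mixing, so every power $T^k$ is ergodic (Corollary \ref{C:Tkerg}). The plan is: the Block Lemma lets one replace the $n$-term average $\frac1n\sum_{j=0}^{n-1}\chi\circ T^{-jk_n}$ by a \emph{fixed}-length average $\frac1L\sum_{\ell=1}^{L}\chi\circ T^{-\ell t_n}$ along a spacing $t_n = p_n k_n$, up to an error $\frac{p_n L}{n}\int|\chi|\,d\mu$; so one should pick $p_n$ (with $p_n L \le n$) making $t_n = p_n k_n$ comparable to a tower scale $h_m + c_m$ with $m\to\infty$, and then control the fixed-length average through the hypothesis.

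Fix $\epsilon > 0$. I would first record a uniform-in-$t$ strengthening of Proposition \ref{P:CCC}: the estimate in the proof of Lemma \ref{between} bounding $\sum_{a<h_m}|\mu(T^{t}I_{m,a}\cap B)-\mu(I_{m,a})\mu(B)|$ for $q(h_m+c_m)\le t<(q+1)(h_m+c_m)$ depends on $t$ only through the integer $q$, so for each fixed $q$ it tends to $0$ as $m\to\infty$ uniformly over all such $t$; summing over the levels comprising $B$ gives $\sup\{|\mu(T^{t}B\cap B)-\mu(B)^2| : q(h_m+c_m)\le t<(q+1)(h_m+c_m)\}\to 0$. Since the proof of Lemma \ref{L:tnfixedl2} only uses the correlations $\mu(T^{\ell t}B\cap B)-\mu(B)^2$ for $\ell<L$ (and $\ell t$ then ranges over finitely many such windows), that proof runs uniformly and produces a \emph{fixed} $L=L(\epsilon)$ and an $N_0$ with $\int\big|\frac1L\sum_{\ell=1}^{L}\chi\circ T^{-\ell t}\big|\,d\mu<\epsilon/4$ whenever $m\ge N_0$ and $h_m+c_m\le t<2(h_m+c_m)$.

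Then, for $n$ large, let $m_n$ be the largest index with $h_{m_n}+c_{m_n}\le k_n$ and put $p_n = \lceil(h_{m_n+1}+c_{m_n+1})/k_n\rceil$, so that $h_{m_n+1}+c_{m_n+1}\le p_n k_n < 2(h_{m_n+1}+c_{m_n+1})$ (using $k_n < h_{m_n+1}+c_{m_n+1}$ by maximality) and $m_n\to\infty$ since $k_n\to\infty$. Because $h_{m_n+1}+c_{m_n+1}$ exceeds $h_{m_n}+c_{m_n}\le k_n$ by a factor of order $r_{m_n}$, and $r_m = o(h_m)$, one gets $p_n = O(r_{m_n}) = o(h_{m_n}) = o(k_n) = o(n)$, so $p_n L \le n$ for $n$ large. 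Now apply the Block Lemma to the measure-preserving transformation $T^{k_n}$ with $R = n$, $p = p_n$, $L = L$:
\[
\int\Big|\frac1n\sum_{j=0}^{n-1}\chi\circ T^{-jk_n}\Big|\,d\mu \le \int\Big|\frac1L\sum_{\ell=0}^{L-1}\chi\circ T^{-\ell p_n k_n}\Big|\,d\mu + \frac{p_n L}{n}\int|\chi|\,d\mu .
\]
For $n$ large the error term is $\le\epsilon$, and the first term, after shifting $\sum_{\ell=0}^{L-1}$ to $\sum_{\ell=1}^{L}$ (cost $\le\frac2L\int|\chi|\,d\mu<\epsilon/4$ once $L$ is large), is $<\epsilon/2$ by the second paragraph applied with $t = p_n k_n$ and $m = m_n+1\ge N_0$. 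Hence the left side is $<2\epsilon$ for all large $n$; since $\epsilon$ was arbitrary, the limit is $0$.

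The step I expect to be the main obstacle is the size estimate on $p_n$: one must raise $p_n k_n$ to the next tower scale $h_{m_n+1}+c_{m_n+1}$ while keeping $p_n L\le n$, which forces control of the multiplicative gap $(h_{m_n+1}+c_{m_n+1})/(h_{m_n}+c_{m_n})\approx r_{m_n}$ at the (necessarily small) index $m_n$ with $h_{m_n}+c_{m_n}$ comparable to $k_n\le n$. This is where one uses that $r_m=o(h_m)$ (which holds in every setting where the proposition is applied); the mechanism making it work is simply that $m_n\to\infty$, so that for fixed $T$ and large $n$ the value $r_{m_n}$ is small compared with $n$.
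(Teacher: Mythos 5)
Your proposal is correct and follows essentially the same route as the paper: choose $p_n$ minimal with $t_n = p_nk_n$ at least the next tower scale $h_{m+1}+c_{m+1}$ (so $t_n < 2(h_{m+1}+c_{m+1})$), apply the Block Lemma with a fixed $L$ coming from the correlation estimate of Lemma \ref{L:tnfixedl2}, and kill the error term via $p_n = O(r_m) = o(n)$ using $r_m/h_m \to 0$ and $h_m + c_m \le k_n \le n$. The only cosmetic difference is your uniform-in-$t$ detour; the paper avoids it by applying Lemma \ref{L:tnfixedl2} directly to the single sequence $\{t_n\}$, whose multiples $\{\ell t_n\}$ are mixing by Proposition \ref{P:CCC}.
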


\begin{proof} For each $n$ there exists a unique $m$ such that $h_m+c_m \leq k_n < h_{m+1}+c_{m+1}$. Let $p_n$ be the smallest integer such that $p_nk_n \geq h_{m+1}+c_{m+1}$.
	Suppose $p_nk_n > 2(h_{m+1}+c_{m+1})$. Then $(\frac{p_n}{2})k_n > h_{m+1}+c_{m+1}$. If $p_n$ is even, $p_n > \frac{p_n}{2}$, which contradicts that $p_n$ is the smallest integer such that $p_nk_n\geq h_{m+1}+c_{m+1}$. If $p_n$ is odd, $p_n \geq \frac{p_n+1}{2}$, which contradicts that $p_n$ is smallest such that $p_nk_n\geq h_{m+1}+c_{m+1}$. In the case when $p_n=1$, then $k_n \geq 2(h_{m+1}+c_{m+1})$ with $k_n =h_{m+1}+c_{m+1}$, contradicting that $k_{n} < h_{m+1} + c_{m+1}$. So $p_nk_n < 2(h_{m+1}+c_{m+1})$.
	Set $t_n=p_nk_n$. Then $h_{m+1}+c_{m+1} \leq t_n < 2(h_{m+1}+c_{m+1})$.  For each fixed $\ell$ then $(h_{m} + c_{m}) \leq \ell t_{n} < 2\ell (h_{m} + c_{m})$ so $\{ \ell t_{n} \}$ is mixing by Proposition \ref{P:CCC}.
	
	Fix $\epsilon > 0$.  By Lemma \ref{L:tnfixedl2}, there exists $L$ and $N$ such that for $n > N$,
	$
	\int |\frac{1}{L} \sum \limits_{\ell=1}^{L} \chi \circ T^{-\ell t_n}| d\mu < \epsilon.
	$
	By Lemma \ref{BlockLemma},
	\begin{align}
	\int \Big{|}\frac{1}{n} \sum \limits_{j=0}^{n-1} \chi \circ T^{-jk_n}\Big{|} d\mu &\leq \int \Big{|}\frac{1}{L} \sum \limits_{\ell=0}^{L-1} \chi \circ T^{-\ell p_{n}k_n}\Big{|} d\mu + \frac{p_n L}{n} \nonumber 
	= \int \Big{|}\frac{1}{L} \sum \limits_{\ell=0}^{L-1} \chi \circ T^{-\ell t_n}\Big{|} d\mu + \frac{p_n L}{n} \nonumber 
	< \epsilon +\frac{p_n L}{n}, \nonumber 
	\end{align}	
	
	Since $\frac{k_{n}}{n} \leq 1$ gives $\frac{r_{m}}{n} = \frac{r_{m}k_{n}}{nk_{n}} \leq \frac{r_{m}}{k_{n}} \leq \frac{r_{m}}{h_{m}} \to 0$,
	\[
	\frac{p_{n}}{n} = \frac{p_{n}k_{n}}{nk_{n}} \leq \frac{2(h_{m+1}+c_{m+1})}{n(h_{m}+c_{m})} \leq \frac{4}{n} \frac{(r_{m}+1)(h_{m} + c_{m} + r_{m})}{(h_{m}+c_{m})}
	= \frac{4r_{m}}{n} \Big{(}1 + \frac{r_{m}}{h_{m}+c_{m}}\Big{)} \to 0
	\]
	so
	$
	\limsup_{n} \int |\frac{1}{n} \sum \limits_{j=0}^{n-1} \chi \circ T^{-jk_n}| d\mu \leq \epsilon.
	$
	As this holds for all $\epsilon > 0$, $\int |\frac{1}{n} \sum \limits_{j=0}^{n-1} \chi \circ T^{-jk_n}| d\mu \to 0$.
\end{proof}

\begin{theorem}\label{mixingES} Let $T$ be an elevated staircase transformation with height sequence $\{h_n\}$ such that $\frac{r_n^2}{h_n}\rightarrow 0$. Let $\{t_n\}$ be a sequence such that $(h_n+c_n) \leq t_n < (h_{n+1}+c_{n+1})$. Then $\{t_n\}$ is mixing. \end{theorem}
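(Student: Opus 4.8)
The plan is to follow the scheme of \cite{CreutzSilva2004}, verifying mixing through subsequences and using Proposition \ref{UniformToMixing} (a rank-one uniform mixing sequence is mixing). Given a subsequence of $\{t_n\}$, set $q_n=\lfloor t_n/(h_n+c_n)\rfloor$, so $q_n(h_n+c_n)\le t_n<(q_n+1)(h_n+c_n)$ with $q_n\ge 1$. From Proposition \ref{P:hr} one has $h_{n+1}=(1+o(1))(r_n+1)h_n$ since $\frac{c_n}{h_n}\to0$ and $\frac{r_n^2}{h_n}\to0$, and together with $\frac{c_{n+1}}{h_{n+1}}\to0$ this gives $q_n\le(1+o(1))(r_n+1)$. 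Pass to a further subsequence on which either (a) $q_n$ is bounded or (b) $q_n\to\infty$. In case (a), say $q_n\le Q$, we have $h_n+c_n\le t_n<(Q+1)(h_n+c_n)$; since $\{q(h_n+c_n)\}$ is rank-one uniform mixing for every fixed $q$ (Theorem \ref{kheightUniMixElevated} with Corollary \ref{C:Tkerg}) and $\frac{c_n}{h_n}\to0$, Proposition \ref{P:CCC} shows $\{t_n\}$ is mixing.

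In case (b) write $t_n=q_n(h_n+c_n)+b_n$ with $0\le b_n<h_n+c_n$, fix a union of levels $B$ in a column $C_N$, and for $n\ge N$ estimate $\Sigma_n:=\sum_{a=0}^{h_n-1}|\mu(T^{t_n}I_{n,a}\cap B)-\mu(I_{n,a})\mu(B)|$, which suffices. Decompose $I_{n,a}=\bigsqcup_{i=0}^{r_n}I^{[i]}_{n,a}$ and use that subcolumn $i$ of $C_n$ sits in $C_{n+1}$ with base level $\ell_i=i(h_n+c_n)+\binom{i}{2}$, so $I^{[i]}_{n,a}=T^{\ell_i+a}(I_{n+1,0})$ and hence $T^{t_n}(I^{[i]}_{n,a})=T^{t_n+\ell_i+a}(I_{n+1,0})$. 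For most pairs $(a,i)$ this is a single level of $C_{n+1}$ lying in subcolumn $i+q_n$ (or $i+q_n+1$) of $C_n$; applying Lemma \ref{L:subtofull} rewrites the summand as $\frac{1}{r_n+1}\bigl(\mu(T^{\rho_{n,i}}I_{n,a}\cap B)-\mu(I_{n,a})\mu(B)\bigr)$ for a shift $\rho_{n,i}$ that agrees, up to an $i$-independent constant, with $-q_n i$. The exceptional pairs — where $t_n+\ell_i+a$ lands in a spacer block of $C_{n+1}$ (hence meets no level of $C_N$, as stage-$n$ spacers lie outside $C_n\supseteq C_N$) or rises above $C_{n+1}$ — contribute, after the $\frac{1}{r_n+1}$ normalization, at most a constant times $\frac{c_n+r_n}{h_n}+\frac{r_n^2}{h_n}+\frac{q_n}{r_n+1}$, using that each of the $r_n$ spacer blocks of $C_{n+1}$ has width $c_n+j\le c_n+r_n\ll h_n$. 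Absorbing the $i$-independent shift by measure-invariance of $T$, we obtain $\Sigma_n\le\int\bigl|\frac{1}{r_n+1}\sum_{i=0}^{r_n}\chi_B\circ T^{-q_n i}-\mu(B)\bigr|\,d\mu+\epsilon_n$ with $\epsilon_n\to0$ as soon as $q_n=o(r_n)$.

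The integral then tends to $0$: since $q_n\to\infty$, $\{\ell q_n\}$ is mixing for each fixed $\ell$ by Proposition \ref{P:CCC}, so Lemma \ref{L:tnfixedl2} and the Block Lemma \ref{BlockLemma} applied to $T^{q_n}$ (with $p=1$) give $\int\bigl|\frac{1}{r_n+1}\sum_{i=0}^{r_n}\chi_B\circ T^{-q_n i}-\mu(B)\bigr|\,d\mu\le\int\bigl|\frac1L\sum_{\ell=0}^{L-1}\chi_B\circ T^{-q_n\ell}-\mu(B)\bigr|\,d\mu+\frac{L}{r_n+1}$, which is made small by choosing $L$ then $n$ large; this is the power-ergodicity of Proposition \ref{pe}. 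Thus $\Sigma_n\to0$ whenever $q_n=o(r_n)$. The remaining possibility, that $\frac{q_n}{r_n+1}$ stays bounded away from $0$, means $t_n$ is comparable to $h_{n+1}$; this sub-case is treated by the analogous argument carried out relative to the column $C_{n+1}$ (bracketing $t_n$ between multiples of $h_{n+1}+c_{n+1}$ or, when $t_n<h_{n+1}+c_{n+1}$, relating $T^{t_n}$ to $T^{h_{n+1}+c_{n+1}}$ on levels of $C_{n+1}$ and again invoking Lemma \ref{L:subtofull} and Proposition \ref{pe}), with all boundary terms controlled exactly as above. The main obstacle throughout is this bookkeeping of boundary terms — tracking which sublevels are pushed into the thin spacer blocks or above the working column — and it is precisely here that $\sum\frac{c_n+r_n}{h_n}<\infty$ and $\frac{r_n^2}{h_n}\to0$ are used; everything else is an adaptation of the traditional-staircase argument.
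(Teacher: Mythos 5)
Your strategy in the regimes where it works is essentially the paper's: reduce the bounded-multiple case to Proposition \ref{P:CCC}, and for $q_n\to\infty$ decompose levels into sublevels, use the staircase identity $T^{q_n(h_n+c_n)}(I^{[i]}_{n,a})=T^{-q_ni-\frac{1}{2}q_n(q_n-1)}(I^{[i+q_n]}_{n,a})$ with Lemma \ref{L:subtofull} to produce an ergodic average along $\{q_n i\}$, and kill it with power ergodicity. One secondary flaw: the claim that $\{\ell q_n\}$ is mixing for each fixed $\ell$ ``by Proposition \ref{P:CCC}'' is unjustified, since $\ell q_n\le \ell(r_n+1)\ll h_n+c_n$ lies far below the column-$n$ scale and its multiplier relative to the appropriate earlier column is unbounded; this is precisely why Proposition \ref{pe} introduces the auxiliary multiple $p_nk_n$ placed just above $h_{m+1}+c_{m+1}$ and applies the Block Lemma with $p=p_n$ rather than $p=1$. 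Since you ultimately cite Proposition \ref{pe} itself, this is repairable.

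The genuine gap is the regime $q_n\asymp r_n$, which does occur because $t_n$ may be as large as $h_{n+1}+c_{n+1}\approx(r_n+1)(h_n+c_n)$. You relegate the sublevels with $i+q_n>r_n$ to an error term of relative size $\frac{q_n}{r_n+1}$, which does not vanish there, and your proposed remedy --- rerunning the argument relative to $C_{n+1}$ --- cannot work: by hypothesis $t_n<h_{n+1}+c_{n+1}$, so relative to $C_{n+1}$ you are in the sub-height case $q=0$ that Lemma \ref{between} and Proposition \ref{P:CCC} explicitly exclude, and for $t_n<h_{n+1}$ the map $T^{t_n}$ sends most levels of $C_{n+1}$ to single levels of $C_{n+1}$, so the uniform-mixing sum over $C_{n+1}$ contains order-one terms and no averaging is available at that scale. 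The paper instead treats these wrapped sublevels as a second main term: for $i\ge r_n-k_n+2$ it computes $T^{k_n(h_n+c_n)}(I^{[i]}_{n,b})=T^{h_{n+1}}(I_{n+1,\,b+h_n+2c_n-\frac{1}{2}r_n(r_n-1)})$, so their total contribution is dominated by $\sum_{y}|\mu(T^{h_{n+1}}(I_{n+1,y})\cap B)-\mu(I_{n+1,y})\mu(B)|$, which tends to zero because $\{h_{n+1}+c_{n+1}\}$ is rank-one uniform mixing (Theorem \ref{kheightUniMixElevated} with $k=1$) and the discrepancy $c_{n+1}\mu(I_{n+1})\to0$. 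That identification of the wrapped sublevels with the exact shift $h_{n+1}$ acting on $C_{n+1}$ is the missing ingredient in your argument, and without it the case $t_n\asymp h_{n+1}$ is not covered.
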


\begin{proof} 

By Corollary \ref{C:Tkerg}, $T^{k}$ is ergodic for each fixed $k$.  Then by Theorem \ref{kheightUniMixElevated}, the sequence $\{ k(h_{n} + c_{n}) \}$ is rank-one uniform mixing for each fixed $k$.  By Proposition \ref{P:CCC}, if there exists a  constant $k$ such that $(h_n+c_n) \leq t_n < k(h_n+c_n)$, then $\{t_n\}$ is mixing, so writing $t_{n} = k_{n}(h_{n}+c_{n}) + z_{n}$ for $0 \leq z_{n} < h_{n} + c_{n}$ we may assume $k_n \to \infty$.
	
For $0 \leq a < h_n-z_n$, we have $T^{t_n}(I_{n,a})=T^{k_n(h_n+c_n)}(I_{n,a+z_n})$ and for $h_n+c_n-z_n \leq a < h_n$, 
\[T^{t_n}(I_{n,a})=T^{t_n+a}(I_{n,0})=T^{k_n(h_n+c_n)+z_n+a}(I_{n,0})=T^{(k_n+1)(h_n+c_n)}(I_{n,a+z_n-h_n-c_n}).\]

For a union of levels $B$ in $C_N$ and $n \geq N$,
	\begin{align}
	&\sum \limits_{a=0}^{h_n-1} |\mu(T^{t_n}(I_{n,a}) \cap B) - \mu(I_{n,a})\mu(B)|  \nonumber \\
	&\leq \sum \limits_{a=0}^{h_n-z_{n}-1} |\mu(T^{t_n}(I_{n,a}) \cap B) - \mu(I_{n,a})\mu(B)| + c_{n}\mu(I_{n}) + \sum \limits_{a=h_{n}+c_{n}+z_{n}}^{h_n-1} |\mu(T^{t_n}(I_{n,a}) \cap B) - \mu(I_{n,a})\mu(B)| \nonumber \\
	&\leq \sum \limits_{b=0}^{h_n-1} |\mu(T^{k_n(h_n+c_n)}(I_{n,b}) \cap B) - \mu(I_{n,b})\mu(B)| + c_{n}\mu(I_{n}) \tag{$\star$} \nonumber  \\
	&\quad\quad\quad\quad +\sum \limits_{b=0}^{h_n-1} |\mu(T^{(k_n+1)(h_n+c_n)}(I_{n,b}) \cap B) - \mu(I_{n,b})\mu(B)|.  \tag{$\star\star$} \nonumber 
	\end{align}
	
	We show that sum $(\star)$ tends to zero:
	\begin{align}
	\sum \limits_{b=0}^{h_n-1} |\mu(&T^{k_n(h_n+c_n)}(I_{n,b}) \cap B) - \mu(I_{n,b})\mu(B)| \nonumber 
	\leq \sum \limits_{b=0}^{h_n-1} |\sum \limits_{i=0}^{r_{n}-k_n}\mu(T^{k_n(h_n+c_n)}(I^{[i]}_{n,b}) \cap B) - \mu(I^{[i]}_{n,b})\mu(B)|  \nonumber \tag{$\dagger$} \\ &\quad\quad\quad\quad + \frac{2}{r_{n}} + \sum \limits_{b=0}^{h_n-1} |\sum \limits_{i=r_{n}-k_n+2}^{r_{n}}\mu(T^{k_n(h_n+c_n)}(I^{[i]}_{n,b}) \cap B) - \mu(I^{[i]}_{n,b})\mu(B)|. \nonumber \tag{$\ddagger$}
	\end{align}
For the sum $(\dagger)$,
	\begin{align}
	\sum \limits_{b=0}^{h_n-1} |\sum \limits_{i=0}^{r_{n}-k_n}&\mu(T^{k_n(h_n+c_n)}(I^{[i]}_{n,b}) \cap B) - \mu(I^{[i]}_{n,b})\mu(B)|  \nonumber
	\leq \left(r_nk_n+ \frac{1}{2}k_n(k_n-1)\right)\mu(I_n) \nonumber \\
	&\quad \quad \quad \quad + \sum \limits_{b=r_nk_n + \frac{1}{2}k_n(k_n-1)}^{h_n-1} |\sum \limits_{i=0}^{r_{n}-k_n}\mu(T^{k_n(h_n+c_n)}(I^{[i]}_{n,b}) \cap B) - \mu(I^{[i]}_{n,b})\mu(B)|, \nonumber 
	\end{align}	
	and, by Lemma \ref{L:subtofull},
	\begin{align}
	\sum \limits_{b=r_nk_n + \frac{1}{2}k_n(k_n-1)}^{h_n-1} &|\sum \limits_{i=0}^{r_{n}-k_n}\mu(T^{k_{n}(h_{n}+c_{n})}(I^{[i]}_{n,b}) \cap B) - \mu(I^{[i]}_{n,b})\mu(B)| \nonumber \\
	&= \sum \limits_{b=r_nk_n + \frac{1}{2}k_n(k_n-1)}^{h_n-1} |\frac{1}{r_n+1}\sum \limits_{i=0}^{r_{n}-k_n}\mu(T^{-ik_{n} + \frac{1}{2}k_{n}(k_{n}-1)}(I_{n,b}) \cap B) - \mu(I_{n,b})\mu(B)| \nonumber \\
	&\leq \int  \Big{|} \frac{1}{r_n+1} \sum \limits_{i=0}^{r_{n}-k_n} \chi_{B} \circ T^{-k_ni-\frac{1}{2}k_n(k_n-1)}-\mu(B)\Big{|}d\mu \nonumber \to 0
	\end{align}
	by Proposition \ref{pe} as $k_{n} \leq r_{n} + 1$.  	Since $k_{n} \leq r_{n}$, $r_{n}k_{n} + \frac{1}{2}k_{n}(k_{n}-1) \leq 2r_{n}^{2}$ and since $\frac{r_{n}^{2}}{h_{n}} \to 0$ by assumption, $(r_{n}k_{n} + \frac{1}{2}k_{n}(k_{n}-1))\mu(I_{n}) \to 0$. So sum $(\dagger)$ tends to zero.
	
	For the sum $(\ddagger)$: for $r_{n} - k_{n} + 2 \leq i < r_{n} + 1$ and $k_{n} \leq r_{n}$, since $\frac{r_{n}^{2}}{h_{n}} \to 0$ we have $k_{n}(h_{n}+c_{n}) + i(h_{n}+c_{n}) \geq (r_{n}+2)(h_{n}+c_{n}) = h_{n+1} + h_{n} + 2c_{n} - \frac{1}{2}r_{n}(r_{n}-1) \geq h_{n+1}$ so
	\begin{align*}
	T^{k_{n}(h_{n}+c_{n})}(I_{n,b}^{[i]}) &= T^{k_{n}(h_{n}+c_{n})}(I_{n+1,b + i(h_{n}+c_{n})+\frac{1}{2}i(i-1)}) \\
	&= T^{k_{n}(h_{n}+c_{n})+ i(h_{n}+c_{n})+\frac{1}{2}i(i-1)}(I_{n+1,b})
	= T^{h_{n+1}}(I_{n+1,b+h_{n}+2c_{n}-\frac{1}{2}r_{n}(r_{n}-1)}).
	\end{align*}
	Therefore, the sum $(\ddagger)$ satisfies
	\[
	\sum_{b=0}^{h_{n}-1} |\sum_{i=r_{n}-k_{n}+2}^{r_{n}} \mu(T^{k_n(h_n+c_n)}(I_{n,b}^{[i]}) \cap B) - \mu(I_{n,b}^{[i]})\mu(B)| 
	\leq \sum_{y=0}^{h_{n+1}-1} |\mu(T^{h_{n+1}}(I_{n+1,y}) \cap B) - \mu(I_{n+1,y})\mu(B)|
	\]
	which tends to zero as $\{h_{n}\}$ is rank-one uniform mixing.
	
	Since $(\dagger)$ and $(\ddagger)$ tend to $0$, we have that $(\star)$ tends to zero.  The same argument with $k_{n} + 1$ in place of $k_{n}$ shows that $(\star\star)$ tends to zero.  As $c_{n}\mu(I_{n}) \leq \frac{c_{n}}{h_{n}} \to 0$, this shows $\{ t_{n} \}$ is rank-one uniform mixing.
 \end{proof}

\begin{proof}[Proof of Theorem \ref{T:realmixing}]
	By Proposition \ref{P:finmeasest}, $T$ is on a finite measure space.
	Let $\{t_m\}$ be any sequence. Set $p_m$ such that $h_{p_m}+c_{p_m} \leq t_m < h_{p_m+1}+c_{p_m+1}$. Choose a subsequence $\{t_{m_j}\}$ of $\{t_m\}$ such that $p_{m_j}$ is strictly increasing. Then $\exists$ $\{q_n\}$ with $h_n+c_n \leq q < h_{n+1}+c_{n+1}$ such that $\{t_{m_j}\}$ is a subsequence of $\{q_n\}$ (take $\{q_n\}=\{t_{m_j}\} \cup \{h_n+c_n | \; n \: \text{ s.t.} \; \forall j, \: p_{m_j} \neq n \}$). Theorem \ref{mixingES} gives $\{q_n\}$ is mixing so $\{t_{m_j}\}$ is. As every $\{t_m\}$ has a mixing subsequence, $T$ is mixing.
\end{proof}

\textbf{Acknowledgements} We wish to thank the referee for helpful suggestions regarding terminology and organization of the results.

\dbibliography{Complexity}

\end{document}